\definecolor{darkgreen}{rgb}{0,0.55,0}
\newtheorem{thm}{Theorem}
\newtheorem{lem}{Lemma}
\newtheorem{cor}{Corollary}
\newtheorem{prop}{Proposition}
\newtheorem{rem}{Remark}
\newcommand{\eps}{\varepsilon}
\newcommand{\ep}{\epsilon}
\newcommand{\R}{\mathbb{R}}
\newcommand{\C}{\mathbb{C}}
\newcommand{\boE}{\mathcal{E}}
\newcommand{\Ee}{{E}_{\varepsilon,\eta}}
\newcommand{\logeps}{|\!\log\eps|}
\newcommand{\rhotf}{\rho_{\scriptscriptstyle{TF}}}
\newcommand{\Otf}{\Omega_{\scriptscriptstyle{TF}}}
\newcommand{\TF}{\scriptscriptstyle{TF}}
\def\rest{\hskip 1pt{\hbox to 10.8pt{\hfill
\vrule height 7pt width 0.4pt depth 0pt\hbox{\vrule height 0.4pt
width 7.6pt depth 0pt}\hfill}}}
\def\evalu{\hskip 1pt{\hbox to 2pt{\hfill \vrule height -6pt width 0.4pt depth
0pt}}}
\def\barint{\mathop{\vrule width 6pt height 3 pt depth -2.5pt \kern -8.8pt
\intop}}
\title{Vortex dynamics for the  two dimensional non homogeneous Gross-Pitaevskii
equation}
\author{{\sc Robert  L. Jerrard}  \& {\sc Didier Smets} }
\date{}
\begin{document}

\maketitle
\begin{abstract}
We derive the asymptotical dynamical law for Ginzburg-Landau
vortices in an inhomogeneous background density under the Schr\"odinger
dynamics, when the Ginzburg-Landau parameter goes to zero. New ingredients
involve across the cores lower bound estimates and approximations. 
\end{abstract}
%%%%%%%%%%%%%%%%%%%%%%%%%%%%%%%%%%%%%%%%%%%%%%%%%%%%%%%%%%%%%%%%%
%%%%
\section{Introduction}                                          %
%%%%%%%%%%%%%%%%%%%%%%%%%%%%%%%%%%%%%%%%%%%%%%%%%%%%%%%%%%%%%%%%%
%%%%

We are interested in the two dimensional Gross-Pitaevskii equation
\renewcommand{\theequation}{GP}
\begin{equation}\label{eq:gpv}
i \partial_t u - \Delta u + \frac{1}{\eps^2}\left( V(x) + |u|^2\right) u  = 0
\end{equation}
for $u:\R^2\times \R^+ \to \C$, where $0<\eps\ll1$ and $V : \R^2 \to \R^+$ is a smooth potential
such that
\setcounter{equation}{0}
\renewcommand{\theequation}{\arabic{equation}}
$$
V(x) \to +\infty \text{ as } |x|\to +\infty. 
$$ 
The Gross-Pitaevskii equation is a widely used model to describe the dynamics of a Bose-Einstein 
condensate in a trapping potential $V$. The equation on $\R^2$ arises
via dimension reduction from 3 dimensions; this has been justified for particular choices of $V$
in \cite{AMSW} for example.
%A solution $u$ encodes various physical attributes of a condensate; for example, $|u|^2$ corresponds to the local density.

Equation \eqref{eq:gpv} is Hamiltonian, with Hamiltonian given by 
$$
\mathcal{E}_{\eps,V}(u) = \int_{\R^2} \frac{|\nabla u|^2}{2} + \frac{1}{\eps^2}\left(
V(x)\frac{|u|^2}{2} + \frac{|u|^4}{4}\right).
$$ 
Another quantity which is preserved by the flow associated with \eqref{eq:gpv}
is the total mass $M$, given by
$$
 M(u) = \int_{\R^2} |u|^2.
$$ 
For each $m>0$, there exists\footnote{We refer to Section \ref{sect:groundstate} for the details 
on a number of statements regarding the ground states which we state without justification in this introduction.}    
at least one positive ground state $\eta \equiv \eta_{\eps,m}:\ \R^2\to \R^+$ of
total mass equal to $m$.  By definition, a ground state $\eta$ realizes the infimum
$$
\mathcal{E}_{\eps,V}(\eta) = \inf\{ \mathcal{E}_{\eps,V}(g), \ g \in H^1(\R^2,\C),\ M(g) = m\},
$$
and satisfies the Euler-Lagrange equation
$$
-\Delta \eta + \frac{1}{\eps^2}(V+\eta^2) \eta = \frac 1{\eps^2}\lambda \eta,
$$ 
where we write the Lagrange multiplier as $\frac{1}{\eps^2}\lambda$ for some $\lambda\equiv \lambda_{\eps,m}.$ 

In the limit $\eps \to 0,$ we have 
$$
\eta^2 \to \rhotf \qquad\text{in } L^2(\R^2),
$$
where the function $\rhotf$, known as the {\it Thomas-Fermi profile} in the physics literature, is given by
$\rhotf(x) := (\lambda_0-V)^+(x)$ where the number $\lambda_0$  is uniquely determined by the mass condition
$$
\int_{\R^2}(\lambda_0-V(x))^+\, dx = m.
$$
We will study the behaviour of solutions of \eqref{eq:gpv}
which correspond, in a sense to be made precise later, to perturbations of the ground state $\eta$ 
by a finite number of quantized vortices, each carrying a single quantum of vorticity. Our goal 
is to prove that these vortices persist, and to describe their evolution in time.

We will show  that to leading order the vortices do not
interact, and that each one evolves (in a renormalized time scale) by the
orthogonal gradient flow for the function $\log \rhotf$,
with a sign depending on the winding number of the given vortex. More precisely, 
let 
$$\Otf := \{ x: \rhotf(x)>0\}$$
be the interior of the limiting support\footnote{Note that we do {\em not} assume that $\Otf$ is simply connected or that
its boundary is smooth.} of the ground state, let $\{b_i^0\}_{i=1}^l$ be distinct points in  $\Otf$, and
let $d_1,\ldots, d_l \in \{-1,+1\}.$ For each $i\in \{1,\cdots,l\},$ we denote
by $b_i(t)$ the solution of the ordinary differential equation
\begin{equation}\label{eq:limitdynamics0}
	\dot b_i (t) = d_i \frac{\nabla^\perp\rhotf}{\rhotf}(b_i(t)),
\end{equation}
where $\nabla^\perp = (-\partial_{x_2}, \partial_{x_1})$,
with initial datum $b_i(0) = b_i^0.$
\begin{thm}\label{thm:limit}
	Let $(u_\eps^0)_{\eps >0}$ be a family of initial data for \eqref{eq:gpv} such that
	$$
	M(u_\eps^0) = m,
	$$
	$$
	\mathcal{E}_{\eps,V}(u_\eps^0) \le \mathcal{E}_{\eps,V}(\eta) + \pi \sum_{i=1}^l \rhotf(b_i^0)\logeps +  o(\logeps),
	$$
	and
	$$
	{\rm curl} \big( \frac{j(u_\eps^0)}{\rhotf}\big) \longrightarrow 2\pi \sum_{i=1}^l d_i \delta_{b_i^0} \qquad \text{in } W^{-1,1}_{\rm loc}(\Otf),
	$$
as $\eps \to 0.$ Then, as long as the points $\{b_i(t)\}_{i=1}^l$ remain distinct,
	$$
	{\rm curl} \big( \frac{j(u_\eps^t)}{\rhotf}\big) \longrightarrow 2\pi \sum_{i=1}^l d_i \delta_{b_i(t)} \qquad \text{in } W^{-1,1}_{\rm loc}(\Otf),
	$$
	as $\eps \to 0,$ where $u_\eps^t:= u_\eps(\cdot,t\logeps).$ 
\end{thm}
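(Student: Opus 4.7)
The approach is a modulated-energy plus Jacobian-evolution strategy, adapted from the homogeneous Gross--Pitaevskii case (Colliander--Jerrard, Lin--Xin, Jerrard--Spirn) to the inhomogeneous setting weighted by $\rhotf$. First I would write $u_\eps = \eta w_\eps$, so that $\eta$ carries the Thomas--Fermi density and $w_\eps$ carries the vortex structure. Using the Euler--Lagrange equation for $\eta$ and the constraint $M(u_\eps)=M(\eta)=m$, a short computation rewrites the Hamiltonian as
$$
\boE_{\eps,V}(u_\eps) - \boE_{\eps,V}(\eta)\ =\ \frac12\int_{\R^2}\eta^2|\nabla w_\eps|^2 + \frac{\eta^4}{2\eps^2}(1-|w_\eps|^2)^2\ +\ \text{remainder},
$$
where the remainder is $o(\logeps)$ thanks to the hypothesized energy bound. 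Conservation of $\boE_{\eps,V}$ along the flow therefore freezes this \emph{weighted} Ginzburg--Landau energy of $w_\eps$ for all times, which is the main dynamical input.

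Second, I would match the hypothesized upper bound by proving an across-the-cores weighted lower bound: if the Jacobian of $w_\eps$ concentrates at points $a_1,\dots,a_l\in\Otf$ of degrees $d_i=\pm 1$, then
$$
\frac12\int \eta^2|\nabla w_\eps|^2 + \frac{\eta^4}{2\eps^2}(1-|w_\eps|^2)^2\ \geq\ \pi\sum_{i=1}^l \rhotf(a_i)\logeps\ +\ o(\logeps).
$$
The natural route is a vortex-ball construction on scales $\eps\ll r\ll 1$: on each dyadic annulus around $a_i$ one Taylor-expands $\eta^2\approx \rhotf(a_i)+\nabla\rhotf(a_i)\cdot(x-a_i)+O(|x-a_i|^2)$, and the linear correction is killed by the angular mean-zero property of the vortex profile while the quadratic correction produces an $o(\logeps)$ error. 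This is the principal technical obstacle, because $\rhotf$ varies by $O(\eps)$ across a core, and what is needed is not just a sharp constant but an error strictly smaller than $\logeps$, uniformly in time and in admissible configurations. Combined with the conserved upper bound, this lower bound forces the weighted Jacobian of $u_\eps^t$ to remain concentrated at exactly $l$ unit-degree points with $o(\logeps)$ excess energy throughout the lifespan of the ODE.

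Third, I would identify the motion law by differentiating the weighted Jacobian $\omega_\eps := \mathrm{curl}(j(u_\eps)/\rhotf)$. Testing against $\varphi\in C^\infty_c(\Otf)$ and using \eqref{eq:gpv}, one obtains, after integration by parts against the stress-energy tensor of $u_\eps$, an identity of the form
$$
\frac{d}{dt}\langle\omega_\eps,\varphi\rangle\ =\ \int_{\R^2} (\nabla\log\rhotf \cdot \nabla^\perp\varphi)\,e_\eps(u_\eps)\ +\ \text{l.o.t.},
$$
where $e_\eps(u_\eps)$ is the weighted energy density. The concentration $e_\eps(u_\eps)\,dx \rightharpoonup \pi\sum_i \rhotf(b_i^\eps(t))\,\logeps\,\delta_{b_i^\eps(t)}$ from the previous step, together with the time rescaling $t\mapsto t\logeps$, converts this identity into a discrete system whose limit is precisely $\dot b_i = d_i \nabla^\perp\rhotf/\rhotf(b_i)$. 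A Gronwall-type stability argument, using the weighted energy excess as Lyapunov functional and the distance between empirical measures to quantify closeness, then upgrades the $W^{-1,1}_{\mathrm{loc}}(\Otf)$ convergence of the Jacobians to every time before the points $b_i(t)$ collide.
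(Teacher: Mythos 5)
Your overall architecture (the substitution $u_\eps=\eta w_\eps$, the identity $\mathcal{E}_{\eps,V}(u)=\mathcal{E}_{\eps,V}(\eta)+E_{\eps,\eta}(w)+\frac{\lambda}{2\eps^2}(M(u)-M(\eta))$, conservation of the weighted excess energy, a weighted lower bound matching the hypothesis, and a Jacobian evolution identity closed by Gronwall) is indeed the skeleton of the paper's argument, which reduces Theorem \ref{thm:limit} to a quantitative statement (Theorem \ref{thm:main}) for the rescaled equation \eqref{eq:nhgp} plus the ground-state estimates of Proposition \ref{prop:groundstate}. Your second step is also fine in spirit: the paper does not need the angular cancellation you invoke; it simply uses the Jerrard--Sandier ball lower bounds together with the Lipschitz modulus of $\eta^2$ on $B(a_i,4r)$ and optimizes over $r$ (Proposition \ref{prop:relate}).

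The genuine gap is in your third step. The time derivative of $\int\varphi\, Jv$ is given by \eqref{eq:evoljac}, whose leading term is $\logeps^{-1}\int \ep_{lj}\varphi_{x_l}\frac{\eta^2_{x_k}}{\eta^2}\, v_{x_j}\cdot v_{x_k}$: it involves the full stress tensor $v_{x_j}\cdot v_{x_k}$ contracted with $\nabla\log\eta^2\otimes\nabla^\perp\varphi$, \emph{not} the scalar energy density against $\nabla\log\rhotf\cdot\nabla^\perp\varphi$ as you write. Passing from one to the other requires knowing that, at the core scale, the stress tensor is asymptotically isotropic, i.e.\ $v_{x_j}\cdot v_{x_k}\approx \delta_{jk}\tfrac12|\nabla v|^2$ in an averaged sense across $B(\xi_i,\logeps^{-1})$; weak-$*$ concentration of $e_{\eps,\eta}(v)$ at points does not imply this, and in the homogeneous case one never needs it because the coefficient $\nabla\eta^2$ vanishes and the stress tensor only has to be controlled away from the cores, where $|v|\approx 1$. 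Supplying this isotropy is precisely the main new content of the paper: one first localizes the Jacobian to precision $r_\xi\sim\eps^{1-K}$ (Proposition \ref{prop:loca}), then proves the across-the-core $L^2(\eta^2 dx)$ approximation of $j(v)/|v|$ by the explicit radial field $j_*$ together with the compatibility estimate $\|\nabla\times(j(v)-j_*)\|_{W^{-1,1}}\le C r_\xi$ (Proposition \ref{prop:approx}), and the explicit computation $\int (j_*)_j(j_*)_k/\logeps=\pi\delta_{jk}(1+o(1))$ produces the isotropic limit; the cross terms $(j(v)/|v|-j_*)\cdot j_*$ are then controlled only after a further Hodge decomposition on each ball, the continuity equation \eqref{eq:continuity}, and a negative-Sobolev interpolation (Steps 5--8 of Proposition \ref{prop:discrineq}). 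None of these ingredients appear in your proposal, and without them the identity you assert in step three is not justified. A secondary point: the weighted energy excess $\Sigma^t$ is \emph{conserved} along the flow, so it cannot serve as a Lyapunov functional; the Gronwall quantity must be the discrepancy $r_a^t$ itself (more precisely $\Sigma^0+g(r_a^t)$), run on the discrete time mesh of Proposition \ref{prop:discrineq}.
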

Here, $j(u_\eps^t):=(iu_\eps^t,\nabla u_\eps^t)$ where $(z,w):={\rm Im}(z \bar{w}).$ Therefore,
$$ \frac12 {\rm curl} \big( \frac{j(u_\eps^t)}{\rhotf}\big) = \frac12 {\rm curl}\: j\big(\frac{u_\eps^t}{\sqrt{\rhotf}}\big) = J\big(\frac{u_\eps^t}{\sqrt{\rhotf}}\big)$$ 
is the Jacobian determinant of $u_\eps^t/\sqrt{\rhotf}.$ It is widely recognized, in the present regime for the Ginzburg-Landau energy, that the notion of a vortex of winding number $d_i$ located at the point $b_i(t)$ is appropriately 
described by the presence of the term $2\pi d_i \delta_{b_i(t)}$ in the limit of the vorticity field 
${\rm curl}\: j\big(u_\eps^t/\sqrt{\rhotf}\big).$

\begin{rem}{\rm
Note that the ordinary differential equations \eqref{eq:limitdynamics0} are decoupled. Also,  since $\rhotf(b_i(t)) = \rhotf(b_i^0)$ for any $t\in \R$ the points $ \{b_i(t)\}_{i=1}^l$ remain distinct for all times
	unless two of them are located on the same level line of $\rhotf$ and have opposite circulations.
	}
\end{rem}

Results of this sort in the homogeneous case
$\eta\equiv 1$ were first proved in the late 1990s, see \cite{CoJe, CoJe2},
and have subsequently been developed by a number of authors, see for example  
\cite{LiXi,BeJeSm,JeSp}.
The point of this paper is thus to understand the effect of the
inhomogeneity on the dynamical law for the vortices.\\
We remark that a number of authors have studied questions about vortex dynamics in
inhomogeneous backgrounds for parabolic
equations  \cite{Li, JianSong}, or more recently \cite{SerfTice} for 
a quite general class of equations of mixed parabolic-Schr\"odinger 
type. The case of pure Schr\"odinger  dynamics presents distinct  difficulties and as far as we know has not been treated until now.

The sequel of this introduction is devoted to the presentation of the strategy leading to Theorem \ref{thm:limit}. We notice that will actually prove a result (Theorem \ref{thm:main} below) which is stronger in two respects than Theorem \ref{thm:limit}: first it describes the dynamics of vortices at small but fixed value of $\eps$, rather than
asymptotically as $\eps \to 0$ in Theorem \ref{thm:limit}, and second it applies to a broader class of inhomogeneous equations (see \eqref{eq:nhgp} below) where $\eta$ need not necessarily be the profile of a ground state.   

%We fix a time $T_{col}>0$ such that
%\begin{equation}
%\rho_{min} = \min_{t\in [0,T_{col}]}
%\min\big\{\{ \frac 12 d(a_i(t),a_j(t))\}_{\ i\neq j}\cup \{d(a_i(t),\partial \Omega)\}_{i} \cup \{1\} \big\}
% > 0.  
%\label{rhomin.def}\end{equation}
%(Note that if all $a_i^0$ lie on distinct level sets of $\eta$, then we can take $T_{col}=+\infty$.)

%%%%%%%%%%%%%%%%%%%%%%%%%%%%%%%%%%%%%%%%%%%%%%%%%%%%%%%%%%%%%%%%%
\subsection{Perturbation equation and Theorem \ref{thm:main}}
%%%%%%%%%%%%%%%%%%%%%%%%%%%%%%%%%%%%%%%%%%%%%%%%%%%%%%%%%%%%%%%%%

For the class of initial data which we consider in Theorem \ref{thm:limit}, it is convenient to 
rewrite the corresponding solutions of \eqref{eq:gpv} in the form 
\begin{equation}\label{eq:change}
	u(x,t) = \eta(x)  w(x,t)
\end{equation}  
and to study the evolution equation for $w$.
One easily checks that if $u$ is a solution to \eqref{eq:gpv}, then $w$ solves
$$
i\eta^2 \partial_t w - {\rm div}(\eta^2\nabla w) +\frac{1}{\eps^2} \eta^4
(|w|^2-1)w = -\frac{ \lambda}{\eps^2} \eta^2 w.
$$
In particular, the change of phase and time scale 
$$
v(x,t) =
\exp\left(i\frac{\lambda}{\eps^2}\frac{t}{\logeps}\right)w\left(x,\frac{t}{\logeps}
\right) 
$$
leads to the equation  
\renewcommand{\theequation}{NHG}
\begin{equation}\label{eq:nhgp}
i\logeps \eta^2 \partial_t v - {\rm div}(\eta^2\nabla v) +\frac{1}{\eps^2}
\eta^4 (|v|^2-1)v = 0
\end{equation}
for $v$. Note that the change of time scale is related to the fact that the 
phenomenon which we wish to describe, namely vortex motion, arises in times of order
one in that new time scale (see the definition of $u_\eps^t$ in the statement of Theorem \ref{thm:limit}). 
\renewcommand{\theequation}{\arabic{equation}}
\setcounter{equation}{2}

\medskip

Our analysis will henceforth focus on equation
\eqref{eq:nhgp}. Equation \eqref{eq:nhgp}, like \eqref{eq:gpv},  is Hamiltonian, with Hamiltonian
given  by the weighted Ginzburg-Landau energy
\begin{equation}\label{Eepseta}
E_{\eps,\eta}(v) \equiv \int_{\R^2} e_{\eps,\eta}(v) =   \int_{\R^2} \eta^2
\frac{|\nabla v|^2}{2} + \eta^4 \frac{(|v|^2-1)^2}{4\eps^2}. 
\end{equation}
As a matter of fact, using the Euler-Lagrange equation for $\eta$, one realizes 
that    
\begin{equation}\label{eq:parotide2}
\mathcal{E}_{\eps,V}(u) = \mathcal{E}_{\eps,V}(\eta) + E_{\eps,\eta}(v) + \frac{\lambda}{2\eps^2}\Big( M(u)-M(\eta)\Big).
\end{equation}
In the sequel, we enlarge our framework and consider equation \eqref{eq:nhgp} where $\eta\ : \ \R^2 \to \R$ is any smooth positive function such that the corresponding Cauchy problem is globally well-posed for initial data in $H^1(\R^2,\eta\,dx)$ and such that the corresponding solutions can be approximated by smooth solutions\footnote{This can be verified for a wide 
variety of weight functions $\eta,$ but we wish not consider that discussion here since we already know by means of the change of unknown \eqref{eq:change} that it satisfied when $\eta$ is a ground state.}. In particular, under those assumptions the energy $E_{\eps,\eta}$ is preserved along the flow of \eqref{eq:nhgp}. 

Let $\eps >0$ and let $\Omega \subset \R^2$ be a bounded open set. Let $\{a_i^0\}_{i=1}^l$ be distinct points 
in  $\Omega$, and let $d_1,\ldots, d_l \in \{-1,+1\}.$ For each $i\in \{1,\cdots,l\},$ we denote
by $a_i(t)$ the solution, as long as it does not reach $\partial\Omega,$ of the ordinary differential equation
\begin{equation}\label{eq:limitdynamics}
\dot a_i (t) = d_i \nabla^\perp \log\eta^2(a_i(t)),
\end{equation}
where $\nabla^\perp = (-\partial_{x_2}, \partial_{x_1})$,
with initial datum $a_i(0) = a_i^0.$\\
We assume that
$$
\eta_{min} :=  %\frac 12 \min_i \eta(a^0_i),\quad\quad
\inf_{x\in \Omega} \eta(x) >  0,
$$
and we fix a time $T_{\rm col}>0$ such that
\begin{equation}
	\rho_{min} := \min_{t\in [0,T_{\rm col}]}
\min\big\{\{ \frac 12 d(a_i(t),a_j(t))\}_{\ i\neq j}\cup \{d(a_i(t),\partial \Omega)\}_{i} \cup \{1\} \big\}
 > 0.
\label{rhomin.def}\end{equation}
Finally, we consider a finite energy solution $v$ of \eqref{eq:nhgp}, we set $v^t:=v(\cdot,t)$ and we define, for $t\in [0,T_{\rm col}],$
\begin{equation}\label{eq:defrta}
 r_a^t := \| J v^t - \pi \sum_{i=1}^l d_i
\delta_{a_i(t)}\|_{W^{-1,1}(\Omega)},
\end{equation}
and
\begin{equation}\label{defSigmat}
\Sigma^t := \frac{\Ee(v^t)}{\logeps} - \pi \sum_{i=1}^l \eta^2(a_i(t)).
\end{equation}

We will deduce Theorem \ref{thm:limit} from 

\begin{thm}\label{thm:main}
There exist positive constants $\eps_0,$ $\gamma_0$ and $C_0$, depending
only on $l$, $\eta_{min}$, $\rho_{min}$, and $\|\nabla \eta^2\|_{L^\infty(\Omega)}$, such that if $0<\eps\leq \eps_0$ and if $\Sigma^0 + r_a^0 \leq \gamma_0,$
then 
\begin{equation}\label{eq:ineqmainalt}
	r_a^t \leq r_a^0 + \Big( \Sigma^0 + r_a^0 + \frac{\log\logeps}{\logeps} \Big) \big( e^{C_0t}-1\big) + C_0\eps^\frac12,
\end{equation}
%\begin{equation}\label{eq:ineqmain}
%r_a^t \leq  Y(t) + \eps^\frac{1}{2}
%\end{equation}
as long as $t\leq T_{\rm col}$ and $\Sigma^0 + r_a^t(t) \leq \gamma_0.$
%where $Y$ denote the solution of the ordinary differential equation
%$$
%\dot Y(t) = C_0\Big(\Sigma^0 + g(Y(t))\Big),\qquad Y(0) = r_a^0, 
%$$
%and the growth rate $g$ is defined in \eqref{eq:defg}. 
\end{thm}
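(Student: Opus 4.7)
The plan is to close a Gronwall-type loop built from three ingredients: conservation of the weighted energy $E_{\eps,\eta}$ along \eqref{eq:nhgp}, a sharp lower bound for $E_{\eps,\eta}$ in terms of the approximate vortex positions (the ``across the cores'' estimate announced in the abstract), and an evolution equation for the Jacobian $Jv^t$ derived from \eqref{eq:nhgp}. Throughout, I work under the bootstrap assumption $\Sigma^0+r_a^t\le\gamma_0$ stipulated in the theorem, which keeps the sharp lower bound effective.

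\emph{Conservation of the excess energy.} Differentiating \eqref{eq:limitdynamics} gives $\frac{d}{dt}\eta^2(a_i(t))=d_i\nabla\eta^2(a_i)\cdot\nabla^\perp\!\log\eta^2(a_i)=0$, so $\sum_i\eta^2(a_i(t))$ is a constant of the motion. Since $E_{\eps,\eta}(v^t)$ is preserved by \eqref{eq:nhgp}, this yields $\Sigma^t\equiv\Sigma^0$, and only $r_a^t$ evolves nontrivially. \emph{Sharp lower bound.} I then prove a weighted analogue of the classical Jerrard--Sandier vortex lower bound,
$$\frac{E_{\eps,\eta}(v^t)}{|\log\eps|}\ge \pi\sum_{i=1}^l\eta^2(a_i(t))-C_0\Bigl(r_a^t+\eps^{1/2}+\tfrac{\log|\log\eps|}{|\log\eps|}\Bigr),$$
valid whenever $\Sigma^0+r_a^t\le\gamma_0$. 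A ball construction localizes the $l$ vortices of $v^t$ around concentration points, which the $W^{-1,1}$-closeness of $Jv^t$ to $\pi\sum d_i\delta_{a_i(t)}$ identifies with $a_1(t),\ldots,a_l(t)$. The novelty is the integration of the vortex profile against the slowly-varying weight $\eta^2$: performing the estimate \emph{across} the cores rather than on concentric annuli is what preserves the precise leading coefficient $\eta^2(a_i(t))$ without a $|\log\eps|^{-1}$ loss. Combined with the conservation of $\Sigma^t$, this shows that the excess energy of $v^t$ outside small disks around the $a_i(t)$ is of order $\bigl(\Sigma^0+r_a^t+\tfrac{\log|\log\eps|}{|\log\eps|}\bigr)|\log\eps|$.

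\emph{Evolution of the Jacobian.} Starting from $2Jv=\mathrm{curl}\,j(v)$ and differentiating in time via \eqref{eq:nhgp}, one obtains for every $\varphi\in C_c^\infty(\Omega)$ an identity that, after integrations by parts, takes the schematic form
$$|\log\eps|\frac{d}{dt}\!\int\!\varphi\, Jv^t=\int\Bigl(D^2\varphi:\tfrac{T(v^t)}{\eta^2}+\nabla\varphi\cdot\tfrac{\nabla\eta^2}{\eta^2}\,G(v^t)\Bigr)dx,$$
with $T(v^t)$ the weighted stress-energy tensor of $E_{\eps,\eta}$ and $G(v^t)$ a bilinear combination of $j(v^t)$ and the energy density. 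The excess energy control from the previous step bounds the first integral by $C_0(\Sigma^0+r_a^t+\tfrac{\log|\log\eps|}{|\log\eps|})\|\varphi\|_{W^{2,\infty}}$, while the second integral, which concentrates on the cores, evaluates via the across-the-cores asymptotics to
$$\pi\sum_{i=1}^l d_i\nabla\varphi(a_i(t))\cdot\nabla^\perp\!\log\eta^2(a_i(t))+O\Bigl(r_a^t+\Sigma^0+\tfrac{\log|\log\eps|}{|\log\eps|}+\eps^{1/2}\Bigr)\|\varphi\|_{W^{1,\infty}}.$$
Since the main term exactly reproduces $\frac{d}{dt}\pi\sum_i d_i\varphi(a_i(t))$ coming from \eqref{eq:limitdynamics}, subtracting and taking the supremum over $\|\varphi\|_{W^{1,\infty}(\Omega)}\le 1$ produces a Gronwall-type integral inequality for $r_a^t$ that integrates to \eqref{eq:ineqmainalt}.

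\emph{Main obstacle.} The crux of the argument is the precise evaluation of the core-concentrated integral $\int\nabla\varphi\cdot(\nabla\eta^2/\eta^2)\,G(v^t)\,dx$ to leading order in $1/|\log\eps|$. In the homogeneous case $\eta\equiv 1$ one has $\nabla\eta^2\equiv 0$ and the leading order vanishes, so the limit dynamics appears only at the next order coming from vortex-vortex interactions; here the inhomogeneity produces an $O(1)$ drift, and no $o(1)$ ambiguity in its constant $\pi d_i\nabla^\perp\!\log\eta^2(a_i)$ is affordable. Achieving this requires sharp asymptotic expansions of $v^t$ \emph{across} the vortex cores, together with matching lower bounds tight to $o(1)$ accuracy, which is precisely the main new analytical ingredient highlighted in the abstract.
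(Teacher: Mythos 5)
Your high-level architecture (conservation of $\Sigma^t$, a weighted lower bound, the Jacobian evolution identity, a Gronwall closure) does match the paper's strategy, but the proposal asserts exactly the estimate that is hardest to prove and would not, as stated, close. The critical claim is that the core-concentrated term $\frac1\logeps\int\nabla\varphi\cdot\frac{\nabla\eta^2}{\eta^2}\,G(v^t)$ equals the drift $\pi\sum d_i\nabla\varphi(a_i)\cdot\nabla^\perp\log\eta^2(a_i)$ up to an error $O(\Sigma^0+r_a^t+\frac{\log\logeps}{\logeps})$ \emph{pointwise in time}. Decomposing $v_{x_j}\cdot v_{x_k}=|v|_{x_j}|v|_{x_k}+\frac{j(v)_j}{|v|}\frac{j(v)_k}{|v|}$ and inserting the reference field $j_*$ (which first requires the improved localization of the Jacobian to a scale $r_\xi\approx\eps\exp(C_1(\Sigma^0+g(r_a))\logeps)\le\eps^{1/2}$ around points $\xi_i$, Proposition \ref{prop:loca} — absent from your outline), the diagonal terms behave as you say: the $j_*\otimes j_*$ term gives the drift, and the quadratic-in-difference terms are $O(\Sigma^0+g(r_a))$ by Proposition \ref{prop:approx}. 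But the cross term $\frac1\logeps\int\psi_{jk}\bigl(\frac{j(v)}{|v|}-j_*\bigr)_j(j_*)_k$ only admits, by Cauchy--Schwarz with $\|j_*\|_{L^2}^2\sim\logeps$ and $\int\eta^2|\frac{j(v)}{|v|}-j_*|^2\le C(\Sigma^0+g(r_a))\logeps$, the bound $C(\Sigma^0+g(r_a))^{1/2}$; the square root is fatal to the Gronwall loop. The paper's resolution is precisely to \emph{not} estimate this term at fixed time: it proves a discrete differential inequality over mesoscopic time steps $T-t=(r_\xi^t)^2/\eps$ (Proposition \ref{prop:discrineq}), time-averages the cross term, performs a weighted Hodge decomposition $\tilde\chi_i(j(v)-j_*^s)=\nabla f^s+\eta^{-2}\nabla^\perp g^s$, controls the gradient part through the continuity equation \eqref{eq:continuity} (conservation of mass — a key structural input your proposal never invokes), and controls the rotational part through the refined estimate $\|\nabla\times(j(v)-j_*)\|_{W^{-1,1}}\le Cr_\xi$ plus negative-Sobolev interpolation. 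None of this machinery appears in your plan, and without it the inequality you integrate does not exist.

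A secondary but genuine gap: you propose to "take the supremum over $\|\varphi\|_{W^{1,\infty}}\le1$" at the end, but the Jacobian evolution identity \eqref{eq:evoljac} contains the term $\frac1\logeps\int\ep_{lj}\varphi_{x_kx_l}v_{x_j}\cdot v_{x_k}$, which for a general $W^{1,\infty}$ test function is uncontrollable (the energy concentrates at rate $\logeps$ at the cores, so one needs $D^2\varphi=0$ there). The paper handles this via Lemma \ref{lem:W-11}: the $W^{-1,1}$ distance between two finite sums of signed Diracs is realized by explicit test functions that can be taken exactly linear near each $a_i(t)$ with $\|\varphi\|_{W^{2,\infty}}\le C\rho_{min}^{-2}$, so the second-derivative term is supported away from the cores and bounded by the exterior energy. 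Finally, a minor correction: the sharp leading coefficient $\pi\eta^2(a_i)$ in the lower bound does not come from an "across the cores" argument; it follows from the standard unweighted Jerrard--Sandier bound on $B(a_i,4r)$ combined with $\inf_{B(a_i,4r)}\eta^2\ge\eta^2(a_i)-Cr$ (Proposition \ref{prop:relate}). The across-the-cores input is the $L^2$ approximation of $j(v)/|v|$ by $j_*$, which is needed for the cross term above, not for the lower bound.
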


\begin{rem}\label{rem:2}{\rm
	$i)$ As we shall discuss in Section \ref{sec:heuristics} below, the
quantity $r^t_a$, which is a sort of discrepancy measure,
can be thought of as measuring the distances between
the ``actual vortex locations'' and  the desired vortex locations.
The quantity $\Sigma^t$, multiplied by $\logeps$, corresponds to the
excess of energy of the solution $v$ with respect to an energy minimizing
field possessing the vortices at the points $a_i(t).$
 Notice that since $\Ee$ 
is preserved by the flow for $v$ and $\eta^2$ is
preserved by the flow
for the $a_i's$, we have
$$
\Sigma^t \equiv \Sigma^0, \qquad \forall \ t\in [0,T].
$$

$ii)$ Theorem \ref{thm:main} is interesting for initial data such that $\Sigma^0 + r^0_a$ is small.
The existence of such data is standard. For example, 
%if we fix distinct points $\{ a_i^0\}_{i=1}^l$ in $\Omega_0$, then
if we fix 
$f:[0,\infty)\to [0,1]$ such that $f'\ge 0$,  $f(0)=0$, and $f(s)\to 1$ as $s\to \infty$,
then for the initial data
\[
	v^0(z) := \prod_{i=1}^l f(\frac{|z-a_i|}\eps) \left( \frac{z- a_i^0}{|z-a_i^0|}\right)^{d_i},
\qquad\qquad\mbox{  $x := (x_1, x_2) \cong z = x_1+ix_2$},
\]
one can check that $\Sigma^0 \le C\logeps^{-1}, r^0_a\le C \eps$. In any case, \eqref{eq:ineqmainalt} contains
the error term in $\log\logeps/\logeps$ which implies that \eqref{eq:ineqmainalt} only yield the inequality 
$\Sigma^0 +r_a^t \leq \gamma_0$ for times at most of order $\log\logeps.$  

% $iii)$ Note that $g'\le 2$ everywhere, so that $(\Sigma^0 + g(Y(t)))' \le 2C_0(\Sigma^0+g(Y(t)))$.
%Thus it follows from \eqref{eq:ineqmain}  that
%$$
%r_a^t \leq \Sigma^0 + g(Y(t)) + \eps^\frac12 \leq \Big( \Sigma^0 +g( r_a^0)\Big) \exp(2C_0t) + \eps^\frac12.
%$$       

$iii)$ One could supplement the claims of Theorem \ref{thm:main}
with closeness estimate for $j(v)$ to a reference field $j_*$ of very simple form. This would follow from an application of Corollary \ref{cor:proche} below; however at the level of approximation which we have adopted here it is 
only meaningful in a neighborhood of size $o(1/\logeps)$ of the vortex core.  
}
\end{rem}

\subsection{Elements in the proofs}\label{sec:heuristics}

Under the conditions that will prevail throughout most of this paper, 
we will be able to identify points $\xi_1^t,\ldots \xi_l^t$ and
a number $r_\xi^t$ such that
\begin{equation}
\| Jv^t - \pi\sum_{i=1}^l d_i \delta_{\xi_i^t}\|_{W^{-1,1}(\Omega)}\ \le r^t_\xi \  \le  \ \eps^{1/2}\ \ll \  r^t_a.
\label{heuristics0}\end{equation}
This is expressed in Proposition \ref{prop:loca} below, and entitles us to think of $\xi_i^t, i=1,\ldots, l$ as being the ``actual locations
of the vortices" in $v^t$, up to precision of order $\le r^t_\xi$. Admitting this interpretation, then basic facts about the $W^{-1,1}$ norm, recalled in Section \ref{sect:w11}, 
imply that 
\begin{equation}
r^t_a = \frac 1 \pi   (1 + o(1))
 \sum_{i=1}^l |\xi_i^t -a_i^t|
\label{heuristics1}
\end{equation}
is essentially the aggregate 
distance between the actual vortex locations and  the desired vortex locations, as remarked above.

Heuristic considerations also suggest that if $v^t$ is a function with vortices
at points $\xi_1^t,\ldots, \xi_l^t$ (or more precisely, if \eqref{heuristics0} holds), then
\begin{equation}\label{heuristics2}
E_{\eps, \eta}(v^t) \gtrapprox \pi \logeps (1-o(1)) \sum_{i=1}^l \eta^2(\xi_i^t).
\end{equation}
Hence $E_{\eps, \eta}(v^t) - \pi \logeps \sum_{i=1}^l \eta^2(\xi_i^t)$
corresponds to energy that is not committed to
the vortices, and this energy in principle can
cause difficulties for our analysis. From \eqref{heuristics0}, \eqref{heuristics1}, 
we have
\begin{equation}\label{heuristics3}\begin{split}
E_{\eps, \eta}(v^t) - \pi \logeps \sum_{i=1}^l \eta^2(\xi_i^t)
&\le 
\logeps \left(\Sigma^t  + \frac 1{\pi}(1+o(1))\| \nabla \eta^2\|_\infty r^t_a\right)\\
&\leq \logeps \left( \Sigma^0  +C r^t_a\right).
\end{split}\end{equation}
For our analysis, it suffices to use estimates in the spirit of \eqref{heuristics3} that are a 
little weaker than those suggested in \eqref{heuristics3}, these
are established in Proposition \ref{prop:relate}. We expect from \eqref{heuristics2} and \eqref{heuristics3}
that control of $r^t_a$ should yield a good deal of information about $v^t$. This is expressed in 
Proposition \ref{prop:approx}, where we compare $j(v^t)$ to a reference field $j_*^t.$ An important feature of that approximation is that it holds across the vortex core. 

%  As a consequence of this, 
%the quantity
%\[
%C(\Sigma^0+ g(r^t_a)), \qquad\mbox{$g$ defined in \eqref{eq:defg}},
%\]
%appears throughout the paper (notably, in
%our main estimate in Theorem \ref{thm:main})  playing roughly the
%role suggested for $(\Sigma^0+ r_a^t)$ by the above considerations.

In order to control the evolution in time of $r_a^t$, we rely on some evolution equations satisfied by 
smooth solutions of \eqref{eq:nhgp}. %\begin{equation}\label{eq:evoljac}
Conservation of energy is a consequence of the identity
\begin{equation}\label{eq:conserve_e}
\partial_t e_{\eps,\eta}(v) = \ {\rm div}( \eta^2 (\nabla v, v_t)),
\end{equation}
and the canonical equation for conservation of mass can be written
\begin{equation}\label{eq:continuity}
\frac \logeps 2  \partial_t \big (\eta^2(|v|^2-1)\big) = 
\nabla \cdot (\eta^2 j(v)) .
\end{equation}
%\frac \partial{\partial t}Jv = 
%\ep_{jl} \partial_{x_l}
%\left(\frac{\eta^2_{x_k}}{\eta^2} \left[ v_{x_j}\cdot v_{x_k} + \delta_{jk} \eta^2
%(|v|^2-1)^2\right]\right) +  \ep_{lj}  \partial_{x_kx_l} (v_{x_j}\cdot v_{x_k})
%\end{equation}
%In particular, if $\varphi$ is any smooth, compactly supported test function, then
%{\bf fix logs!!}
The vorticity $Jv$ satisfies an evolution equation that it is convenient to write in integral form:
\begin{equation}
\frac{d}{dt} \int_\Omega \varphi Jv = \frac 1 \logeps \int_\Omega 
\left( \ep_{lj} \varphi_{x_l}
\frac{\eta^2_{x_k}}{\eta^2} \left[ v_{x_j}\cdot v_{x_k} + \delta_{jk} \frac{\eta^2}{\eps^2}
(|v|^2-1)^2\right] +  \ep_{lj}  \varphi_{x_kx_l} v_{x_j}\cdot v_{x_k}\right)
\label{eq:evoljac}\end{equation}
where $\varphi$ is any smooth, compactly supported test function and $\eps_{lj}$ is the usual antisymmetric tensor.
This follows from the fact that $Jv = \frac 12 {\rm curl}\: j(v)$
together with the equation for the evolution of $j(v)$,
which is obtained from \eqref{eq:nhgp} after multiplying 
by $\nabla v$ and rewriting the result. 

Identity \eqref{eq:evoljac}
is central to our analysis of vortex dynamics, as in previous works
\cite{CoJe, CoJe2, LiXi,BeJeSm,JeSp} on the homogeneous case
(for which of course \eqref{eq:evoljac} still holds, with $\eta \equiv 1$).
Under the conditions that $Jv$ is approximately a measure of the form
$\pi \sum_{i=1}^l d_i \delta_{\xi_i(t)}$, where $\xi_i(t)$ are the vortex 
locations and $d_i\in \{\pm 1\}$ their signs, one expects  the left-hand side
of \eqref{eq:evoljac} to satisfy
\[
\frac{d}{dt} \int_\Omega \varphi Jv 
\ \approx \ 
\frac{d}{dt} \int_\Omega \varphi (\pi \sum d_i \delta_{\xi_i(t)})
\ \approx  \ 
\frac d{dt}( \pi \sum  d_i \varphi(\xi_i(t)))
\ = \ 
\pi \sum d_i \nabla\varphi(\xi_i(t))\cdot \dot \xi_i(t).
\]
Assuming that this holds, then to understand the vortex velocities $\dot \xi_i$,
it only suffices to understand the right-hand side of \eqref{eq:evoljac}.
It turns out that it also suffices to consider test functions $\varphi$ that are linear near each 
vortex. For such test functions, in the homogeneous case $\nabla \eta^2\equiv 0$, the integrand on the right-hand side of \eqref{eq:evoljac} is supported away from the vortex locations, and
one is thus able to control vortex
dynamics by controlling terms of the form $v_{x_i}\cdot v_{x_j}$ away
from the vortex cores. This argument is a key feature of all existing work on
vortex dynamics in the homogeneous case.

When $\nabla \eta^2\ne 0$, it becomes necessary to 
control terms like $v_{x_i}\cdot v_{x_j}$  across the vortex cores.
Carrying this out, in particular relying on the approximation given by  Proposition \ref{prop:approx}, 
is the main new point in our analysis. Once this is established, the whole argument is completed by using a Gronwall type argument on
a quantity related to $r_a^t$, namely $\Sigma^0+g(r_a^t)$, where the function $g$b is defined in \eqref{eq:defg}. This
demonstrates in particular that the new information found in Proposition \ref{prop:approx} is strong enough to close the estimates and conclude the proof.

\medskip
{\noindent \bf Acknowledgments.} 
This research was partially supported by the National
Science and Engineering Research Council of Canada under operating Grant
261955, as well as by the projects ``Around the dynamics of the Gross-Pitaevskii equation'' (JC09-437086) and ``Schr\"o\-din\-ger equations and applications'' (ANR-12-JS01-0005-01) of the Agence Nationale de la Recherche.

%%%%%%%%%%%%%%%%%%%%%%%%%%%%%%%%%%%%%%%%%%%%%%%%%%%%%%%%%%%%%%%%%%%%%%%%%%%%%%%%%%%%%%%%%%%%%%%%%%
\section{A useful lemma}\label{sect:w11}
%%%%%%%%%%%%%%%%%%%%%%%%%%%%%%%%%%%%%%%%%%%%%%%%%%%%%%%%%%%%%%%%%%%%%%%%%%%%%%%%%%%%%%%%%%%%%%%%%%
 
We frequently use the $W^{-1,1}$ norm. The specific convention we use is in our definition is
\[
\| \mu \|_{W^{-1,1}(\Omega)} := \sup \{ \langle \mu, \varphi \rangle \ : \ \varphi\in W^{1,\infty}_0(\Omega),
\max \{ \| \varphi\|_\infty, \|\nabla \varphi\|_\infty\} \le 1\} .
\]
In this paper, we will only use this norm on measures or more regular objects, although of course it is well-defined
for a somewhat larger class of distributions.

The following lemma, which we will use numerous times, is an easy special case of classical results (see \cite{BCL} for 
example).

\begin{lem}
Suppose that $\Omega$ is an open subset of $\R^n$, and that
$\{ a_i\}_{i=1}^l$ are distinct points in $\Omega$. Define
$\rho_a := \min \{ \{\frac 12 |a_i -a_j| \}_{i\ne j} \cup \{ d(a_i, \partial \Omega )\}_i \cup \{1\}\}$.
Given any points $\{ \xi_i\}_{i=1}^l$ in $\Omega$ and $\{ d_i\}_{i=1}^l \in \{\pm 1\}^l$,
if
\begin{equation}\label{eq:W11.1}
\| \sum_{i=1}^l d_i \delta(a_i - \xi_i) \|_{W^{-1,1}(\Omega)}   \le \frac 14 \rho_a,
\end{equation}
then (after possibly relabelling the points $\{ \xi_i\}_{i=1}^l$), 
\begin{equation}\label{eq:W11.2}
\| \sum_{i=1}^l d_i \delta(a_i - \xi_i) \|_{W^{-1,1}(\Omega)} = \sum_{i=1}^l |a_i - \xi_i|. 
\end{equation}
\label{lem:W-11}\end{lem}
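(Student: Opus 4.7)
The plan is to prove the lemma in two stages: first, use the norm bound to extract a natural bijection between the $a_i$'s and $\xi_i$'s so that matched points are close and share their sign $d_i$; second, establish the exact equality by pairing a straightforward upper bound with a localized lower-bound competitor.

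For the localization stage, abbreviate $\mu := \sum_{i=1}^l d_i(\delta_{a_i} - \delta_{\xi_i})$ and test $\mu$ against
\[
\Phi(x) := \sum_{j=1}^l d_j \bigl(\tfrac{\rho_a}{2} - |x - a_j|\bigr)^+.
\]
Because the $a_j$'s are separated by at least $2\rho_a$, the summands have disjoint supports; because $d(a_j,\partial\Omega) \ge \rho_a$, each summand is compactly supported in $\Omega$; hence $\Phi$ is $1$-Lipschitz with sup-norm $\le \rho_a/2 \le 1$ and therefore admissible. Evaluating $\langle \mu,\Phi\rangle = l\rho_a/2 - \sum_i d_i \Phi(\xi_i)$ together with the hypothesis yields
\[
\sum_{i=1}^l d_i \Phi(\xi_i) \ge l\tfrac{\rho_a}{2} - \tfrac{\rho_a}{4}.
\]
Since each $\xi_i$ contributes at most $\rho_a/2$ to this sum (by disjointness of the individual hat supports), a comparison of the total with the individual bounds forces (a) every $\xi_i$ to lie in some ball $B(a_{\sigma(i)},\rho_a/2)$, (b) the sign matching $d_i = d_{\sigma(i)}$ --- were any of the $l$ contributions negative, the sum could not reach the required lower threshold --- and (c) $\sum_i |\xi_i - a_{\sigma(i)}| \le \rho_a/4$. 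The map $\sigma$ must be a bijection of $\{1,\ldots,l\}$: if some index $j'$ were not in its image, testing against the single bump $(\rho_a/2 - |x-a_{j'}|)^+$ would give $|\langle\mu,\cdot\rangle| = \rho_a/2$, violating the hypothesis. Relabeling the pairs $(\xi_i, d_i)$ by $\sigma^{-1}$ leaves $\mu$ unchanged (since signs match), so we may henceforth assume $|\xi_i - a_i| \le \rho_a/4$ for every $i$.

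The upper bound $\|\mu\|_{W^{-1,1}} \le \sum_i |a_i - \xi_i|$ then follows immediately from the $1$-Lipschitz constraint on admissible test functions, since $|\varphi(a_i) - \varphi(\xi_i)| \le |a_i - \xi_i|$. For the matching lower bound, set
\[
\varphi_i(x) := \tfrac12 \Bigl[(|a_i - \xi_i| - |x - a_i|)^+ - (|a_i - \xi_i| - |x - \xi_i|)^+\Bigr],
\]
and $\varphi := \sum_i d_i \varphi_i$. Each $\varphi_i$ is $1$-Lipschitz, is bounded in sup-norm by $|a_i - \xi_i|/2$, and is supported in $B(a_i,|a_i-\xi_i|)\cup B(\xi_i,|a_i-\xi_i|)\subset B(a_i,\rho_a/2)$. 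Since the balls $B(a_i,\rho_a/2)$ are pairwise disjoint and contained in $\Omega$, the function $\varphi$ is admissible. A direct computation gives $\varphi_i(a_i) = |a_i-\xi_i|/2$ and $\varphi_i(\xi_i) = -|a_i-\xi_i|/2$, so $d_i(\varphi(a_i) - \varphi(\xi_i)) = |a_i-\xi_i|$ for each $i$, and summing yields $\langle \mu, \varphi\rangle = \sum_i |a_i-\xi_i|$, closing the argument.

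The delicate step is the first stage: extracting a global sign-preserving bijection from a single scalar norm bound requires the test function $\Phi$ to be designed so that positions and signs can be read off simultaneously, and the arithmetic of the lower threshold $l\rho_a/2 - \rho_a/4$ versus the per-index bound $\rho_a/2$ has just enough slack to eliminate sign mismatches and empty balls. The second stage is routine once the explicit elementary bump $\varphi_i$ has been identified.
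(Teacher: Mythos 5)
Your proof is correct and follows essentially the same route as the paper's: localize each $\xi_i$ in a ball $B(a_i,\rho_a/2)$ by testing against cone functions centered at the $a_i$'s, then pair the trivial Lipschitz upper bound with an explicit competitor realizing $\sum_i|a_i-\xi_i|$. The only (harmless) differences are that the paper localizes by testing each $d_i(\tfrac12\rho_a-|x-a_i|)^+$ separately and then invoking disjointness/pigeonhole rather than your single aggregated $\Phi$ (so the sign-matching you derive explicitly is left implicit there), and its final competitor is again $\sum_i d_i(\tfrac12\rho_a-|x-a_i|)^+$ --- which already gives $d_i(\varphi(a_i)-\varphi(\xi_i))=|a_i-\xi_i|$ exactly once $\xi_i\in B(a_i,\rho_a/2)$ --- rather than your two-sided dipole bump.
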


In the remainder of this paper, we will always tacitly assume that under the conditions of the lemma,
the points $\xi_i$ are labelled so that the conclusion holds.

We give the short proof for the reader's convenience. 

\begin{proof}For $i=1,\ldots, l$, define
$\varphi_i(x) :=   d_i( \frac 12 \rho_a - |x-a_i|)^+$.
Then $\max( \|\varphi_i\|_\infty, \|\nabla\varphi_i\|_\infty )  =\max(  \frac 12 \rho_a, 1) =1$, for every $i$, so
\[
\| \sum_{i=1}^l d_i \delta(a_i - \xi_i) \|_{W^{-1,1}(\Omega)} \ge \langle  \sum_{j=1}^l d_j (\delta_{a_j} - \delta_{\xi_j})
, \varphi_i\rangle = \frac  {\rho_a} 2- \sum_j d_id_j (\frac {\rho_a} 2 - 
|\xi_j-a_i|)^+.
\] 
Then \eqref{eq:W11.1} implies that $\{ \xi_j \}_{j=1}^l \cap B(a_i,\rho_a/2)$ is nonempty for every $i$. Since  $\{  B(a_i,\rho_a/
2) \}_{i=1}^l$ are pairwise disjoint, it follows (after possibly reindexing) that $\{ \xi_j \}_{j=1}^l \cap  B(a_i,\rho_a/2) = \{\xi_i\}$ 
for all $i$. Now let $\varphi = \sum_i\varphi_i$. The functions $\{ \varphi_i\}$ have disjoint support, so $\max( \|\varphi\|_\infty, \|\nabla\varphi\|_\infty ) =1$, and thus
$
\| \sum_{i=1}^l d_i \delta(a_i - \xi_i) \|_{W^{-1,1}(\Omega)} \ge  \langle \sum_{i=1}^l d_i (\delta_{a_i} - \delta_{\xi_i})
,  \varphi \rangle=  \sum_{i=1}^l |a_i - \xi_i|$.
On the other hand, if $\psi$ is any compactly supported 
function such that $\max( \|\psi\|_\infty, \|\nabla\psi\|_\infty ) \le 1$, then
\[
\langle \sum_{i=1}^l d_i (\delta_{a_i} - \delta_{\xi_i}),  \psi \rangle\le
 \sum_{i=1}^l |\psi(a_i) - \psi(\xi_i)| \le
\sum_{i=1}^l |a_i-\xi_i|.
\]
Hence $\| \sum_{i=1}^l d_i \delta(a_i - \xi_i) \|_{W^{-1,1}(\Omega)} \le
\sum_{i=1}^l |a_i-\xi_i|$. 
\end{proof}

%%%%%%%%%%%%%%%%%%%%%%%%%%%%%%%%%%%%%%%%%%%%%%%%%%%%%%%%%%%%%%%%%%%%%%%%%%%%%%%%%%%%%
\section{Relating weighted and unweighted energy}\label{sect:relating}
%%%%%%%%%%%%%%%%%%%%%%%%%%%%%%%%%%%%%%%%%%%%%%%%%%%%%%%%%%%%%%%%%%%%%%%%%%%%%%%%%%%%%

In this section, we relate the weighted and unweighted energy under some
localization assumptions on the Jacobian. For a measurable subset $A \subset \R^2$ and $v \in \dot H^1(A,\C)$ we set
$$
E_{\eps,\eta}(v;A) := \int_A e_{\eps,\eta}(v) \quad\text{and}\quad E_\eps(v;A):= E_{\eps,1}(v,A).
$$
Define the function
$g$ on $\R^+$ by  
\begin{equation}\label{eq:defg}
g(x) = \left\{ \begin{array}{ll}
x + \frac{|\! \log x|}{\logeps} & \text{if } x>\frac{1}{\logeps}\\
\frac{1+ \log\logeps}{\logeps} & \text{otherwise.}
\end{array}
\right.
\end{equation}
We have

\begin{prop}\label{prop:relate} 
Let $\Omega\subset \R^2$ an open set, $\{a_i\}_{i=1}^l$ distinct points in
$\Omega$, $\{d_i\}_{i=1}^l \in \{\pm 1\},$ and $\eta : \Omega \to \R$ a positive
Lipschitz function such that $\inf_{\Omega}\eta =: \eta_{\min} >0$.   Set $\rho_a :=
\min\{\{ \frac 12 d(a_i,a_j)\}_{\ i\neq j}\cup \{d(a_i,\partial \Omega)\}_{i} \cup \{1\}
\},$ and let $\eps \leq \exp(-\frac{8}{\rho_a})$ and $v\in \dot
H^1(\Omega,\C)$ be such that 
\begin{equation}\label{eq:surplus}
\Sigma_a := \left( \frac{\Ee(v)}{\logeps} - \pi \sum_{i=1}^l \eta^2(a_i) \right)^+
< +\infty.
\end{equation}
%where $(\cdots)^+ = \max (0, \cdots)$. 
Assume also that
\begin{equation}\label{eq:localisee}
r_a := \| J v - \pi \sum_{i=1}^l d_i \delta_{a_i}\|_{W^{-1,1}(\Omega)} \leq
\frac{\rho_a}{8}.
\end{equation}  
Then there exists a constant $C$, depending only on 
$l$, $\|\nabla \eta^2\|_\infty$ and $\eta_{min}$, 
such that 
\begin{equation}\label{eq:usuelle}
\begin{aligned}
\frac{E_{\tilde \eps}(v; B(a_i, R))}{|\!\log\tilde\eps|} 
&\leq 
\pi  + C(\Sigma_a + g(r_a))\quad\quad\mbox{ for }i=1,\ldots, l\\
\frac{E_{\tilde \eps}(v;\Omega\setminus \cup_{i=1}^l B(a_i, R))}{|\!\log\tilde\eps|} 
&\le  
 C(\Sigma_a + g(r_a))
\end{aligned}
\end{equation}
where
$R = 4\max( r_a, \logeps^{-1}) \le \frac{\rho_a}4$ and $\tilde\eps := \frac{\eps}{\eta_{min}}$,
and the function $g$ is defined in \eqref{eq:defg}.% on
%
%$\R^+$ by
%\footnote{One would obtain a correct statement replacing the function $g$ by $\tilde g$ where $\tilde g(x)=x+|\!\log x|/\logeps$ for every $x\in \R^+$,but by construction $g\leq \tilde g$ since $\tilde g$ is decreasing on $(0,1/\logeps)$ and $g(1/\logeps)=\tilde g(1/\logeps).$}   
%
\end{prop}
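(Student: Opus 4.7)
The plan is to combine three ingredients: a Jerrard--Soner Jacobian-based lower bound for the Ginzburg--Landau energy in each ball $B(a_i,R)$, the Lipschitz regularity of $\eta^2$ to compare weighted and unweighted integrands on small balls, and a direct algebraic subtraction against the global upper bound $\Ee(v)\le\logeps(\pi\sum_i\eta^2(a_i)+\Sigma_a)$ coming from \eqref{eq:surplus}.

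I would first fix $R:=4\max(r_a,\logeps^{-1})\le\rho_a/4$, so that the balls $\{B(a_i,R)\}_{i=1}^l$ are pairwise disjoint in $\Omega$. The hypothesis \eqref{eq:localisee}, restricted to test functions supported in $B(a_i,R)$, gives $\|Jv-\pi d_i\delta_{a_i}\|_{W^{-1,1}(B(a_i,R))}\le r_a\le R/4$, which is enough Jacobian concentration to apply the standard Jerrard--Soner lower bound in each ball: with $\tilde\eps_i:=\eps/\eta(a_i)\le\tilde\eps$, one obtains $E_{\tilde\eps_i}(v;B(a_i,R))\ge \pi\log(R/\tilde\eps_i)-C\ge \pi|\log\tilde\eps|-C(|\log R|+1)$. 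Combining this with the pointwise Lipschitz comparison
\[
e_{\eps,\eta}(v)\ \ge\ \eta^2(a_i)\bigl(1-C\|\nabla\eta^2\|_\infty R\bigr)\,e_{\tilde\eps_i}(v)\qquad\text{on }B(a_i,R),
\]
which follows from $|\eta^2-\eta^2(a_i)|\le\|\nabla\eta^2\|_\infty R$, and the elementary check that $R+|\log R|/\logeps\le Cg(r_a)$ in either regime of the definition \eqref{eq:defg}, I obtain the weighted lower bound
\[
\Ee(v;B(a_i,R))\ \ge\ \pi\eta^2(a_i)|\log\tilde\eps|\ -\ C\logeps\, g(r_a).
\]

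Summing these lower bounds over $i$ and subtracting from the upper bound on $\Ee(v)$, using that $\logeps-|\log\tilde\eps|=\log\eta_{min}$ is $O(1)$ and $1/\logeps\le g(r_a)$, one deduces both
\[
\Ee\bigl(v;\Omega\setminus\cup_j B(a_j,R)\bigr)\ \le\ C\logeps(\Sigma_a+g(r_a))
\]
and
\[
\Ee(v;B(a_i,R))\ \le\ \pi\eta^2(a_i)|\log\tilde\eps|+C\logeps(\Sigma_a+g(r_a)).
\]
It remains to pass from $\Ee$ to $E_{\tilde\eps}$. Outside the balls, the pointwise estimate $e_{\tilde\eps}(v)\le e_{\eps,\eta}(v)/\eta_{min}^2$ (valid since $\eta\ge\eta_{min}$) converts the first inequality directly into the second line of \eqref{eq:usuelle}. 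Inside $B(a_i,R)$ the same crude conversion would spuriously inflate the leading term $\pi\eta^2(a_i)$ to $\pi\eta^2(a_i)/\eta_{min}^2$; instead I use the sharper localized bound $e_{\eps,\eta}(v)\ge\eta^2(a_i)(1-CR)\,e_{\tilde\eps_i}(v)\ge\eta^2(a_i)(1-CR)\,e_{\tilde\eps}(v)$ on $B(a_i,R)$ (the second inequality because $\tilde\eps_i\le\tilde\eps$ makes $e_{\tilde\eps_i}\ge e_{\tilde\eps}$ pointwise), which trades $\eta_{min}^2$ for $\eta^2(a_i)$ and cancels the leading term cleanly. Absorbing the residual factor $(1-CR)^{-1}-1\le CR\le Cg(r_a)$ into the error yields the first line of \eqref{eq:usuelle}.

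The main technical point is the weighted lower bound step, where one must simultaneously absorb (i) the Jerrard--Soner logarithmic deficit $|\log R|$ for a ball of radius $R$, (ii) the Jacobian localization error of size $r_a$, and (iii) the pointwise drift $\|\nabla\eta^2\|_\infty R$ of the weight, all within a single $O(\logeps\, g(r_a))$ term. The choice $R=4\max(r_a,\logeps^{-1})$ realizes the balance between (i) and (ii), while the definition of $g$ in \eqref{eq:defg} is tuned precisely so that both the regime $r_a\le 1/\logeps$ (where the Jerrard deficit contributes a $\log\logeps$) and the regime $r_a>1/\logeps$ (where the quantities $r_a\logeps$ and $|\log r_a|$ must coexist) are handled uniformly.
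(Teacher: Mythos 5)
Your proposal is correct and follows essentially the same route as the paper's proof: a Jacobian-based lower bound on each ball of radius $R=4\max(r_a,\logeps^{-1})$, a Lipschitz comparison of the weight to obtain $\Ee(v;B(a_i,R))\ge\pi\eta^2(a_i)\logeps-C\logeps\,g(r_a)$, subtraction from the global energy bound, and division by the local infimum of $\eta^2$ to return to the unweighted energy. The only (cosmetic) differences are that the paper keeps a free radius $r\in[r_a,\rho_a/8]$ and optimizes at the end, and compares directly to $E_{\tilde\eps}$ rather than passing through the intermediate scale $\tilde\eps_i=\eps/\eta(a_i)$.
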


\begin{proof}
Let $r\in [r_a,  \frac{\rho_{a}}{8}]$  be a number that will be fixed later. Then the balls
$\{B(a_i,4r)\}_{i=1}^l$ are disjoint and contained in $\Omega.$ Let $i\in
\{1,\cdots,l\},$ by monotonicity of the $W^{-1,1}$ norms with respect to the domain,
we deduce from \eqref{eq:localisee} that
$$
\| Jv - \pi d_i \delta_{a_i}\|_{W^{-1,1}(B(a_i,4r)} \leq r_a \le r.
$$
It follows from the lower bounds estimates of Jerrard \cite{Je} or Sandier
\cite{Sa} that 
\begin{equation}\label{eq:lowerbound}
E_\delta(v,B(a_i,4r)) \geq \pi \log \frac{4r}{\delta} - K_1,
\end{equation}
for every $\delta>0$, where $K_1$ is a universal constant. We next write
\begin{equation}\label{eq:borneinfeta}
\begin{split}
\Ee(v,B(a_i,4r)) &= \int_{B(a_i,4r)} \eta^2(x) \frac{|\nabla v|^2}{2}
+ \eta^4(x) \frac{(|v|^2-1)^2}{4\eps^2}\\
&\geq \int_{B(a_i,4r)}  \eta^2(x) \big[ \frac{|\nabla v|^2}{2} +
\frac{(|v|^2-1)^2}{4\left(\frac{\eps}{\eta_{min}}\right)^2}\big]\\
&\geq \left( \inf_{x\in B(a_i,4r)} \eta^2(x)\right)  E_{\tilde
\eps}(v,B(a_i,4r)).
\end{split}
\end{equation}
Therefore, from \eqref{eq:lowerbound} with the choice $\delta=\tilde\eps$, and noting that $|\log r| \ge \log |\frac{\rho_a}{8}| 
\ge \log 8 \ge 1$, we
obtain
\begin{equation}\label{eq:borneinfetabis}
\Ee(v,B(a_i,4r)) \geq \eta^2(a_i) \pi \logeps - K_2 \left( r\logeps + |\!\log r| \right),
\end{equation}
where $K_2$ depends only on $\|\nabla\eta^2\|_\infty$ and $\eta_{min}.$

On the other hand, we deduce from \eqref{eq:surplus} and
\eqref{eq:borneinfetabis} that
\begin{equation}\label{eq:bornesupeta}
\begin{split}
\Ee(v,B(a_i,4r)) &\leq \Ee(v,\Omega) - \sum_{j\neq i}
\Ee(v,B(a_j,4r))\\
&\leq \pi \eta^2(a_i)\logeps + \Sigma_a\logeps + (l-1)K_2  \left( r\logeps + |\!
\log r|\right).
\end{split}
\end{equation} 
Hence, going back to \eqref{eq:borneinfeta} we obtain
\begin{equation}\label{eq:onballi}
\begin{split}
E_{\tilde \eps}(v,B(a_i,4r)) &\leq \frac{1}{\left( \inf_{x\in B(a_i,4r)}
\eta^2(x)\right)}  \Ee(v,B(a_i,4r))\\
&\leq \pi |\!\log \tilde\eps| + K_3 (\Sigma_a \logeps + r\logeps + |\!\log r|),
\end{split}
\end{equation}
where $K_3$ depends only on $l$, $\|\nabla\eta^2\|_\infty$ and $\eta_{min}.$

Concerning the energy outside the balls $B(a_i,4r)$, we have from
\eqref{eq:surplus} and \eqref{eq:borneinfetabis} 
\begin{equation}\label{eq:out}
\begin{split}
\Ee(v,\Omega \setminus \cup_iB(a_i,4r)) &= \Ee(v,\Omega) - \sum_{i}
\Ee(v,B(a_j,4r))\\
&\leq \Sigma_a\logeps + lK_2  \left( r\logeps + |\! \log r| \right).
\end{split}
\end{equation} 
Hence,
\begin{equation}\label{eq:outsideballs}
\begin{split}
E_{\tilde \eps}(v,\Omega \setminus \cup_i B(a_i,4r)) &\leq \frac{1}{\inf
\eta^2}  \Ee(v,\Omega\setminus \cup_i B(a_i,4r))\\
&\leq K_4 (\Sigma_a \logeps + r\logeps + |\!\log r|),
\end{split}
\end{equation}
where $K_4$ depends only on $l$, $\|\nabla\eta^2\|_\infty$ and $\eta_{min}.$

The function $r\mapsto r + |\log r|/\logeps$ is minimized taking $r := \max(r_a,\frac{1}{\logeps})$, 
in which cas $r\leq \frac{\rho_{a}}{8}$ by assumption on $r_a$ and $\eps.$ The conclusions \eqref{eq:usuelle}
follow with the choice $C := \max(K_3,K_4).$
\end{proof}

\begin{rem}
If we define
$\tilde \Sigma_a :=  \frac{\Ee(v)}{\logeps} - \pi \sum_{i=1}^l \eta^2(a_i)$ ,
then  \eqref{eq:borneinfetabis} implies that 
\[
\tilde \Sigma_a  \ge  \sum_i  \left(\frac { E_{\eps,\eta}(v, B(a_j, 4r))}\logeps - \pi \eta^2(a_i)\right) 
\ge - l K_2(r + \frac {|\log r|}\logeps)
\]
for every $r\in [r_a, \frac {\rho_a}{8}]$. Choosing $r = \max(r_a,\frac{1}{\logeps})$ as above, 
we find that $\tilde \Sigma_a \ge  - l K_2 g(r_a)$. 
In particular, $\Sigma_a  =  (\tilde \Sigma_a)^+ \le \tilde \Sigma_a +  2 l K_2 g(r_a)$.
So all our estimates remain true if we replace  $C(\Sigma_a + g(r_a))$ by
$C(\tilde \Sigma_a + (2lK_2+1) g(r_a)).$ 
\end{rem}

%%%%%%%%%%%%%%%%%%%%%%%%%%%%%%%%%%%%%%%%%%%%%%%%%%%%%%%%%%%%%%%%%%%%%%%%%%
\section{Improved localization for Jacobians}\label{sect:imploc}
%%%%%%%%%%%%%%%%%%%%%%%%%%%%%%%%%%%%%%%%%%%%%%%%%%%%%%%%%%%%%%%%%%%%%%%%%%%

In this section, we prove that if the Jacobian of a function $v$ is known
to be sufficiently localized, then, provided the excess energy of $v$ with
respect to the points of localization is not to big, the localisation is
actually potentially much stronger.  A result in the same spirit  was obtained
by Jerrard and Spirn in \cite{JeSp0} for the Ginzburg-Landau functional without a
weight. Our proof here below makes a direct use of Theorem $1.1$ and Theorem $1.2'$ in \cite{JeSp0} by
relating the weighted and unweighted Ginzburg-Landau energies according to
Section \ref{sect:relating}. 

\begin{prop}\label{prop:loca}
Let $\Omega\subset \R^2$ be a bounded, open set, $\{a_i\}_{i=1}^l$ distinct points in
$\Omega$, $\{d_i\}_{i=1}^l \in \{\pm 1\},$ and $\eta : \Omega \to \R$ a positive
Lipschitz function such that $\inf_{\Omega}\eta =: \eta_{\min} >0$.   Set $\rho_a =
\min\big\{ \{ \frac 12d(a_i,a_j)\}_{\ i\neq j}\cup \{d(a_i,\partial \Omega)\}_{i} \cup \{1\}
\big\}.$    Let $\eps \leq \exp(-\frac{8}{\rho_a})$ and let $v\in \dot
H^1(\Omega,\C)$ be such that 
\begin{equation}\label{eq:surplus2}
\Sigma_a := \left( \frac{\Ee(v)}{\logeps} - \pi \sum_{i=1}^l \eta^2(a_i) \right)^+  < +\infty. 
\end{equation}
Also, assume that %$\rho_0$
%is a positive number such that $\rho_a \ge \rho_0$, and that
\begin{equation}\label{eq:localmoins}
r_a = \| J v - \pi \sum_{i=1}^l d_i \delta_{a_i}\|_{W^{-1,1}(\Omega)} \leq
\frac{\rho_a}{16}.
\end{equation}  

Then there exists  $C_1 \ge 1$, depending
only on a lower bound for $\rho_a$ and $\eta_{min}$ and on an upper bound for $l$ and $\|\nabla\eta^2\|_\infty$, 
 and for each $i\in\{1,\cdots,l\}$ there exists a point $\xi_i \in B(a_i,2r_a)$, 
such that
\begin{equation}\label{eq:localplus}
\| J v - \pi \sum_{i=1}^l d_i \delta_{\xi_i}\|_{W^{-1,1}(\Omega)}
\leq r_\xi \equiv r_\xi(\Sigma_a,r_a) \equiv \eps \exp(C_1(\Sigma_a +
g(r_a))\logeps).
\end{equation}
where $g$ is defined in Proposition \ref{prop:relate}.
\end{prop}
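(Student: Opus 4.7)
The plan is to reduce the claim to the improved localization result for the \emph{unweighted} Ginzburg--Landau Jacobian proved in \cite{JeSp0}, by using Proposition \ref{prop:relate} to transfer the weighted energy bounds into unweighted ones on a suitable family of balls. Recall that, under the standing hypotheses, Proposition \ref{prop:relate} produces a radius $R = 4\max(r_a,\logeps^{-1}) \le \rho_a/4$ and the parameter $\tilde\eps = \eps/\eta_{min}$ such that on each ball $B(a_i,R)$ the unweighted density satisfies
\[
\frac{E_{\tilde\eps}(v;B(a_i,R))}{|\!\log\tilde\eps|} \le \pi + C\bigl(\Sigma_a + g(r_a)\bigr),
\]
while the total unweighted energy outside the union of these balls is bounded by $C(\Sigma_a+g(r_a))|\!\log\tilde\eps|$. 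Moreover, since the balls $B(a_i,R)$ are pairwise disjoint and contained in $\Omega$, the restriction of $Jv$ to each $B(a_i,R)$ is close to $\pi d_i\delta_{a_i}$ with the same $W^{-1,1}$ error $r_a$.

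The next step is to apply Theorems 1.1 and 1.2$'$ of \cite{JeSp0} to each ball $B(a_i,R)$. Those results assert, for a map $v$ whose unweighted energy on a ball exceeds the single-vortex lower bound $\pi|\!\log\tilde\eps|$ by a small excess $\mu_i := E_{\tilde\eps}(v;B(a_i,R))/|\!\log\tilde\eps| - \pi$, and whose Jacobian is already known to be localized in $W^{-1,1}$ near a single point of multiplicity $\pi d_i$, that one can refine the localization to a nearby point $\xi_i$ with an $W^{-1,1}$ error of order $\tilde\eps \exp(C'\mu_i|\!\log\tilde\eps|)$. The proposition above gives $\mu_i \le C(\Sigma_a+g(r_a))$, so the refined error is bounded by
\[
\tilde\eps\exp\bigl(C''(\Sigma_a+g(r_a))|\!\log\tilde\eps|\bigr) \le \eps\exp\bigl(C_1(\Sigma_a+g(r_a))\logeps\bigr)
\]
for $C_1$ large enough (absorbing the factors $\eta_{min}$ and the difference between $|\!\log\tilde\eps|$ and $\logeps$). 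Writing $r_\xi$ for this quantity, we obtain $\|Jv-\pi d_i\delta_{\xi_i}\|_{W^{-1,1}(B(a_i,R))}\le r_\xi$ for every $i$.

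To patch the local statements into a global one, I would use that on $\Omega\setminus\bigcup_i B(a_i,R)$ the small unweighted energy bound, combined with the standard Jacobian estimate $\|Jv\|_{W^{-1,1}}\le C\eps E_{\tilde\eps}(v;\cdot)$ (valid when no vortices are present, i.e.\ when the energy density is everywhere subcritical), yields $\|Jv\|_{W^{-1,1}(\Omega\setminus\cup_i B(a_i,R))}\le r_\xi$ after possibly enlarging $C_1$. Summing over the disjoint balls and the complement and using additivity (up to a constant) of the $W^{-1,1}$ norm on disjoint domains with test functions extended by zero, one gets the global estimate \eqref{eq:localplus}. Finally, the bound $\xi_i\in B(a_i,2r_a)$ follows a posteriori: by the triangle inequality, $\|\pi d_i(\delta_{a_i}-\delta_{\xi_i})\|_{W^{-1,1}(\Omega)}\le r_a+r_\xi$, and Lemma \ref{lem:W-11} (or a direct test against $\varphi(x)=d_i\,\mathrm{dist}(x,\partial B(a_i,R))$) converts this into $\pi|a_i-\xi_i|\le r_a+r_\xi\le 2r_a$ once we have ensured $r_\xi\le (\pi-1)r_a$, which holds by shrinking $\gamma_0$ (implicit in the hypotheses of the proposition through $\Sigma_a+g(r_a)$ being small enough).

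The principal obstacle is verifying that the hypotheses of \cite{JeSp0} are genuinely met on each ball: one has to check both the single-vortex lower bound with the correct excess (supplied by Proposition \ref{prop:relate}) and an initial rough localization of $Jv$ to a single atom on each ball, and to do so with constants that depend only on the quantities allowed in the statement. The other delicate point, minor by comparison, is the proper bookkeeping between $\eps$ and $\tilde\eps$ in the exponent so that the final error takes exactly the form $\eps\exp(C_1(\Sigma_a+g(r_a))\logeps)$.
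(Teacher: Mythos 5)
Your overall strategy --- transfer the weighted energy bounds into unweighted ones via Proposition \ref{prop:relate} and then invoke Theorem $1.1$ and Theorem $1.2'$ of \cite{JeSp0} on the balls and on the complement --- is exactly the route the paper takes. However, two of the steps as you describe them do not go through.

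First, the patching. The $W^{-1,1}(\Omega)$ norm is not subadditive over a \emph{disjoint} partition of $\Omega$: a test function $\varphi\in W^{1,\infty}_0(\Omega)$ restricted to $B(a_i,R)$ does not vanish on $\partial B(a_i,R)$, so it is not admissible for the $W^{-1,1}(B(a_i,R))$ norm, and ``extending by zero'' destroys the Lipschitz bound. (Concretely, $\|\delta_a\|_{W^{-1,1}(V)}$ is tiny when $a$ is close to $\partial V$, while $\|\delta_a\|_{W^{-1,1}(\Omega)}$ can be of order one.) The fix is to arrange the local estimates on \emph{overlapping} sets --- the paper uses the balls $B(a_i,\rho_a/2)$ together with $\Omega\setminus\cup_i B(a_i,\rho_a/4)$ --- and to decompose the test function with a Lipschitz partition of unity $\chi_0,\ldots,\chi_l$ subordinate to that cover. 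This is also precisely where the dependence of $C_1$ on a lower bound for $\rho_a$ enters, through $\|\nabla\chi_i\|_\infty\le C\rho_a^{-1}$.

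Second, the location of the $\xi_i$. Your a posteriori argument needs $r_\xi\le(\pi-1)r_a$, but this can fail badly: one always has $r_\xi\ge\eps\logeps$ (see \eqref{C0gtr1}, a consequence of $g(r)\ge \log\logeps/\logeps$ and $C_1\ge 1$), whereas $r_a$ can be arbitrarily small, even zero. Moreover, there is no parameter $\gamma_0$ to shrink in the hypotheses of this proposition --- no smallness of $\Sigma_a+g(r_a)$ is assumed. The triangle-inequality route only yields $\sum_i|a_i-\xi_i|\le\frac1\pi(r_a+r_\xi)$, which is \eqref{eq:aminusxi} and is in general weaker than $\xi_i\in B(a_i,2r_a)$. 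The containment $\xi_i\in B(a_i,2r_a)$ must instead be read off from the conclusion of Theorem $1.2'$ of \cite{JeSp0} itself, which locates the refined vortex point within $2r_a$ of $a_i$ as part of its statement. A minor further point: off the balls you should use the unconditional bound $\|Jv\|_{W^{-1,1}(V)}\le C\tilde\eps\, E_{\tilde\eps}(v,V)\exp\left( E_{\tilde\eps}(v,V)/\pi\right)$ from Theorem $1.1$ of \cite{JeSp0}; the linear bound ``valid when no vortices are present'' cannot be verified a priori there, though the exponential factor is harmless since it is absorbed into the definition of $r_\xi$.
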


\begin{rem}
Note that Lemma \ref{lem:W-11} and \eqref{eq:localmoins}, \eqref{eq:localplus} imply that
\begin{equation}\label{eq:aminusxi}
\sum_{i=1}^l |a_i - \xi_i| \le \frac 1\pi(r_a + r_\xi).
\end{equation}
\end{rem}

\begin{rem}
Since $g(r) \ge \frac {\log\logeps}\logeps$ for every $r$, our requirement that $C_1\ge 1$
implies that
\begin{equation}\label{C0gtr1}
r_\xi \ge \eps\logeps.
\end{equation}
\end{rem}

As mentioned, the proof of Proposition \ref{prop:loca} relies very heavily on estimates from \cite{JeSp0}. Following the proof, we discuss some small adjustments we have made in employing these estimates here. Also, from here upon in many places we will denote by $C$ constants whose actual value may change from
line to line but which could eventually be given a common value depending only on $l$, $\rho_{min}$, $\eta_{min}$ and $\|\nabla \eta^2\|_\infty.$

\begin{proof}
Since $\eps  \leq \exp(-\frac{8}{\rho_a})$, our assumptions imply that the hypotheses of Proposition \ref{prop:relate} are verified. Then, since
$B(a_i, \frac {\rho_a}2) \subset B(a_i, R) \cup (\Omega\setminus \cup_{i=1}^l B(a_i,R))$
for any $R<\frac {\rho_a}2$, and recalling that $g(r) \ge \frac { \log\logeps}{\logeps}$ for all $r$,
we deduce from \eqref{eq:usuelle} that
\begin{equation}\label{eq:usualb}
\frac{E_{\tilde \eps}(v; B(a_i, \frac{\rho_a}2 ))}{\!\log(\frac{\rho_a}{2\tilde\eps})} 
\leq 
\frac{E_{\tilde \eps}(v; B(a_i, \frac{\rho_a}2 ))}{|\!\log \tilde\eps |} 
(1+ 2 \frac{ |\log \frac{\rho_a}2|}{|\!\log\tilde\eps|})
\le
\pi  + C(\Sigma_a +g(r_a))
\end{equation}
for $i=1,\ldots, l$, and similarly \eqref{eq:usuelle} implies that 
\begin{equation}\label{eq:usualc}
\frac{E_{\tilde \eps}(v; \Omega \setminus \cup_{i=1}^l B(a_i, \frac{\rho_a}4 ))}{|\!\log\tilde\eps|} \leq  C(\Sigma_a + g(r_a)).
\end{equation}
According to Theorem 1.2' in \cite{JeSp0},
it follows from \eqref{eq:localmoins} and \eqref{eq:usualb} that for every $i\in \{1,\ldots,l\}$, there
exists some $\xi_i\in B(a_i, 2 r_a)$ such that
\begin{equation}\label{eq:findxi}
\| Jv - \pi d_i \delta_{\xi_i} \|_{W^{-1,1}(B(a_i, \frac{\rho_a}2))} \le 
C \,\tilde \eps \exp[C (\Sigma_a + g(r_a))\logeps].
\end{equation}
In addition,  Theorem 1.1 in \cite{JeSp0} implies that
if $V$ is any bounded, open subset of $\Omega$ 
%such that $\frac{E_{\tilde\eps}(v, V)}{|\!\log\tilde \eps| }\le \frac \pi 2$, 
then
\begin{equation}\label{eq:citeJeSp}
\|Jv\|_{W^{-1,1}(V)} 
\ \le \  C \, \tilde\eps \, E_{\tilde\eps}(v, V)  \exp\left( \frac{ E_{\tilde\eps}(v, V)}{\pi}\right) 
\ \le \ 
C \, \tilde\eps \,  \exp\left(  E_{\tilde\eps}(v, V) \right) .
\end{equation}
In particular, this and \eqref{eq:usualc} imply that
\begin{equation}\label{eq:complement}
	\| J v \|_{W^{-1,1}(\Omega\setminus \cup_{i=1}^l B(a_i \frac{ \rho_a}4))} \le 
	C \,\tilde \eps \exp[C (\Sigma_a + g(r_a))\logeps].
\end{equation}
Now for $i\in \{1,\ldots,l\}$, let $\chi_i\in C^\infty_c(B(a_i, \frac {\rho_a}2))$ be functions
such that $\chi_i = 1$ on $B(a_i \frac{\rho_a}4)$, $0\le \chi_i\le 1$, 
and $\|\nabla \chi_i\|_\infty  \le C\rho_a^{-1}$. 
Also, let $\chi_0 = 1-\sum_{i=1}^l\chi_i$. 
Then for any $\varphi\in C^\infty_0(\Omega)$,
such that $\| \varphi\|_{W^{1,\infty}(\Omega)}\le 1$, 
\begin{align*}
\langle \varphi, Jv - \pi \sum_{i=1}^l d_i \delta_{\xi_i} \rangle
&=
\sum_{j=0}^l
\langle \chi_j \varphi, Jv - \pi \sum_{i=1}^l d_i \delta_{\xi_i} \rangle
\\
&= \langle \chi_0 \varphi, Jv \rangle  +
\sum_{i=1}^l \langle \chi_i \varphi, Jv - \pi d_i \delta_{\xi_i} \rangle
\\
&\le \sum_{i=1}^l \|\chi_i \varphi \|_{W^{1,\infty}} C \eps \exp[ C (\Sigma_a+g(r_a))\logeps)
\end{align*}
where we have used \eqref{eq:findxi} for $i=1,\ldots, l$ and \eqref{eq:complement} for $i=0$.
Thus 
\[
\langle \varphi, Jv - \pi \sum_{i=1}^l d_i \delta_{\xi_i} \rangle
\le 
\frac C{\rho_a} \eps \exp[ C (\Sigma_a+g(r_a))\logeps]\\
\le 
\eps \exp[ C_1 (\Sigma_a+g(r_a))\logeps]
\]
for a suitable $C_1$, depending on the lower bound $\rho_0$ for $\rho_a$ as well as $l, \eta_{min}, \|\nabla\eta^2\|_\infty$.
This implies \eqref{eq:localplus}.
\end{proof}

To facilitate comparison between some facts that we have used above
and the precise statements in \cite{JeSp0}, we make the following remarks.

First, we have used some estimates in cruder but simpler forms than they appear in \cite{JeSp0}.
For example, on the right-hand side of \eqref{eq:findxi}, 
we have replaced an expressions of the form $(C + K_0)^2 \exp( \frac {K_0} \pi)$ from
\cite{JeSp0}, where here we take $K_0 = C(\Sigma_a+ g(r_a))\logeps$, by the simpler expression $C \exp( K_0)$.
We have also used the fact that $K_0 = C(\Sigma_a+ g(r_a))\logeps \ge \log\logeps$
to allow us to absorb some lower-order terms from \cite{JeSp0}.

Second, estimates in \cite{JeSp0} are stated in terms of a slightly different norm,
$\| \mu \|_{\dot W^{-1.1}(V)} :=
\sup\{ \langle \mu, \phi \rangle : \phi\in C^\infty_c(V), \|\nabla \phi\|_\infty \le 1 \}$.
This does not cause any problems for us, since clearly $\|\mu\|_{W^{-1,1}(V)} \le \|\mu \|_{\dot W^{-1,1}(V)}$.

Finally, the estimate corresponding to \eqref{eq:citeJeSp} in \cite{JeSp0} is a special case of a more general result, and 
as stated there requires the additional assumption that  $\frac{E_{\tilde\eps}(v, V)}{|\!\log\tilde \eps| }
<\pi$. However,  since $\| J v\|_{W^{-1,1}(V)}\le \|Jv\|_{L^1(V)} \le 2 E_{\tilde \eps}(v; V)$,
it is clear that \eqref{eq:citeJeSp} is still true if $\frac{E_{\tilde\eps}(v, V)}{|\!\log\tilde \eps| }
\ge \pi$.

\begin{rem} If $\Omega$ is simply connected, then we can alternately argue by
citing a result  from \cite{JeSp} to obtain an estimate of the form
\eqref{eq:localplus} with $C_1$ {\em independent of $\rho_a$}, at the rather small 
expense of having to replace $\frac {\rho_a}{16}$ on the right-hand side of \eqref{eq:localmoins}
by some smaller quantity depending on $l$ as well as $\rho_a$. This is in principle useful if one wants to consider large numbers of vortices. The relevant result (Theorem 3)
from \cite{JeSp} is proved using facts from \cite{JeSp0}, as in the proof above, 
but combining estimates
on the balls and away from the balls in a more  careful way, to  avoid introducing 
the factors of $\rho_a^{-1}$ that arise from the cutoff functions that we have employed here.

The proof of Theorem 3 from \cite{JeSp} can surely be adapted to yield a similar result without the assumption that $\Omega$ be 
simply connected, but since the proof is slightly complicated, we prefer not to tinker with it here.
\end{rem}

\section{Across the core approximation by reference field}\label{sect:jstar}
In this section we prove
\begin{prop}\label{prop:approx}
Let $\Omega\subset \R^2$ be an open set, $\{\xi_i\}_{i=1}^l$ distinct points in
$\Omega$, $\{d_i\}_{i=1}^l \in \{\pm 1\},$ and $\eta : \Omega \to \R$ a positive
Lipschitz function such that $\inf_{\Omega}\eta =: \eta_{\min} >0.$  Set $\rho_\xi =
\min\big\{\{\frac 12 d(
\xi_i,\xi_j)\}_{i\neq j}\cup \{d(\xi_i,\partial \Omega)\}_{i} \cup \{1\}\big\}.$ 
  Let $\eps \leq \exp(-\frac{8}{\rho_\xi})$ and let $v\in \dot
H^1(\Omega,\C)$ be such that
\begin{equation}\label{eq:surplusbis}
\Sigma_\xi = \left( \frac{\Ee(v)}{\logeps} - \pi \sum_{i=1}^l \eta^2(\xi_i) \right)^+ <
+\infty
\end{equation}
and
\begin{equation}\label{eq:localiseebis}
\| J v - \pi \sum_{i=1}^l d_i \delta_{\xi_i}\|_{W^{-1,1}(\Omega)}
\leq r_\xi \equiv \eps\exp(K\logeps)  = \eps^{1-K}  % \leq \frac{\rho_\xi}{8},
\end{equation}     
for some $K\leq \frac{1}{2}.$ 

Define $j_* = j_*(\{\xi_i\}, r_\xi)$ in $\Omega$ by 
$$
j_*(x)  =\left\{\begin{array}{ll}
d_i \frac{(x-\xi_i)^\perp}{\max (r_\xi, |x-\xi _i|)^2 } &\text{if } x \in B(\xi_i,\frac{1}{\logeps})\\
0 &\text{if } x \in \Omega \setminus \cup_{i=1}^l B(\xi_i,\frac{1}{\logeps})
\end{array}\right. 
$$
where  $(y_1, y_2)^\perp := (-y_2, y_1)$.
Then
\begin{equation}\label{eq:estimparsurplus}
E_{\eps,\eta}(|v|) + \frac{1}{2}\int_\Omega \eta^2 \left|
\frac{j(v)}{|v|}- j_*\right|^2 \leq 
   (C \Sigma_\xi + K)\logeps +
C\log\logeps, 
\end{equation}
and
\begin{equation}\label{eq:JminusJstar}
\| \nabla\times (j(v) - j_*) \|_{W^{-1,1}} \le C_3 r_\xi.
\end{equation}
where the constant $C$ depends only on $l,$ $\|\nabla \eta^2\|_\infty$ and  
$\eta_{min}.$
\end{prop}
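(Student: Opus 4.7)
The plan is to start from the pointwise identity $|\nabla v|^2 = |\nabla|v||^2 + |j(v)/|v||^2$ (interpreted to vanish on $\{v=0\}$), which yields $E_{\eps,\eta}(v) = E_{\eps,\eta}(|v|) + \tfrac12\int\eta^2|j(v)/|v||^2$. Expanding the square in $|j(v)/|v| - j_*|^2$ then produces the algebraic identity
\begin{equation*}
E_{\eps,\eta}(|v|) + \tfrac12\int\eta^2 \Bigl|\tfrac{j(v)}{|v|} - j_*\Bigr|^2 \;=\; E_{\eps,\eta}(v) - \int\eta^2 \tfrac{j(v)}{|v|}\cdot j_* + \tfrac12\int\eta^2|j_*|^2,
\end{equation*}
so \eqref{eq:estimparsurplus} reduces to bounding the right-hand side. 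The first term is at most $(\Sigma_\xi + \pi\sum_i\eta^2(\xi_i))\logeps$ by \eqref{eq:surplusbis}. A direct computation in polar coordinates around each $\xi_i$, combined with $|\eta^2(x)-\eta^2(\xi_i)|\le \|\nabla\eta^2\|_\infty/\logeps$ on $B(\xi_i,1/\logeps)$, gives $\tfrac12\int\eta^2|j_*|^2 = \pi(1-K)\logeps\sum_i\eta^2(\xi_i) + O(\log\logeps)$, the factor $1-K$ arising from $\log(1/r_\xi)=(1-K)\logeps$ and the logarithmic integral $\int_{r_\xi}^{1/\logeps}r^{-1}\,dr$.

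The heart of the argument is a matching lower bound for the cross term, of the form $\int\eta^2(j(v)/|v|)\cdot j_* \ge 2\pi(1-K)\logeps\sum_i\eta^2(\xi_i) - O(\Sigma_\xi\logeps + \log\logeps)$. I would establish this by integration by parts using that $j_* = \nabla^\perp H_*$ piecewise, with $H_*^{(i)} = d_i\log|x-\xi_i|$ on the annulus $r_\xi\le |x-\xi_i|\le 1/\logeps$ and $H_*^{(i)} = d_i|x-\xi_i|^2/(2r_\xi^2)$ on the inner disk (up to an additive constant matching the two pieces on $|x-\xi_i|=r_\xi$). Combining $\mathrm{curl}(\eta^2 F) = \eta^2\,\mathrm{curl}\,F + \nabla\eta^2\times F$ with $\mathrm{curl}\,j(v) = 2Jv$ and the delta-approximation from \eqref{eq:localiseebis} transforms the cross term into the pairing of $2\pi d_i\delta_{\xi_i}$ with $H_*^{(i)}$, whose logarithmic singularity at $\xi_i$ supplies the leading $2\pi(1-K)\logeps\sum_i\eta^2(\xi_i)$ term. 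The residual commutator-type errors (involving $\nabla\eta^2$, and boundary contributions where the two pieces of $H_*$ meet or where $j_*$ is truncated at $|x-\xi_i|=1/\logeps$) are bounded by Cauchy-Schwarz against $\sqrt{\int\eta^2|j(v)/|v|-j_*|^2}\,\sqrt{\int\eta^2|j_*|^2}$ and absorbed back into the left-hand side via a bootstrap.

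For \eqref{eq:JminusJstar}, I would compute $\nabla\times j_*$ distributionally — it has a volume part $\sum_i 2d_i r_\xi^{-2}\mathbf{1}_{B(\xi_i,r_\xi)}$ and a tangential-jump line part on $\bigcup_i\partial B(\xi_i,1/\logeps)$ — and compare it to $\nabla\times j(v) = 2Jv$. Pairing the difference with $\varphi\in W^{1,\infty}_0(\Omega)$ of unit norm and expanding $\varphi$ to first order around each $\xi_i$, the leading $2\pi\sum d_i\varphi(\xi_i)$ contributions from $2Jv$ (via \eqref{eq:localiseebis}, up to $O(r_\xi)$) and from the volume and line pieces of $\nabla\times j_*$ cancel to leading order, leaving only Taylor-type remainders of size $O(r_\xi)$.

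The main obstacle is the cross-term lower bound: closing the bootstrap so that the Cauchy-Schwarz errors can actually be absorbed, and carefully tracking the interplay between the $\nabla\eta^2$ error terms and the logarithmic potentials of $j_*$ near each vortex core. Because $H_*^{(i)}$ is of size $\logeps$ at the inner scale, any imprecise handling of the $\nabla\eta^2 \times (j(v)/|v|)$ commutator risks producing an uncontrollable $\logeps$-sized error, so the Lipschitz regularity of $\eta^2$ (rather than mere boundedness) is essential here.
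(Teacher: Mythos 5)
Your treatment of \eqref{eq:estimparsurplus} is in substance the paper's proof, reorganized. The paper also starts from $|\nabla v|^2=|\nabla |v||^2+|j(v)/|v||^2$, computes $\frac12\int\eta^2|j_*|^2=\pi(1-K)\logeps\sum_i\eta^2(\xi_i)+O(\log\logeps)$, and controls the cross term by (i) freezing $\eta^2$ at $\eta^2(\xi_i)$ on each ball $B(\xi_i,4\logeps^{-1})$, the commutator with $\nabla\eta^2$ costing only $C\logeps^{-1}(\|\nabla v\|_2^2+\|j_*\|_2^2)=O(\Sigma_\xi\logeps+\log\logeps)$ --- so the $\logeps$-sized loss you fear from this commutator does not occur, precisely because the balls have radius $O(\logeps^{-1})$; (ii) replacing $j(v)/|v|$ by $j(v)$ at cost $\|j_*\|_\infty\,\eps E_\eps(v)$; and (iii) pairing the Lipschitz potential $h$ of $j_*$ (your $H_*$) against $\nabla\times(j(v)-j_*)$. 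The only real difference is bookkeeping: the paper bounds $|\int\eta^2(j(v)/|v|-j_*)\cdot j_*|$ by $O(\Sigma_\xi\logeps+\log\logeps)$ outright, so no leading-order cancellation and no absorption/bootstrap are needed, whereas you extract the leading term $2\pi(1-K)\logeps\sum_i\eta^2(\xi_i)$ from $\langle H_*,2Jv\rangle$ and must then cancel it exactly against $\int\eta^2|j_*|^2$. Both work, but the paper's arrangement is safer: the smallness there comes from $\|\nabla h\|_\infty\cdot\|\nabla\times(j(v)-j_*)\|_{W^{-1,1}}=O(1)$ rather than from a cancellation of two $\logeps$-sized quantities.

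The genuine gap is in \eqref{eq:JminusJstar}. You correctly observe that $\nabla\times j_*$ has, besides the volume part $\sum_i 2d_i r_\xi^{-2}\mathbf 1_{B(\xi_i,r_\xi)}$, a line part on $\bigcup_i\partial B(\xi_i,\logeps^{-1})$ coming from the tangential jump of $j_*$ there; but that jump has size $\logeps$, so the line part carries total mass $-2\pi d_i$ on each circle of radius $\logeps^{-1}$, and replacing it by $-2\pi d_i\delta_{\xi_i}$ costs $O(\logeps^{-1})$ in $W^{-1,1}$, not $O(r_\xi)$: testing against $\varphi(x)=\min(|x-\xi_1|,\rho_\xi/2)$ (suitably cut off) gives $|\langle\varphi,\nabla\times(j(v)-j_*)\rangle|\gtrsim \logeps^{-1}\gg r_\xi$. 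No Taylor cancellation rescues this, so your argument cannot yield the bound $C_3 r_\xi$; note that the displayed computation in the paper's own Step~1 silently discards this line part as well. What does hold, and what is all that is used in proving \eqref{eq:estimparsurplus}, is the estimate $|\langle\zeta,\nabla\times(j(v)-j_*)\rangle|\le C\|\zeta\|_{W^{1,\infty}}\,r_\xi$ for test functions $\zeta$ vanishing on the circles $\partial B(\xi_i,\logeps^{-1})$ (such as $h=H_*$ itself); if you state and prove \eqref{eq:JminusJstar} in that restricted form, the rest of your argument goes through.
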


Since $K\le \frac 12$, the assumption that $\eps< \exp(- 8/\rho_\xi)$ implies that
$r_\xi < \frac 1\logeps < \frac {\rho_\xi}8$. In particular, the
balls $B(\xi_i, \logeps^{-1})$, $i=1,\ldots, l$ are pairwise disjoint and contained in $\Omega$.

\begin{proof}
We will use more than once the fact that
\begin{equation}\label{gradsplit}
|\nabla v|^2 = |\nabla |v|\,|^2 + \left| \frac {j(v)|}{|v|}\right|^2.
\end{equation}

{\bf Step 1: verification of \eqref{eq:JminusJstar}}.
A direct calculation, using the definition of $j_*$, shows that for any smooth $\varphi$, 
\[
\langle \varphi , \nabla\times j_* - 2 \pi \sum d_i \delta_{\xi_i} \rangle
= \sum_{i=1}^l \frac 2{r_\xi^2} \int_{B(\xi_i, r_\xi)} d_i(\varphi(x)- \varphi(\xi_i)) \ \le \,C\, l \|\nabla \varphi\|_\infty r_\xi.
\]
Thus $\| \nabla\times j_* - 2 \pi \sum d_i \delta_{\xi_i} \|_{W^{-1,1}(\Omega)} \le C r_\xi$.
Recalling that $Jv = \frac 12 \nabla \times j(v)$, we deduce \eqref{eq:JminusJstar}
from this estimate and our assumption \eqref{eq:localiseebis}.

It remains to prove \eqref{eq:estimparsurplus}.

{\bf Step 2: decomposing the energy.}
Note that our assumptions \eqref{eq:surplusbis}, \eqref{eq:localiseebis} about the points
$\{ \xi_i\}_{i=1}^l$ are exactly the same as the hypotheses \eqref{eq:surplus}, \eqref{eq:localisee}
about the points $\{ a_i\}_{i=1}^l$ in Proposition \ref{prop:relate}, except that here we impose
an additional smallness condition on $r_\xi$. Thus estimates from Proposition \ref{prop:relate}
are all available here. In particular, recalling \eqref{eq:out} with the choice $r = \max(r_\xi, \frac 1\logeps) = \frac 1\logeps$, we see that
\[
E_{\eps,\eta}(v, \Omega\setminus \cup_i B(\xi_i, \frac 4 \logeps)  ) \le 
C(\Sigma_\xi \logeps +  \log\logeps).
\]
In view of \eqref{gradsplit}, and noting that $j_*$ is supported in $\cup_i B(\xi_i, \frac 1 \logeps)$  
to prove \eqref{eq:estimparsurplus} it therefore suffices to show that
\begin{equation}
%E \bigl( |v|;  B(a_i, \frac 4 \logeps)\bigr) + 
E_{\eps,\eta}\bigl(|v|, B(\xi_i, \frac 4\logeps)\bigr)+ 
\frac 12 \int_{B(\xi_i, \frac 4\logeps)}\eta^2  \left| \frac{j(v)}{|v|} - j_*\right|^2 
\ \le \ 
(C \Sigma_\xi + K)\logeps +
C\log\logeps
\label{eq:jstar.reduce}\end{equation}
for $i=1,\ldots, l$.
Toward this end, note that
\begin{align*}
 \logeps\left[\pi \eta^2(\xi_i) +\Sigma_\xi + C \frac{ \log\logeps}\logeps \right]
&\overset{\eqref{eq:bornesupeta}} 
\ge 
E_{\eps,\eta}(v, B(\xi_i, \frac 4\logeps) )\\
&\overset{\eqref{gradsplit}}=
E_{\eps,\eta}\bigl(|v|, B(\xi_i, \frac 4\logeps)\bigr)+ 
\frac 12 \int_{B(\xi_i, \frac 4\logeps)}\!\!\!\!\eta^2  \left| \frac{j(v)}{|v|} - j_*\right|^2 
\\&\quad\quad
+\frac 12 \int_{B(\xi_i, \frac 4\logeps)}\eta^2  \left|  j_*\right|^2
+\int_{B(\xi_i, \frac 4\logeps)}\!\!\!\!\! \eta^2 (\frac {j(v)}{|v|} - j_*)\cdot j_*.
%+ \frac 12 {j(v}{|v} - j_*|) \eta^2 |j_*|^2
\end{align*}
Using the explicit form of $j_*$ and of $r_\xi$,
\begin{align*}
\frac 12 \int_{B(\xi_i, \frac 4\logeps)}\eta^2  \left|  j_*\right|^2
&\ge \frac 12 \ \min_{B(\xi_i, 4\logeps^{-1})}\eta^2 \ 
\int_{B(\xi_i, \frac 4\logeps)}  \left|  j_*\right|^2 \\
&\ge 
\bigl( \eta^2(\xi_i) - C\|\nabla\eta^2\|_\infty \logeps^{-1} \bigr) 
\pi \log \frac {\logeps^{-1}}{r_\xi}\\
&= 
\bigl(\pi \eta^2(\xi_i) - C\frac {\log\logeps}\logeps \bigr) \logeps (1-K).
\end{align*}
By combining the previous two inequalities and rearranging, we see that to prove \eqref{eq:jstar.reduce},
it suffices to check that
\begin{equation}
\left|
\int_{B(\xi_i, \frac 4\logeps)} \eta^2 (\frac {j(v)}{|v|} - j_*)\cdot j_*
\right| \le C \bigl( \Sigma_\xi \logeps +
\log\logeps\bigr).
\label{eq:jstar.reduce2}\end{equation}

{\bf Step 3: proof of \eqref{eq:jstar.reduce2}.}

First note that
\begin{align*}
\int_{B(\xi_i, \frac 4\logeps)} \eta^2 (\frac {j(v)}{|v|} - j_*)\cdot j_*
&=
\int_{B(\xi_i, \frac 4\logeps)}\bigl(\eta^2(x) - \eta^2(\xi_i)\bigr)(\frac {j(v)}{|v|} - j_*)\cdot j_*
\\ 
& \quad
+
\eta^2(\xi_i) \int_{B(\xi_i, \frac 4\logeps)}
 \frac {j(v)}{|v|} \cdot j_*\ 
 \left(1-   |v| \right)
\\
& \quad
+
\eta^2(\xi_i) \int_{B(\xi_i, \frac 4\logeps)}
 ({j(v)} - j_*) \cdot j_*\ . 
\end{align*}
We estimate the three terms on the right-hand side in turn. First,
\begin{align*}
|\int_{B(\xi_i, \frac 4\logeps)}\bigl(\eta^2(x) - \eta^2(\xi_i)\bigr)(\frac {j(v)}{|v|} - j_*)\cdot j_*| \ 
& \le \ 
\frac C \logeps \| \nabla \eta^2\|_\infty ( \| \nabla v\|_2^2 + \| j_*\|_2^2)
\\
& \le \  C\left[  \frac{E_{\eps,\eta}(v) }\logeps+ (1- K +  \logeps^{-1}) \right]\\
\\
&\le
C(\Sigma_\xi + \log\logeps),
\end{align*}
where we have used the fact that
$\logeps^{-1}E_{\eps,\eta}(v) 
\overset{\eqref{eq:surplusbis}}\le C(\Sigma_\xi + l \pi \|\eta^2\|_\infty) \le C(\Sigma_\xi + \log\logeps)$.

Next,
\begin{align*}
\eta^2(\xi_i) |\int_{B(\xi_i, \frac 4\logeps)}
 \frac {j(v)}{|v|} \cdot j_*\  \left(1-   |v| \right)|
&\le 
C  \|j_*\|_\infty \int_{B(\xi_i, \frac 4\logeps)} \frac \eps 2 |\nabla v|^2 + \frac 1 {2\eps} (|v|^2-1)^2\\
&\le 
 C r_\xi^{-1}\eps E_{\eps,\eta}(v) 
\\
&\le
C(\Sigma_\xi + \log\logeps),
\end{align*}
using the lower bound \eqref{C0gtr1} for $r_\xi$ and arguing as above.

To estimate the final term, note that $j_* = \nabla^\perp h$, for 
\[
h(x):= \begin{cases}
d_i \big[ \frac 1{2r_\xi^2}( |x-\xi_i|^2 - 1)  +\log(r_\xi \logeps)\big] 
&\mbox{ if }|x-\xi_i|\le r_\xi\\
d_i \log (|x-\xi_i| \,\logeps)&\mbox{ if }r_\xi \le |x-\xi_i| \le \logeps^{-1}\\[3pt]
0&\mbox{ if }x\not\in \cup_i B(\xi_i, \logeps^{-1}).
\end{cases}
\]
Thus, we can integrate by parts to find that
\begin{align*}
|\int_{B(\xi_i, \frac 4\logeps)}
({j(v)} - j_*) \cdot j_*\ |
&=
|\int_{B(\xi_i, \frac 4\logeps)}
h\ \nabla \times ({j(v)} - j_*)  | 
\\
&\le\max( \|h \|_{\infty} , \|\nabla  h\|_\infty )\ \|  {j(v)} - j_* \|_{W^{-1,1}} \\
&\le C,
\end{align*}
after using \eqref{eq:JminusJstar} and noting that
$\|\nabla h\|_\infty = \|j\|_\infty = r_\xi^{-1}$. This completes the proof. 
\end{proof}

It follows from
the definitions \eqref{eq:surplus2} and \eqref{eq:surplusbis} of $\Sigma_\xi$ and $\Sigma_a$, together with \eqref{eq:aminusxi},
that
\[
\Sigma_\xi \le \Sigma_a + C g(r_a)  
\]
for $C$ depending only on $\|\nabla \eta^2\|_\infty$.
Combining this with Propositions \ref{prop:loca} and  \ref{prop:approx},  we immediately obtain

\begin{cor}\label{cor:proche}Under the assumptions of Proposition \ref{prop:loca}
\begin{equation}\label{eq:estimparsurplusbis}
E_{\eps,\eta}(|v|) + \frac{1}{2}\int_\Omega \eta^2 \left|
\frac{j(v)}{|v|}- j_*\right|^2 \leq C (\Sigma_a +g(r_a))\logeps, 
\end{equation}
where $j_*=j_*(\{\xi_i\},r_\xi)$, the points $\{\xi_i\}_{i=1}^l$ are given by Proposition \ref{prop:loca},
and the constant $C$ depends only on $l,$ $\rho_a$, $\|\nabla \eta^2\|_\infty$ and $\eta_{\min}.$
\end{cor}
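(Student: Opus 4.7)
The plan is to deduce the corollary by chaining Propositions \ref{prop:loca} and \ref{prop:approx} together through the intermediate points $\{\xi_i\}_{i=1}^l$ produced by Proposition \ref{prop:loca}, using the one-line comparison $\Sigma_\xi \le \Sigma_a + C\,g(r_a)$ that the authors have already inserted just before the statement of the corollary. First I would apply Proposition \ref{prop:loca} to obtain the points $\xi_i$, the localization scale $r_\xi = \eps\exp(K\logeps)$ with $K = C_1(\Sigma_a+g(r_a))$, and the distance estimate \eqref{eq:aminusxi}. After possibly shrinking an a priori smallness constant on $\Sigma_a+g(r_a)$ (which may be assumed without loss of generality, since otherwise the left-hand side of \eqref{eq:estimparsurplusbis} is dominated by the total energy $E_{\eps,\eta}(v)\le \logeps(\Sigma_a+l\pi\|\eta\|_\infty^2)$ and the estimate is trivial) we may assume $K\le \tfrac 12$.

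Next I would verify the bound $\Sigma_\xi \le \Sigma_a + Cg(r_a)$ from the definitions \eqref{eq:surplus2}, \eqref{eq:surplusbis}: writing
$$
\Sigma_\xi - \Sigma_a \ \le\ \pi \sum_{i=1}^l |\eta^2(a_i)-\eta^2(\xi_i)|\ \le \ \pi \|\nabla \eta^2\|_\infty \sum_i |a_i-\xi_i|\ \le \ \|\nabla\eta^2\|_\infty (r_a+r_\xi),
$$
using \eqref{eq:aminusxi}, and then noting that $r_\xi\le r_a + r_\xi \le Cg(r_a)$ once the smallness condition is in force. A small check is also required to see that $\rho_\xi$, the analogue of $\rho_a$ for the $\xi_i$'s, is comparable to $\rho_a$; this uses once again \eqref{eq:aminusxi} together with $r_a \le \rho_a/16$, so that each $\xi_i$ lies within $\rho_a/8$ of $a_i$. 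Consequently the smallness hypothesis $\eps \le \exp(-8/\rho_\xi)$ needed in Proposition \ref{prop:approx} follows from the corresponding hypothesis on $\rho_a$ up to a harmless constant.

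With the hypotheses of Proposition \ref{prop:approx} verified for the points $\xi_i$, the localization scale $r_\xi$ and the exponent $K\le 1/2$, I would invoke estimate \eqref{eq:estimparsurplus} to obtain
$$
E_{\eps,\eta}(|v|) + \frac 12\int_\Omega \eta^2\Bigl|\frac{j(v)}{|v|} - j_*\Bigr|^2 \ \le \ (C\Sigma_\xi + K)\logeps + C\log\logeps.
$$
Substituting $\Sigma_\xi\le \Sigma_a + Cg(r_a)$ and $K = C_1(\Sigma_a+g(r_a))$ bounds the main term by $C(\Sigma_a + g(r_a))\logeps$, and using the fact recorded in the paper that $g(r)\ge \log\logeps/\logeps$ for every $r$, the error term satisfies $C\log\logeps \le C\,g(r_a)\logeps$ and is absorbed. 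This delivers \eqref{eq:estimparsurplusbis}.

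I do not anticipate a serious obstacle: all quantitative inputs are present, and the only bookkeeping is to ensure that $\rho_\xi \asymp \rho_a$ and that $K\le 1/2$ in the regime of interest. The closest thing to a subtlety is the trivial observation about what to do in the non-small regime; but this is readily dispatched by comparing both sides of \eqref{eq:estimparsurplusbis} to the full energy, which is already controlled by $\Sigma_a + l\pi\|\eta\|_\infty^2$.
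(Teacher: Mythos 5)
Your proposal is correct and follows exactly the paper's route: the corollary is obtained by feeding the points $\xi_i$ and scale $r_\xi$ from Proposition \ref{prop:loca} into Proposition \ref{prop:approx} and using the comparison $\Sigma_\xi \le \Sigma_a + Cg(r_a)$ together with $g(r)\ge \log\logeps/\logeps$ to absorb the error terms. You are in fact slightly more careful than the paper, which passes silently over the checks that $K\le\tfrac12$ (handled by your trivial-regime observation) and that $\rho_\xi$ is comparable to $\rho_a$.
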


%%%%%%%%%%%%%%%%%%%%%%%%%%%%%%%%%%%%%%%%%%%%%%%%%%%%%%%%%%%%%%%%%%%%
\section{Small time upper bound on the speed of vortices}
%%%%%%%%%%%%%%%%%%%%%%%%%%%%%%%%%%%%%%%%%%%%%%%%%%%%%%%%%%%%%%%%%%%%

Let $C_1$ be the constant given by Proposition \ref{prop:loca}
corresponding to the lower bound $\rho_{min}$ (as defined in \eqref{rhomin.def}) for $\rho_a.$. Let also $\eps \leq \exp(-\frac{8}{\rho_{min}}).$ Then the conclusions of
Proposition \ref{prop:loca}, applied to $v^t$ with this choice of constants, are available to us for all
$0 \le t \le T_{col}$. 
Since the conclusions of Proposition \ref{prop:loca}
remain true if we 
increase $C_1$, we may assume that
\begin{equation}\label{eq:gullit}
 \frac{1}{C_1} 
\leq \frac{\rho_{min}}{8},
%\qquad C_1|\!\log\eps_0|\eps_0^\frac{1}{2}\leq \frac{1}{2},
\end{equation}
which we do in the sequel. We  define the stopping time
$$
T_{loc} = \sup \{t\leq T_{col}\ ;\ \Sigma^0+g(r_a^s) \leq \frac{1}{2C_1},\
\forall\: 0\leq s \leq t\}.
$$
Since the function $g$ satisfies  $g(r)\geq r$ on $\R^+$, for
$t\leq T_{loc}$ we have $r_a^t \leq \frac{1}{2C_1}\leq  \frac{\rho_{min}}{16}$. In
particular, we may apply Proposition \ref{prop:loca} to $v^t$, 
$\{a_i(t)\}_{i=1}^l$ and $\{d_i\}_{i=1}^l$, which yields points $\{\xi_i(t)\}$
such that
\begin{equation}\label{rxit.def}
\| J v^t - \pi \sum_{i=1}^l d_i \delta_{\xi_i(t)}\|_{W^{-1,1}(\Omega)}
\leq r_\xi^t \equiv r_\xi(\Sigma_a^t,r_a^t) \equiv \eps \exp(C_1(\Sigma_a^t +
g(r_a^t))\logeps),
\end{equation}
where\footnote{Proposition \ref{prop:loca} actually uses a version of surplus
energy for which the weighted energy $\Ee$ is restricted to $\Omega.$
Since the
energy density and the weight are non-negative, our definition of
surplus $\Sigma_a^t$ here, integrating on the whole $\R^2$, yields a larger
number, and is therefore compatible with the claim of the proposition.}
$$   
\Sigma_a^t =  \left(\frac{\Ee(v^t)}{\logeps} - \pi \sum_{i=1}^l \eta^2(a_i(t)) \right)^+.
$$
Since $t\mapsto v^t|_\Omega$ is continuous in $H^1(\Omega)$, it is
clear that
$t\mapsto Jv^t$ is continuous as a function from $\R$ into $W^{-1,1}(\Omega)$,
and hence we can choose $\{ \xi_i(t)\}$ to be piecewise constant, and in particular measurable,
as functions of $t$. 
Since $\Ee$ is preserved by the flow for $v$ and $\eta^2(a_i)$ is preserved by
the flow
for the $a_i$'s, we have
$$
\Sigma_a^t \equiv \Sigma^0.
$$
Note in particular that $r_\xi^t \le \sqrt\eps$ for $t<T_{loc}$.

\begin{prop}\label{prop:jacspeed}
There exist positive constants $\tau_0, \eps_0$ and $C$, depending only on $l$, $\rho_{min}$,
$\eta_{min}$ and $\|\nabla \eta^2\|_\infty$,  
such that $\eps_0\leq \exp(-\frac{8}{\rho_{min}})$ and if $0<\eps<\eps_0$ and 
$$\Sigma^0 + g(r_a^t) \leq \frac{1}{4C_1}$$
for some $t\leq T_{loc},$
then $T_{loc} \geq t + \tau_0$ and
\begin{align}
	&\|Jv^s- Jv^t\|_{W^{-1,1}(\Omega)} 
\leq 
C\bigl(|t-s| + 
r_\xi^t\bigr), \label{small_t.1}
\\
&r_\xi^s \leq  
r^t_\xi  + C \logeps \eps^{1/2} \bigl(|s-t| + r_\xi^t\bigr),
\label{small_t.2}\\
&\{ a_i(s), \xi_i(s) \} \subset B(a_i(t), \frac{\rho_{min}}{4}),\qquad i=1,\ldots,l
\label{small_t.3}\end{align}
for every $t\leq s\leq t+\tau_0$. 
\end{prop}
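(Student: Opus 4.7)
\medskip

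\noindent\textbf{Plan.}
My strategy is to use the vorticity evolution equation \eqref{eq:evoljac} tested against smooth functions that are affine near each approximate vortex center $\xi_i(t)$ provided by Proposition~\ref{prop:loca}, in order to obtain a linear-in-$|s-t|$ control on the displacement of $\xi_i(\tau)$. The three estimates \eqref{small_t.1}--\eqref{small_t.3} and the lower bound $T_{loc}\geq t+\tau_0$ then follow by combining this control with Proposition~\ref{prop:loca}, the ODE-Lipschitz estimate $|a_i(s)-a_i(t)|\leq C|s-t|$ coming from \eqref{eq:limitdynamics}, and a short bootstrap relying on the a priori bound $r_\xi^\tau\leq \eps^{1/2}$ valid on $[0,T_{loc}]$.

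\medskip

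\noindent\textbf{Localized displacement bound.}
For each $i\in\{1,\ldots,l\}$ and each $k\in\{1,2\}$, I take $\varphi_{i,k}(x):=\chi_i(x)(x-\xi_i(t))_k$, where $\chi_i\in C^\infty_c(B(a_i(t),\rho_{min}/2))$ equals $1$ on $B(a_i(t),3\rho_{min}/8)$, so that $\|\nabla\varphi_{i,k}\|_\infty+\|\nabla^2\varphi_{i,k}\|_\infty\leq C$. For $|s-t|$ small enough and $s\leq T_{loc}$, the inclusion $\xi_i(s)\in B(a_i(s),\rho_{min}/8)$ (from Proposition~\ref{prop:loca} and the pairwise disjointness of the $B(a_i(t),\rho_{min})$'s) together with $|a_i(s)-a_i(t)|\leq C\tau_0$ ensures $\chi_i(\xi_i(s))=1$ and $\chi_i(\xi_j(s))=0$ for $j\neq i$. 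Combining this with \eqref{rxit.def} applied at both times yields
\[
\int_\Omega \varphi_{i,k}\,(Jv^s-Jv^t) \;=\; \pi d_i (\xi_i(s)-\xi_i(t))_k + O(r_\xi^s+r_\xi^t).
\]
On the other hand, plugging $\varphi_{i,k}$ into the right-hand side of \eqref{eq:evoljac} and using the uniform bound $E_{\eps,\eta}(v^\tau)/\logeps\leq \pi\sum_i\eta^2(a_i(t))+\Sigma^0\leq C$ (a consequence of the conservation of $\Ee$ and $\eta^2(a_i)$), both terms on the right of \eqref{eq:evoljac} are bounded by $C$ uniformly in $\tau\in[t,s]$, so integrating in time yields $|\int\varphi_{i,k}(Jv^s-Jv^t)|\leq C|s-t|$. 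Comparing the two displays,
\[
|\xi_i(s)-\xi_i(t)|\leq C(|s-t|+r_\xi^s+r_\xi^t),
\]
and the triangle inequality $\|Jv^s-Jv^t\|_{W^{-1,1}}\leq r_\xi^s+r_\xi^t+\pi\sum_i |\xi_i(s)-\xi_i(t)|$ gives the preliminary estimate
\[
\|Jv^s-Jv^t\|_{W^{-1,1}(\Omega)}\leq C(|s-t|+r_\xi^s+r_\xi^t).
\]

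\medskip

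\noindent\textbf{Bootstrap and conclusion.}
Combining the preliminary estimate with the ODE bound yields $|r_a^s-r_a^t|\leq C(|s-t|+r_\xi^s+r_\xi^t)$. A direct check shows that the function $g$ defined in \eqref{eq:defg} is Lipschitz on $\R^+$ with constant at most $2$, hence $C_1(g(r_a^s)-g(r_a^t))\logeps\leq C\logeps(|s-t|+r_\xi^s+r_\xi^t)$. Writing $r_\xi^s=r_\xi^t\exp(C_1(g(r_a^s)-g(r_a^t))\logeps)$ and using the inequality $e^x-1\leq xe^x$ together with $r_\xi^t\, e^x=r_\xi^s\leq \eps^{1/2}$, one derives $r_\xi^s-r_\xi^t\leq C\logeps\eps^{1/2}(|s-t|+r_\xi^s+r_\xi^t)$; absorbing the $r_\xi^s$ on the right-hand side into the left (valid since $C\logeps\eps^{1/2}\leq 1/2$ for small $\eps$) gives \eqref{small_t.2}. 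Substituting \eqref{small_t.2} back into the preliminary estimate to eliminate $r_\xi^s$ produces \eqref{small_t.1}. The inclusion \eqref{small_t.3} follows from $\xi_i(s)\in B(a_i(s),2r_a^s)\subset B(a_i(s),\rho_{min}/8)$ (using $r_a^s\leq \rho_{min}/16$ on $[0,T_{loc}]$) combined with $|a_i(s)-a_i(t)|\leq C\tau_0\leq \rho_{min}/8$, choosing $\tau_0$ small enough. Finally, $T_{loc}\geq t+\tau_0$ is obtained via a continuity/bootstrap argument: the map $s\mapsto r_a^s$ is continuous, and the bound $|r_a^s-r_a^t|\leq C(\tau_0+\eps^{1/2})$ yields $\Sigma^0+g(r_a^s)\leq 1/(4C_1)+2C(\tau_0+\eps^{1/2})\leq 1/(2C_1)$ for $\tau_0$ and $\eps_0$ chosen small enough.

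\medskip

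\noindent\textbf{Main difficulty.}
The delicate point is the apparent circularity of the system of estimates: the preliminary bound for $\|Jv^s-Jv^t\|_{W^{-1,1}}$ involves $r_\xi^s$, whose control is supplied by \eqref{small_t.2}, whose proof in turn uses the preliminary bound. This is resolved by invoking first the crude a priori bound $r_\xi^\tau\leq \eps^{1/2}$ valid on $[0,T_{loc}]$, which allows one to absorb $r_\xi^s$-terms and close the bootstrap before refining to the sharp statement.
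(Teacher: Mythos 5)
Your proposal is correct and follows essentially the same route as the paper: test the Jacobian evolution identity \eqref{eq:evoljac} against cutoff functions that are affine near the vortices to get a linear-in-time displacement bound (the paper uses a single directional test function $\sum_i d_i\frac{(x-a_i(t))\cdot(\xi_i(s)-\xi_i(t))}{|\xi_i(s)-\xi_i(t)|}\chi(|x-a_i(t)|)$ rather than your componentwise $\chi_i(x)(x-\xi_i(t))_k$, an immaterial difference), then close the same bootstrap via the explicit dependence $r_\xi = \eps\exp(C_1(\Sigma^0+g(r_a))\logeps)$, the Lipschitz bound on $g$, and the a priori bound $r_\xi^\tau\le\eps^{1/2}$ on $[0,T_{loc}]$.
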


\begin{proof}
For the ease of notation in the present proof,  $\|\cdot\|$
is understood to mean $W^{-1,1}(\Omega)$ while $|\cdot|$  denotes the Euclidean norm
on $\R^2.$

{\bf Step 1}. 
Let $t\leq s \leq  \min\{ T_{loc}, t+ \tau_0\}$, for $\tau_0$ to be fixed below. We first use the fact that $Jv^s, Jv^t$ 
are well-approximated by sums of point masses
to show that $\| Jv^s- Jv^t\|$ can be estimated by computing 
$\langle Jv^s - Jv^t, \varphi\rangle$ for a specific test function $\varphi$ with
certain good properties (in particular, bounds on {\em second} derivatives of $\varphi$).
Toward this end, note that
\begin{equation}\label{eq:decompjac}\begin{split}
\|Jv^s-Jv^t\| &\leq
\|Jv^s-\pi\sum_{i=1}^l d_i\delta_{\xi_i(s)}\|
+
\|Jv^t-\pi\sum_{i=1}^l d_i\delta_{\xi_i(t)}\|
+
\|\pi\sum_{i=1}^l d_i(\delta_{\xi_i(s)}-\delta_{\xi_i(t)})\|\\
&\leq r_\xi^s+r_\xi^t + \pi \sum_{i=1}^l
|\xi_i(s)-\xi_i(t)|.
\end{split}\end{equation}
We now fix $\tau_0$, depending only on $\|\nabla
 \eta^2\|_{\infty}$, $\eta_{min}$  and $\rho_{min}$, such that if $t\leq s\leq t+\tau_0,$ we
have $|a_i(s)-a_i(t)| \leq \frac{\rho_{min}}{8}$ for all $i\in \{1,\cdots
,l\}.$ By Proposition \ref{prop:loca}, the choice of $T_{loc}$, and Lemma \ref{lem:W-11},  for
every $\tau\leq T_{loc}$ we have
$|a_i(\tau)-\xi_i(\tau)|\leq 2 r_a^\tau  \leq
\frac{\rho_{min}}{8}.$
By the triangle inequality, it follows that
$\xi_i(s)\in B(a_i(t),
\frac{\rho_{min}}{4})$
for all $t\leq s \leq \min(t+\tau_0,T_{loc})$  and  $i\in \{1,\cdots
,l\}.$ Let 
\begin{equation}\label{eq:defvarphi}
\varphi(x) = \sum_{i=1}^l d_i
\frac{(x-a_i(t))\cdot(\xi_i(s)-\xi_i(t))}{|\xi_i(s)-\xi_i(t)|} \chi\Big(
|x-a_i(t)|\Big),
\end{equation}
where $\chi\in\mathcal{C}^\infty(\R^+,[0,1])$ is such that 
$\chi\equiv 1$ on $[0,\rho_{min}/4],$ 
$\chi\equiv 0$ on  $[\frac{\rho_{min}}{2},+\infty).$ 
By construction and the definition of $\rho_{min}$, we have $\varphi \in
\mathcal{D}(\Omega)$ and it follows that 
$$
\begin{aligned}
\pi \sum_{i=1}^l |\xi_i(s)-\xi_i(t)| 
&= \langle \pi\sum_{i=1}^l
d_i(\delta_{\xi_i(s)}-\delta_{\xi_i(t)}), \varphi \rangle\\
%&=\langle \pi\sum_{i=1}^l d_i(\delta_{\xi_i(s)}-\delta_{\xi_i(t)}, \varphi \rangle\\
&\le (r_\xi^t + r_\xi^s)\|\varphi\|_{W^{1,\infty}} + 
%\langle Jv^t -\pi\sum_{i=1}^l d_i\delta_{\xi_i(t)},\varphi\rangle
% - \langle Jv^s -\pi\sum_{i=1}^l d_i\delta_{\xi_i(s)},\varphi\rangle +
\langle Jv^s - J v^t ,\varphi\rangle.
\end{aligned}
$$
Combining this with \eqref{eq:decompjac}, we conclude that 
\begin{equation}\label{eq:forlan}
\mbox{ $\| Jv^s - Jv^t\| \le C( r^s_\xi + r^t_\xi)  +\langle Jv^s - J v^t ,\varphi\rangle$,
\quad\quad\quad
with $\|\varphi\|_{W^{2,\infty}} \le C \rho_{min}^{-2}$.}
\end{equation}

{\bf Step 2}.
We now deduce from \eqref{eq:forlan}, together with
\eqref{eq:evoljac}, the fact that $\Sigma^0 \le \frac 1{4C_1}$, and conservation of energy, that
\begin{equation}\label{eq:vanbasten}\begin{aligned}
\| Jv^s - Jv^t\|&\leq (r_\xi^t + r_\xi^s)\|\varphi\|_{W^{1,\infty}} + (s-t)
\sup_{\tau\in [t,s]}\|\frac{d}{d\tau}
Jv^\tau\|_{W^{-2,1}(\Omega)} \|\varphi\|_{W^{2,\infty}}\\
&\leq C(r_\xi^t + r_\xi^s + |t-s|),
\end{aligned}\end{equation}
for $t\leq s \leq \min(t+\tau_0,T_{loc})$, where $C$ depends only on $l$, $\rho_{min}$,
$\eta_{min}$ and $\|\nabla \eta^2\|_\infty$.

{\bf Step 3}.
It remains to estimate $r_\xi^s$ and to show that $t+\tau_0\leq T_{loc}.$
For that purpose, since $r_\xi^s=r_\xi(\Sigma^0,r_a^s)$, we first use \eqref{eq:vanbasten} to compute
\begin{equation}\label{eq:rxis}\begin{split}
r_a^s &= \| J v^s - \pi \sum_{i=1}^l d_i\delta_{a_i(s)}\| \\ 
&\leq \|Jv^s - J v^t\| + \|Jv^t -\pi\sum_{i=1}^l
d_i\delta_{a_i(t)}\| + \|\pi\sum_{i=1}^l
d_i(\delta_{a_i(s)}-\delta_{a_i(t)})\|\\
&\leq r_a^t + C\bigl( |t-s| + r_\xi^t + r_\xi^s) \bigr).
\end{split}\end{equation} 
Next, since $s\leq T_{loc}$, 
\begin{equation}\label{eq:rxis2}\begin{split}
r_\xi^s &= \eps \exp(C_1[\Sigma^0+g(r_a^s)]\logeps)\\
&\leq r_\xi^t + C_1\logeps\eps^{\frac{1}{2}}\|g'\|_\infty \ (r_a^s-r_a^t)^+\\
&\leq r_\xi^t + \frac{1}{2C}(r_a^s-r_a^t)^+,
\end{split}\end{equation} 
provided we assume, and this is again no loss of generality, that
$C|\!\log \eps_0|\eps_0^{\frac{1}{2}} \leq \frac{1}{2C}$ 
for the same constant  $C$ as in \eqref{eq:rxis}.
Combining \eqref{eq:rxis} with \eqref{eq:rxis2}
we obtain
\begin{equation}\label{eq:rxis3}
r_a^s - r_a^t  \leq  C\bigl( |t-s| + r_\xi^t\bigr). 
\end{equation}
Going back to \eqref{eq:rxis2}, this yields the desired estimate of $r^s_\xi$\, :
$$
r_\xi^s \leq  r^t_\xi  + C \logeps \eps^{1/2} \bigl(|s-t| + r_\xi^t\bigr).
$$
Then going back to \eqref{eq:vanbasten},
$$
\|Jv^s-Jv^t\| \leq C(|t-s| + r_\xi^t)
$$
for $t\leq s \leq \min(t+\tau_0,T_{loc}).$ 
Finally, by assumption we have $\Sigma^0+g(r_a^t) \leq 1/(4C_1)$ so that by
\eqref{eq:rxis3} and the fact that $g'\leq 1$,
$$
\Sigma^0 + g(r_a^s) \leq 1/(4C_1) + C\bigl( |t-s| + r_\xi^t\bigr) \leq 1/(4C_1)
+ C\bigl(\tau_0+\eps^\frac{1}{2}\bigr) \leq 1/(3C_1),
$$
provided we assume, and this is no loss of generality, that
$C(\tau_0+\eps_0^\frac{1}{2}) \leq 1/(12C_1).$ It follows that
$\min(t+\tau_0,T_{loc}) = t+\tau_0$, and the proof is complete. 
\end{proof}

\section{Control of the discrepancy}

In this section, we prove a discrete differential inequality for the quantity
$r_a^t.$
More precisely, we will prove

\begin{prop}\label{prop:discrineq}
There exist  positive constants $\eps_0$ and $C_0$, depending only on $l$, $\rho_{min}$, $\eta_{min}$ and
$\|\nabla \eta^2\|_\infty$,  
such that $\eps_0 \leq \exp(-\frac{8}{\rho_{min}})$ and if $0<\eps<\eps_0$ and 
\begin{equation}
\Sigma^0 + g(r_a^t) \leq \frac{1}{4C_1}
\label{rtasmall}\end{equation}
for some $t\leq T_{loc},$
then
$$
\frac{r_a^T  -r_a^t}{T-t} \leq C_0(\Sigma^0 + g(r_a^t))% \max( r_a^t, \frac{\log\logeps}{\logeps}), 
$$  
where  %$T = t+ \min \{ \frac{(r^t_\xi)^2}\eps, \frac {r^t_\xi} {C  \eps^{1/2}\logeps} \} \leq T_{loc}.$ 
$T = t+ \frac{(r^t_\xi)^2}\eps \leq T_{loc}$.
\end{prop}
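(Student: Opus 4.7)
Since $\Sigma^0 + g(r_a^t) \le 1/(4C_1)$, we have $r_\xi^t = \eps \exp(C_1(\Sigma^0+g(r_a^t))\logeps) \le \eps^{3/4}$, hence $T-t = (r_\xi^t)^2/\eps \le \eps^{1/2} \le \tau_0$ for $\eps_0$ sufficiently small, so Proposition \ref{prop:jacspeed} applies throughout $[t,T]\subset [t,T_{loc}]$. In particular $\Sigma^0+g(r_a^s)\le 1/(2C_1)$ on $[t,T]$, and both $a_i(s)$ and $\xi_i(s)$ stay in $B(a_i(t),\rho_{min}/4)$; Corollary \ref{cor:proche} is therefore at our disposal for each $v^s$, yielding localization points $\xi_i(s)$ with $|\xi_i(s)-a_i(s)|\le 2 r_a^s$ and reference field $j_*^s = j_*(\{\xi_i(s)\},r_\xi^s)$.

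\textbf{Reduction to an integrated Jacobian identity.} Given $\delta>0$, select $\varphi\in W^{1,\infty}_0(\Omega)$ with $\|\varphi\|_{W^{1,\infty}}\le 1$ realizing $\langle Jv^T - \pi\sum d_i\delta_{a_i(T)},\varphi\rangle \ge r_a^T-\delta$. Using the structure of extremal test functions from Lemma \ref{lem:W-11}, we may replace $\varphi$, at an additional cost of $\delta$, by a modified test function that is affine with constant gradient $\alpha_i$ ($|\alpha_i|\le 1$) on each ball $B(a_i(t),\rho_{min}/4)$ and satisfies $\|\varphi\|_{W^{2,\infty}}\le C$ outside these balls. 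Expanding
\[
r_a^T \le 2\delta + r_a^t + \langle Jv^T-Jv^t,\varphi\rangle + \pi\sum d_i[\varphi(a_i(t))-\varphi(a_i(T))]
\]
and using $\dot a_i=d_i\nabla^\perp\log\eta^2(a_i)$ from \eqref{eq:limitdynamics} together with $\nabla\varphi(a_i(s))=\alpha_i$ in the last term gives
\[
r_a^T - r_a^t - 2\delta \le \int_t^T \Bigl[\tfrac{d}{ds}\langle Jv^s,\varphi\rangle - \pi\sum_{i=1}^l \alpha_i\cdot\nabla^\perp\log\eta^2(a_i(s))\Bigr] ds.
\]

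\textbf{Evolution equation and core-approximation.} Applying \eqref{eq:evoljac}, the integrand splits into two groups. The term involving $\varphi_{x_kx_l}$ is supported outside $\cup_i B(a_i(t),\rho_{min}/4)$ where, by Proposition \ref{prop:relate}, the $\tilde\eps$-energy is at most $C(\Sigma^0+g(r_a^s))\logeps$; its contribution after dividing by $\logeps$ is $O(\Sigma^0+g(r_a^s))$. For the remaining term $\frac{1}{\logeps}\int \epsilon_{lj}\varphi_{x_l}(\eta^2_{x_k}/\eta^2)[v_{x_j}\!\cdot\! v_{x_k}+\delta_{jk}\eta^2(|v|^2-1)^2/\eps^2]$, we decompose $v_{x_j}\!\cdot\! v_{x_k} = |v|_{x_j}|v|_{x_k}+(j(v)/|v|)_j(j(v)/|v|)_k$ and invoke Corollary \ref{cor:proche}: the $|v|_{x_j}|v|_{x_k}$ and potential contributions are controlled by $E_{\eps,\eta}(|v^s|)\le C(\Sigma^0+g(r_a^s))\logeps$. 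Substituting $j_*^s$ for $j(v^s)/|v^s|$ in the bilinear form, the pure quadratic error $(j(v^s)/|v^s|-j_*^s)^{\otimes 2}$ is bounded by the weighted $L^2$-estimate of Corollary \ref{cor:proche}. The cross-terms, which naively only give a $\sqrt{\Sigma^0+g(r_a^s)}$ decay by Cauchy-Schwarz, are brought down to linear order in $\Sigma^0+g(r_a^s)$ through integration by parts exploiting the gradient representation $j_*^s=\nabla^\perp h^s$ from Proposition \ref{prop:approx} together with the $W^{-1,1}$-bound \eqref{eq:JminusJstar} on $\nabla\times(j(v^s)-j_*^s)$.

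\textbf{Leading-order computation and conclusion.} The principal term $\frac{1}{\logeps}\int \epsilon_{lj}\varphi_{x_l}(\eta^2_{x_k}/\eta^2)(j_*^s)_j(j_*^s)_k$ is computed explicitly in polar coordinates around each $\xi_i(s)$: with $\nabla\varphi\equiv \alpha_i$ on the relevant ball, the angular identity $\int_0^{2\pi}\hat y_l\hat y_n\,d\theta = \pi\delta_{ln}$ and the radial integral $\int_{r_\xi^s}^{1/\logeps} dr/r = (1-K^s)\logeps - \log\logeps$ (with $K^s = C_1(\Sigma^0+g(r_a^s))\le 1/2$) yield, after dividing by $\logeps$,
\[
\pi\sum_{i=1}^l \alpha_i\cdot\nabla^\perp\log\eta^2(\xi_i(s)) + O(\Sigma^0+g(r_a^s)).
\]
Since $|\xi_i(s)-a_i(s)|\le 2r_a^s\le 2 g(r_a^s)$, replacing $\xi_i(s)$ by $a_i(s)$ via the Lipschitz regularity of $\nabla\log\eta^2$ produces an error $O(g(r_a^s))$. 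Combined with the preceding steps, the integrand in the reduction of Step 2 is thus $O(\Sigma^0+g(r_a^s))$. Integrating over $[t,T]$, using the monotonicity $g(r_a^s)\le C g(r_a^t)$ guaranteed by Proposition \ref{prop:jacspeed}, and letting $\delta\to 0$, we conclude $r_a^T-r_a^t\le C_0(\Sigma^0+g(r_a^t))(T-t)$. The most delicate point is the handling of the cross-terms in the core-approximation: the linear (rather than square-root) dependence on $\Sigma^0+g(r_a^s)$, which is essential for closing the eventual Gronwall loop, hinges on combining the gradient structure of $j_*^s$ with the $W^{-1,1}$-level localization of the approximation error from Proposition \ref{prop:approx}.
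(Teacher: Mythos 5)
Your overall architecture matches the paper's: a test function affine near each $a_i(t)$, the evolution identity \eqref{eq:evoljac}, the substitution of $j_*^s$ via Proposition \ref{prop:approx} / Corollary \ref{cor:proche}, an explicit computation of the $(j_*)_j(j_*)_k$ term producing $\pi\sum_i d_i\,\nabla\varphi\cdot\nabla^\perp\log\eta^2(a_i(s))$ that cancels the ODE term, and the recognition that the cross terms are the crux. But precisely at that crux your argument has a genuine gap. You propose to beat Cauchy--Schwarz on $\int\psi_{jk}\bigl(\frac{j(v)}{|v|}-j_*^s\bigr)_j(j_*^s)_k$ by integrating by parts using $j_*^s=\nabla^\perp h^s$ together with the bound \eqref{eq:JminusJstar} on $\nabla\times(j(v)-j_*^s)$. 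This only controls the \emph{rotational} part of the error field. Because of the factor $\psi_{jk}$ (and because $j(v)-j_*^s$ is not curl-free), the integration by parts unavoidably also produces the \emph{divergence} part, for which no estimate is available from Proposition \ref{prop:approx}. The paper handles this by a weighted Hodge decomposition $\tilde\chi_i(j(v)-j_*^s)=\nabla f^s+\eta^{-2}\nabla^\perp g^s$ and, crucially, controls $\nabla\cdot(\eta^2 j(v))$ by integrating the continuity equation \eqref{eq:continuity} in time over $[t,T]$, which converts it into a boundary term $\logeps\,\eta^2(|v|^2-1)\big|_t^T$ of size $O(\eps\logeps^3(\Sigma^0+g(r_a^t)))$ in $L^2$. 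Without this ingredient (which the paper flags as ``a key point in our argument''), the cross term cannot be closed at linear order.

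A second, related omission: your proof never uses the specific value $T-t=(r_\xi^t)^2/\eps$, and it must. Since the conclusion divides by $T-t$, the additive errors that are \emph{not} proportional to $T-t$ --- the $C(r_\xi^T+r_\xi^t)$ from the localization step, and the $O(\eps\logeps^{3/2}(\Sigma^0+g))$ terms coming from the continuity equation --- are only absorbed into $C(T-t)(\Sigma^0+g(r_a^t))$ because $T-t\ge\eps\logeps^2$ (by \eqref{C0gtr1}) and $r_\xi^t/(T-t)=\eps/r_\xi^t\le\logeps^{-1}\le g(r_a^t)$. Conversely, the upper bound $T-t\le\eps^{1/2}$ is needed so that the vortices move by at most $\sigma\le C(r_\xi^t)^2/\eps$, which is what makes $\|j_*^s-j_*^t\|_{L^2}^2\le C(\Sigma^0+g(r_a^t))\logeps$ and legitimizes freezing the reference field at $j_*^t$ in the cross term. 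Finally, even for the rotational part, pairing the $W^{-1,1}$ bound $\|\nabla\times(j(v)-j_*^s)\|_{W^{-1,1}}\le Cr_\xi$ against $j_*^t$ (whose $L^\infty$ norm is $(r_\xi^t)^{-1}$) is only borderline; the paper needs the interpolation \eqref{negative.interp} between $W^{-1,1}$ and $L^1$, elliptic estimates, and a cancellation of exponents ($\theta+\frac{2}{p'}-1=0$) to conclude. These are not routine details to be filled in; they are the substance of the proposition.
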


This is the main estimate in the proof of Theorem \ref{thm:main}.

\begin{proof}
	We first require the constant $\eps_0$ to be smaller than the one appearing in the statement of Proposition \ref{prop:jacspeed}. As in the proof of Proposition \ref{prop:jacspeed}, we will write simply $\| \cdot \|$
to denote the $W^{-1,1}(\Omega)$ norm.
Note that the condition  \eqref{rtasmall} states exactly that 
\begin{equation}
r^\xi_t \le \eps^{3/4}
\label{eps14}
\end{equation}
and then the definition of $T$ and  \eqref{small_t.2}  yield
%Note that, due to \eqref{small_t.2} and our definition of $T$,
\begin{equation}
r^s_\xi \le 2 r^t_\xi  \quad\quad\mbox{ for all  }s\in [t,T]
\label{rsxi}\end{equation}
if $C$ is large enough and $\eps_0$ small enough, which we henceforth take to be the case. Moreover, from \eqref{eq:rxis3}, we see that
$r_a^s \le r_a^t + C(T-t +  r_\xi^t)$ for all $s\in [t,T]$, and then the
choice of $T$ and the
definition of $g$ imply that
\begin{equation}
g(r_a^s) \le 2 g(r_a^t)\quad\quad\mbox{ for all }s\in [t,T].
\label{gras}\end{equation}

{\bf 1}. First note that
\begin{align}
r^T_a - r^t_a 
&= 
\| Jv^T - \pi \sum_{i=1}^l  d_i \delta_{a_i(T)}\|
-
\| Jv^t - \pi \sum_{i=1}^l  d_i \delta_{a_i(t)}\|\nonumber\\
&\le 
\pi \sum_{i=1}^l \left(|\xi_i(T)-a_i(T)| - |\xi_i(t) - a_i(t)| \right)\ + \ r^T_\xi + r^t_\xi
\nonumber\\
&\le 
\pi \sum_{i=1}^l  \nu_i \cdot \bigl(\xi_i(T)- \xi_i(t)  + a_i(t)- a_i(T) \bigr)\ + \ r^T_\xi + r^t_\xi
\label{eq:p5.1}
\end{align}
for $\nu_i = \frac {\xi_i(T)-a_i(T)}{|\xi_i(T)-a_i(T)|}$ (unless $\xi_i(T)- a_i(T)=0$, in which
case $\nu_i$ can be any unit vector).
We now define
\[
\varphi(x) = \sum_i  d_i\nu_i \cdot(x - a_i(t)) \chi(|x - a_i(t)|)
\]
for $\chi\in C^\infty(\R^+, [0,1])$ such that $\chi \equiv 1$ on $[0,\frac 12 \rho_{min}]$ and $\chi\equiv 0$ on $(\rho_
{min},\infty)$. 
It follows from \eqref{small_t.3} that (since $d_i{}^2=1$ for all $i$)
\[
\pi \sum_{i=1}^l  \nu_i \cdot \bigl(\xi_i(T)- \xi_i(t)  + a_i(t)- a_i(T) \bigr)\
=
\pi \sum_{i=1}^l  d_i\Bigl[ \varphi(\xi_i(T))- \varphi(\xi_i(t))  - \varphi(a_i(T)) + \varphi(a_i(t)) \Bigr],
\]
so that \eqref{eq:p5.1} and the definition of $r^T_\xi$ imply that 
\begin{equation}\label{eq:p5.2}
r^T_a-r^t_a \le \langle  \varphi , Jv^T - Jv^t \rangle  \ -
\ \pi\sum_{i=1}^l d_i \Bigl[\varphi(a_i(T)) - \varphi(a_i(t))\Bigr]
+\ C(r^T_\xi + r^t_\xi).
\end{equation}

{\bf 2}.
The remainder of the proof is devoted to an estimate of 
$ \langle  \varphi , Jv^T - Jv^t \rangle $.
First,  using \eqref{eq:evoljac},
\begin{equation}\label{eq:p5.3}
\begin{aligned}
\langle \varphi , Jv^T - Jv^t \rangle 
&= 
\int_t^T  \frac \partial{\partial s} \langle \varphi ,  Jv^s \rangle \, ds \ \\
&= \frac 1 \logeps \int_t^T \int_\Omega 
\left( \ep_{lj} \varphi_{x_l}
{\eta^2_{x_j}}
\frac
{(|v_{\eps}|^2-1)^2}{4\eps^2}\ +  \ep_{lj}  \varphi_{x_kx_l} v_{\eps,x_j}\cdot v_{\eps,x_k}\right)
\\
&\quad\quad\quad+  \int_t^T\int_\Omega 
\ep_{lj} \varphi_{x_l}
\frac{\eta^2_{x_k}}{\eta^2} \frac{ v_{\eps,x_j}\cdot v_{\eps,x_k} }\logeps.
\end{aligned}
\end{equation}
We immediately see from  \eqref{eq:estimparsurplusbis} 
that
\begin{equation}\label{eq:easyterm1}
\left| \frac 1 \logeps  \int_\Omega 
\left( \ep_{lj} \varphi_{x_l}
{\eta^2_{x_j}}
\frac {(|v|^2-1)^2}{4\eps^2}\  \right)\right|
\le C(\Sigma^0+ g(r_a^s)) 
\end{equation}
for every $s\in [t,T]$. 
Moreover, it follows from \eqref{small_t.3} that
$B(\xi_i(s);4 \logeps^{-1}) \subset B(a_i(t), \frac 12 \rho_{min})$
if $\eps_0$ is small enough, 
and the definition of $\varphi$ 
implies that $\varphi_{x_i x_j} = 0$ in $\cup_{i=1}^l  B(a_i(t), \frac 12 \rho_{min})$, so
\eqref{eq:usuelle} implies that
\begin{equation}\label{eq:easyterm2}
\begin{aligned}
\frac 1 \logeps  \int_\Omega 
\left|\ep_{lj} \varphi_{x_kx_l} v_{\eps,x_j}\cdot v_{\eps,x_k}\right|
&\le 
C \frac 1\logeps E_\eps(v; \Omega\setminus \cup B(\xi_i, 4\logeps^{-1}))\\
&\le C(\Sigma^0+ g(r_\xi^s)) \le C(\Sigma^0+ g(r_a^s)) 
\end{aligned}\end{equation}
for every $s\in [t,T]$.

{\bf 3}. We now decompose the remaining term in \eqref{eq:p5.3}. For every $s\in [t,T]$,
let\footnote{Note that the regularization scale $r_\xi^t$ is fixed for $s\in [t,T]$.} $j_*^s = j_*( \{\xi_i(s)\}, r_\xi^t)$  be the approximation to $j(v^s)$ obtained in
Proposition \ref{prop:approx}. Note that 
\[
v_{\eps,x_j}\cdot v_{\eps,x_k} =
|v_{\eps}| _{x_j}\, |v_{\eps}|_{x_k}
+
\frac {j(v)_j}{|v|} \ \frac {j(v)_k}{|v|} 
\]
where for example $j(v)_j = (iv, \partial_{x_j}v)$ denotes the $j$th component of $j(v)$.
Thus,  adding and subtracting $j_*^s$ in various places, and writing $\psi_{jk}$ as an abbreviation for 
$\ep_{lj} \varphi_{x_l}\frac{\eta^2_{x_k}}{\eta^2}$, 
we have for every $s\in [t,T]$,
\begin{equation}\label{eq:decompose}
\begin{aligned}
&\int_\Omega 
\psi_{jk} %\ep_{lj} \varphi_{x_l}\frac{\eta^2_{x_k}}{\eta^2} 
\frac{ v_{\eps,x_j}\cdot v_{\eps,x_k} }\logeps
=
\int_\Omega 
\psi_{jk} %\ep_{lj} \varphi_{x_l}\frac{\eta^2_{x_k}}{\eta^2} 
\frac {( j_*^s)_j ( j_*^s)_k}\logeps \ \ \\
&\quad\quad\quad + 
\int_\Omega 
\psi_{jk} %\ep_{lj} \varphi_{x_l}\frac{\eta^2_{x_k}}{\eta^2} 
\frac 1 \logeps \left[
\left( \frac {j(v)}{|v|} - j_*^s\right)_j (j_*^s)_k +
\left( \frac {j(v)}{|v|} - j_*^s\right)_k (j^s_*)_j\right] \ \\
&\quad\quad\quad+
\int_\Omega 
\psi_{jk} %\ep_{lj} \varphi_{x_l}\frac{\eta^2_{x_k}}{\eta^2} 
\frac 1 \logeps \left[  {|v_{\eps}| _{x_j}\, |v_{\eps}|_{x_k}}
+
\left( \frac {j(v)}{|v|} - j_*^s\right)_j
\left( \frac {j(v)}{|v|} - j_*^s\right)_k\right] \ .
\end{aligned}
\end{equation}
We immediately dispense with the easiest terms by using \eqref{eq:estimparsurplusbis} 
to see that
\begin{equation}\label{eq:iniesta}
\int_\Omega 
\psi_{jk} %\ep_{lj} \varphi_{x_l}\frac{\eta^2_{x_k}}{\eta^2} 
\frac 1 \logeps \left[  {|v_{\eps}| _{x_j}\, |v_{\eps}|_{x_k}}
+
\left( \frac {j(v)}{|v|} - j_*^s\right)_j
\left( \frac {j(v)}{|v|} - j_*^s\right)_k\right] \  \le C(\Sigma^0 + g(r_a^s))
\end{equation}
for every $s\in [t,T]$. 

{\bf 4}. We next consider the first term on the right-hand side of \eqref{eq:decompose}, which
is the term that  yields the dominant contribution.
Since $j_*^s$ is supported in $\cup_i B(\xi_i(s), \logeps^{-1})$, clearly
\[ %\begin{equation}\label{eq:mainterm}
%\begin{aligned}
\int_\Omega 
\psi_{jk}
\frac {( j_*^s)_j ( j_*^s)_k}\logeps \ \ 
=
\sum_{i=1}^l
\int_{B(\xi_i(s), \logeps^{-1})} \psi_{jk} \frac {( j_*^s)_j ( j_*^s)_k}\logeps \ .
%\end{aligned}
\] %\end{equation}
For each $i=1,\ldots, l$, if $x\in B(\xi_i(s), \logeps^{-1})$, then
$|x-a_i(s)| \le \logeps^{-1} + r^s_a + r^s_\xi$,  by \eqref{eq:aminusxi}, 
so for every $s\in [t,T]$,
\begin{align*}
\left|\int_{B(\xi_i(s), \logeps^{-1})}\Bigl( \psi_{jk}(x) - \psi_{jk}(a_i(s))\Bigr)\frac {( j_*^s)_j ( j_*^s)_k}\logeps \ 
\right|
\ &\le \  \| \nabla \psi_{jk}\|_\infty(\logeps^{-1} + r^s_a + r^s_\xi) \frac{\| j_*^s\|_2^2}\logeps \\
\ &\le \ C (\logeps^{-1} + r^s_a + r^s_\xi) ,
\end{align*}
using the explicit form of $j_*^s$, which (together with the definition
\eqref{rxit.def} of $r_\xi^s$) also implies that
\begin{align*}
\int_{B(\xi_i(s), \logeps^{-1})}\frac {( j_*^s)_j ( j_*^s)_k}\logeps \ 
& = \ \frac{ \pi}\logeps  \delta_{jk} (\log \frac 1 {r_\xi^s}  - \log \logeps + \frac 14) \ \\
&= \ \pi\, \delta_{jk}\  \bigl( 1 - C_1(\Sigma^0 + g(r^s_a)) \bigr) \ + \  O(\frac{\log \logeps}{\logeps}).
\end{align*}
Combining the above computations  and recalling that $g(r)\ge  \max(r,  \frac{ \log \logeps}\logeps)$ for all $r$ and that
$g(r_a^s) \ge r_\xi^s$ for $s\le T_{loc}$, we conclude that
\begin{align}
\int_\Omega 
\psi_{jk}
\frac {( j_*^s)_j ( j_*^s)_k}\logeps \ \ 
&= \pi \sum_i \psi_{kk}(a_i(s)) + O(C_1(\Sigma^0 + g(r_a^s))\nonumber\\
&= \pi \frac d{ds}\left(\sum_i  d_i \varphi(a_i(s)) \right) + O(C_1(\Sigma^0 + g(r_a^s)).
\label{eq:mainterm}\end{align}
In the last line we have used the definition 
$\psi_{kk} = \ep_{lk}\varphi_{x_l} \partial_{x_k}(\log \eta^2 )= \nabla\varphi \cdot \nabla^\perp(\log \eta^2)$
together with the ordinary differential equation \eqref{eq:limitdynamics} satisfied by the points $a_i(\cdot)$. 

{\bf 5}. Combining \eqref{eq:p5.2}, \eqref{eq:p5.3}, \eqref{eq:easyterm1}, \eqref{eq:easyterm2}, \eqref{eq:decompose},  \eqref{eq:iniesta}, and \eqref{eq:mainterm}, 
and recalling \eqref{rsxi}, \eqref{gras}, we find that
\begin{align}\label{eq:halfway}
r^T_a - r^t_a
&\le   C(T-t) \bigl(\Sigma^0 + g(r_a^t) \bigr)\ \ + \ C r^t_\xi \ +
\  \int_t^T
\int_\Omega 
%\ep_{lj} \varphi_{x_l}\frac{\eta^2_{x_k}}{\eta^2} 
\frac {\psi_{jk} }\logeps
\left( \frac {j(v)}{|v|} - j_*^s\right)_j (j_*^s)_k \, dx\,ds\\
&
\quad\quad
\quad\quad
\quad\quad
\quad\quad
 +
 \int_t^T\int_\Omega 
 %\ep_{lj} \varphi_{x_l}\frac{\eta^2_{x_k}}{\eta^2} 
\frac {\psi_{jk} } \logeps \left( \frac {j(v)}{|v|} - j_*^s\right)_k (j^s_*)_j\, dx\,ds  .\nonumber
\end{align}
We now begin to control the integrals on the right-hand side above. We will consider 
only the first, since the estimate of the second is identical.
First,
\begin{align}
 \int_t^T
\int_\Omega 
\frac {\psi_{jk} }\logeps
\left( \frac {j(v)}{|v|} - j_*^s\right)_j (j_*^s)_k \, dx\, ds
\ & = \  
\int_\Omega 
\frac {\psi_{jk} }\logeps (j_*^t)_k
 \int_t^T\left( \frac {j(v)}{|v|} - j_*^s\right)_j  \ ds \, dx\label{hardterm1}\\
\ &  + \ 
\int_\Omega 
\int_t^T \frac {\psi_{jk} }\logeps
(j_*^s - j_*^t)_k \left( \frac {j(v)}{|v|} - j_*^s\right)_j  \ ds \, dx \, . \nonumber
\end{align}
We claim that
\begin{equation}
\int_\Omega 
\int_t^T \frac {\psi_{jk} }\logeps
(j_*^s - j_*^t)_k \left( \frac {j(v)}{|v|} - j_*^s\right)_j  \ ds \, dx \, 
\le
C (T-t) (\Sigma^0  + g(r_a^t)).
\label{hardterm1a}\end{equation}
Using the Cauchy-Schwarz inequality, \eqref{eq:estimparsurplusbis}, and \eqref{gras}, we see that it suffices to prove that
%\eqref{hardtermm1a} will follow if we can show that
\[
\int_\Omega |j_*^s - j_*^t|^2 \ dx \ \le \
 (\Sigma^0 + g(r_a^t)) \logeps
\quad\quad\quad\mbox{ for every $s\in [t,T]$. }
\]
Toward this end, we fix some such $s$, and   we introduce the notation
\[
\bar \xi_i := \frac 12 (\xi_i^t + \xi_i^s),\quad\quad
\sigma := 
r_\xi^s + r_\xi^t +\sum_{i=1}^l |\xi_i(t) - \xi_i(s)|.
% \le \frac 1 \pi( r_\xi^t+ r_\xi^s + \| Jv^t - Jv^s\|) \le  
\]
Our choice of $T$ and \eqref{small_t.1} imply that $\sigma \le 
%\sum_{i=1}^l |\xi_i(t) - \xi_i(s)| \le \frac 1 \pi( r_\xi^t+ r_\xi^s + \| Jv^t - Jv^s\|) \le  
 C\frac{(r_\xi^t)^2}\eps$.
 Writing $B_i := B(a_i(t), \frac{\rho_{min}}{2})$, we deduce from \eqref{small_t.3} and the support 
properties of $j_*$ that 
\[%\begin{align}
\int_\Omega |j_*^t - j_*^s|^2 \ dx 
\ 
 = \sum_{i=1}^l 
 \int_{B_i}|j^t_* - j^s_*|^2 \ dx.
\]
For each $i$, $B(\bar \xi_i, \sigma) \subset B(\xi_i(t), 2\sigma)\cap B(\xi_i(s), 2\sigma)$, so
by an explicit computation, and recalling \eqref{rsxi} and the definition \eqref{rxit.def} of $r_\xi^t$,
we find that
\begin{align*}
\int_{B(\bar \xi_i, \sigma) }\!\!\!\!|j^t_* - j^s_*|^2 \ dx
\le 
2 \int_{B(\xi_i(t), 2 \sigma) }\!\!\!|j^t_* |^2 \ dx
+
2 \int_{B(\xi_i(s), 2 \sigma) }\!\!\!|j^s_* |^2 \ dx
&\le 
2\log( \frac{2\sigma}{r_\xi^t}) + 
2\log( \frac{2\sigma}{r_\xi^s}) + C 
\\
&\le 
C\log( \frac {r_\xi^t} \eps)\\
&\le
C (\Sigma^0 + g(r_a^t)) \logeps.
\end{align*}
Next, on $B(\bar \xi_i, \frac 1{2\logeps})$, the definitions imply that both $j^s$ and $j^t$ are nonzero, and
in fact
\[
|j^t_*(x) - j^s_*(x)|^2 = \frac {|\xi_i(t)-\xi_i(s)|^2}{|x - \xi(t)|^2 |x - \xi_i(s)|^2}.
\]
Since $|\xi_i(\tau) - \bar \xi_i| \le \frac \sigma 2$ for $\tau = t,s$,
it follows that 
\[
|j^t_*(x) - j^s_*(x)|^2 \ \ \le 
\frac {4 \sigma^2}{|x - \bar \xi_i|^4} \quad\mbox{ on }B(\bar \xi_i, \frac 1{2\logeps})\setminus B(\bar \xi_i, \sigma)
\]
and hence that 
\[
\int_{B(\bar \xi_i, \frac 1{2\logeps})\setminus B(\bar \xi_i, \sigma) }|j^t_* - j^s_*|^2 \ dx \ 
\le C.
\]
Finally, 
\[
\int_{B_i \setminus B(\bar \xi_i, \frac 1{2\logeps}) }|j^t_* - j^s_*|^2 \ dx \ 
\le
2\int_{B_i \setminus B(\xi_i(t), \frac 1{4\logeps}) }|j^t_* |^2 \ dx \ 
+
2\int_{B_i \setminus B(\xi_i(s), \frac 1{4\logeps}) }|j^s_* |^2 \ dx \ \ \le C.
\]
We deduce \eqref{hardterm1a} by combining the previous inequalities.\

{\bf 6}.
We now consider the first term on the right-hand side of \eqref{hardterm1}.
Clearly
\begin{align}
\int_\Omega 
\frac {\psi_{jk} }\logeps (j_*^t)_k
 \int_t^T\left( \frac {j(v)}{|v|} - j_*^s\right)_j  \, ds \, dx
\ & = \  
\sum_{i=1}^l  
 \int_{B_i}\int_t^T \frac {\psi_{jk} }\logeps (j_*^t)_k  
\left( \frac {j(v)}{|v|} - j(v)\right)_j ds \, dx \nonumber \\
&\quad\quad+ \ 
\sum_{i=1}^l \int_{B_i}\int_t^T \frac {\psi_{jk} }\logeps (j_*^t)_k 
\left(  {j(v)} - j_*^s\right)_j ds \, dx.
\label{hardterm2}\end{align}
%For the duration of the proof, we argue for some fixed $i\in \{1,\ldots, l\}$.
By elementary estimates,
\[
| \frac{j(v)}{|v|} - j(v)| =  \frac{|j(v)|}{|v|} \, \Bigl| |v|-1\Bigr|
\le 
%\frac \eps2 |\nabla v|^2 + \frac 1{2\eps}(|v| -1)^2  
%\le 
\frac \eps2 |\nabla v|^2 + \frac 1{2\eps}(|v|^2 -1)^2  ,
\]
and  from the definitions, and recalling \eqref{C0gtr1}, we see that 
$\| j_*^t\|_\infty \le (r_\xi^t)^{-1} \le (\eps\logeps)^{-1}$. 
Thus  for every $i$, 
\begin{equation}
\begin{aligned}
\left| \int_{B_i}\int_t^T \frac {\psi_{jk} }\logeps (j_*^t)_k \cdot 
\left( \frac {j(v)}{|v|} - j(v)\right)
ds \, dx
\right|
&\ \le \  
C\| j_*^t\|_\infty \eps(T-t) \frac{E_{\eps,\eta}(v)}{\logeps}\\
&\le
\frac C \logeps(T-t))(\Sigma^0+C)\\
&\le
C(T-t)(\Sigma^0+ g(r_a^t)),
\end{aligned}
\label{hardterm3}\end{equation}
since 
\begin{equation}\label{totalE}
\frac {E_{\eps,\eta}(v)}{\logeps} \le
\Sigma^0 + \pi \sum_{i=1}^l \eta^2(a_i(t))
\le
\Sigma^0 + C(l, \|\eta\|_\infty).
\end{equation}

{\bf 7}. 
Now fix some $i\in \{1,\ldots, l\}$ and let $\tilde\chi^i\in C^\infty_c(B(a_i(t), \frac 34 \rho_{min}))$ be a function such that 
$\tilde\chi^i=1$ on $B_i$. Then for every $s\in [t,T]$,
\begin{equation}\label{eq:etaHodge}
\tilde \chi_i({j(v)} - j_*^s)
 = \nabla f^s +  \frac 1 {\eta^2}\nabla^\perp g^s \quad\quad\quad\mbox{ in } B(a_i(t), \frac 34 \rho_{min})
\end{equation}
for $f^s$ and $g^s$, real-valued functions on $B(a_i(t), \frac 34 \rho_{min})$,  solving
\begin{equation}\label{f.def}\begin{aligned}
\nabla\cdot(\eta^2\nabla f^s) &= \nabla\cdot\bigl(\tilde \chi^i \eta^2 (  j(v)  - j_*^s)\bigr) 
&\mbox{ in }B(a_i(t), \frac 34 \rho_{min}),\\
\nu \cdot\nabla f^s &= 0
&\mbox{ on }\partial B(a_i(t), \frac 34 \rho_{min}),
%= \nabla \tilde\chi^i \cdot (  j(v)  - j_*^s) + \tilde\chi^i \nabla \cdot j(v)
\end{aligned}
\end{equation}
and
\begin{equation}\label{g.def}\begin{aligned}
-\nabla\cdot( \frac{ \nabla g^s}{\eta^2}) &= \nabla\times \bigl(\tilde \chi^i( j(v)  - j_*^s)\bigr).
&\mbox{ in }B(a_i(t), \frac 34 \rho_{min}),\\
g^s &= 0
&\mbox{ on }\partial B(a_i(t), \frac 34 \rho_{min}).
\end{aligned}
\end{equation}
Indeed, if we let $f^s$ be a solution of \eqref{f.def}, then 
$\eta^2(\tilde \chi_i (j(v) - j_*^s) - \nabla f^s)$
is divergence-free and hence can be written as $\nabla^\perp g^s$ on $B(a_i(t), \frac 34 \rho_{min})$,
so that \eqref{eq:etaHodge} holds. Then it follows from \eqref{f.def} that $g^s$ satisfies the equation in \eqref{g.def}, and that the boundary condition
is satisfied after adding a constant to $g^s$.

Thus
\begin{equation}
\int_{B_i}\int_t^T \frac {\psi_{jk} }\logeps (j_*^t)_k \,
\left(  {j(v)} - j_*^s\right)_j ds \, dx
\ = \ 
\int_{B_i}\frac {\psi_{jk} }\logeps (j_*^t)_k\,  (\nabla F + \frac{\nabla^\perp G}{\eta^2})_j\, dx
\label{eq:moresplitting}\end{equation}
for
\[
F(x) = \int_t^T f^s(x) \ ds, 
\quad\quad\quad
G(x) = \int_t^T g^s(x) \ ds.
\]
We write 
$F = F_1+\cdots + F_4$, where
\[
\nabla\cdot(\eta^2 \nabla F_m) = A_m \ \ \mbox{ in }B(a_i(t), \frac 34 \rho_{min}),
\quad
\nu \cdot \nabla F_m = 0 \ \ \mbox{ in }\partial B(a_i(t), \frac 34 \rho_{min}),
\]
for
\begin{align*}
A_1 &= \tilde \chi^i  \int_t^T \nabla \cdot (\eta^2 j(v)) \ ds\\
A_2 &= - \tilde \chi^i  \int_t^T \nabla \cdot (\eta^2j_*^s) \ ds\\
A_3 &=   \int_t^T \eta^2  \nabla \tilde \chi^i\cdot \frac{j(v)}{|v|} (|v| - 1) \ ds\\
A_4 &=   \int_t^T \eta^2  \nabla \tilde \chi^i\cdot( \frac {j(v)}{|v|} - j_*^s) \ ds.
\end{align*}
Using the continuity equation  \eqref{eq:continuity} --- this is a key point in our
argument --- 
and \eqref{totalE},
we note that
\begin{align*}
\|A_1 \|_{L^2} & =  
\logeps \left\| \tilde \chi^i \eta^2  \left. (|v|^2-1) \right|_t^T  \right\|_{L^2}\\
&\le
C \eps \logeps\left( E_{\eps,\eta}(v^T) + E_{\eps, \eta}(v^t)\right)\\
&\le C \eps\logeps^3 (\Sigma^0 + g(r_a^t))
\end{align*}
since $g(r) \ge \frac {1+\log\logeps}{\logeps}$ for all $r$. 
Next, the definition implies that $\nabla \cdot j_*^s = 0$ for every $s$ and that 
$\| j_*^s\|_{L^p} \le C_p \logeps^{1 - \frac 2p}$ for every $p<2$, so 
\begin{align*}
\| A_2\|_{L^p} \le  \left\| \tilde \chi_i \ \right\|_{L^\infty} (T-t)
\sup_{s\in [t,T]}
\| \nabla(\eta^2)\cdot j_*^s  \|_{L^p} \le C (T-t) \logeps^{1-\frac 2p}\quad\mbox{for }p<2.
\end{align*}
Very much as in  \eqref{hardterm3}, we can check that
\[
\|A_3\|_{L^1} \ \le \ C (T-t) \sup_{s\in[t,T]} \| \frac {j(v)}{|v|}(|v|-1)\|_{L^1}
\le C (T-t)  \eps \logeps^2 (\Sigma^0 + g(r^t_a)),
\]
and it follows from \eqref{eq:estimparsurplusbis} and \eqref{gras} that 
\[
\|A_4\|_{L^2} \ \le \ C (T-t) \Big( \logeps (\Sigma^0 + g(r^t_a)) \Big)^{1/2}.
\]
Clearly, for any $q_1,\ldots, q_4\in [1,\infty]$,
\[
\int_{B_i}\frac {\psi_{jk} }\logeps (j_*^t)_k \cdot (\nabla F)_j\, dx
\le  \frac C \logeps \sum_{m=1}^4 \| j_* \|_{q_m} \|\nabla F_m \|_{q_m'}
\]
where $\frac 1{q_m} + \frac 1{q_m'}=1$.
Using elliptic estimates and Sobolev embedding theorems, and taking
$q_1=\frac 43$,
\[
\frac 1\logeps  \| j_* \|_{\frac 4 3} \|\nabla F_1 \|_{4}
\le 
\frac{C}\logeps \| j_* \|_{\frac 4 3} \| A_1 \|_{2}
\ \le 
\ C
\eps \logeps^{\frac 3 2} (\Sigma^0 + g(r_a^t)) \le C(T-t) (\Sigma^0 + g(r_a^t)).
\]
%\[ \frac 1\logeps  \| j_* \|_{q_1} \|\nabla F_1 \|_{q_1'}\le   \| j_* \|_{q_1} \| A_1 \|_{2} \ \le \ C \eps \logeps^{2- \frac 2{q_1}} (\Sigma^0 + g(r_a^t)).\]
The last inequality follows from the choice of $T$ and  \eqref{C0gtr1}, which imply in
particular that $T-t \ge \eps\logeps^2$.
%(The choice $q_1 = 4/3$ above is just for concreteness; any $q_1\in (1,2)$ would suffice for our purposes.)
%(Since we are interested in $T-t \gtrsim \eps \logeps^2$, one should think of the right-hand side as $(T-t)\logeps^{-3/2}(\Sigma^0+ g(r_a^t))$.)
Similarly, taking $q_4 = 4/3$,
\[
\frac 1\logeps  \| j_* \|_{\frac 4 3} \|\nabla F_4 \|_{4}
\le 
\frac C{\logeps^{\frac 3 2}} \| A_4 \|_{2}
\ \le 
C\frac{(T-t)} \logeps (\Sigma^0+ g(r^t_a))^{1/2}%\ \frac 1{\logeps^2} + \frac{ \| A_4\|_2^2}{\logeps}
\le  C(T-t)(\Sigma^0 + g(r_a^t)),
\]
since $\logeps^{-1} \le g(r^t_a)$.
For any $q_2\in (1,2)$, taking $p_2<2$ such that $p_2^* = q_2'$, so that $\frac 1{p_2} = \frac 32 - \frac 1{q_2}$, 
we find from our estimate of $A_2$ that
\[
\frac 1\logeps  \| j_* \|_{q_2} \|\nabla F_2 \|_{q_2'}
\le 
\frac C\logeps \| j_* \|_{q_2} \| A_2 \|_{p_2}
\ \le 
C(T-t) \logeps^{-2} \le C(T-t)g(r^t_a).
\]
And, recalling by Stampacchia's estimate that for any $p\in [1,2)$ there exists $C_p$ such that
$\| \nabla F_3\|_p \le C_p \|A_3\|_1$, we compute, 
choosing  $q_3=3$ for concreteness,
\begin{align*}
\frac 1\logeps  \| j_* \|_{3} \|\nabla F_3 \|_{\frac 3 2}
\le 
 \| j_* \|_{3} \| A_3 \|_{1}
&\ \le C  (T-t) (r_\xi^t)^{-\frac 13 } \eps \logeps (\Sigma^0 + g(r^t_a))
%\ \le C_{q_3}  (T-t) (r_\xi^t)^{\frac 2{q_3} -1 } \eps \logeps (\Sigma^0 + g(r^t_a))
\\
&\le   (T-t)  ( \eps \logeps)^{2/3}  (\Sigma^0 + g(r^t_a))
\end{align*}
again using the fact that $r_\xi^t \ge \eps\logeps$ for all $t$, see \eqref{C0gtr1}.
Combining the above, we find that
for every $i\in \{1,\ldots,l\}$ and  $0<\eps<\eps_0$ with $\eps_0$   sufficiently small, 
\begin{align}
\int_{B_i}\frac {\psi_{jk} }\logeps (j_*^t)_k \cdot (\nabla F)_j\, dx
&\le 
C(T-t)  (\Sigma^0+ g(r_a^t)).  %+\eps \logeps^{- \frac 12} 
%\int_t^T (\Sigma^0 + g(r_a^s)) \ ds
%\nonumber\\
%&\quad\quad\quad+ C \eps \logeps^{1/2}(\Sigma^0+ g(r^t_a)).
\label{Fterms}
\end{align}

{\bf 8}. Next, 
\[
\int_{B_i}\frac {\psi_{jk} }\logeps (j_*^t)_k \cdot \frac{(\nabla^\perp G)_j}{\eta^2}\, dx
=
\int_{B_i}\frac {\psi_{jk} }{\eta^2\logeps} (j_*^t)_k \cdot \nabla^\perp (G_1+G_2+G_3)_j \, dx
\]
for $G_m$ solving
\[
-\nabla\cdot ( \frac{ \nabla G_m}{\eta^2}) = A_m'
\quad \mbox{ in }B(a_i(t), \frac 34 \rho_{min}),
\quad\quad\quad
g = 0
\mbox{ on }\partial B(a_i(t), \frac 34 \rho_{min}),
\]
with
\begin{align*}
A_1'
&:= 
\int_t^T \tilde \chi^i \nabla\times(j(v) - j_*^s) \ ds,\\
A_2'
& := 
\int_t^T \nabla^\perp \tilde \chi^i  \cdot j(v)(1 - \frac 1{|v|}) \ ds,\\
A_3'
&:= 
\int_t^T \nabla^\perp \tilde \chi^i  \cdot (\frac{j(v)}{|v|} - j_*^s) \ ds.
\end{align*}
The terms containing $G_2$ and $G_3$ are estimated exactly as the terms containing
$F_3$ and $F_4$ in Step 7 above, leading to
\[
\int_{B_i}\frac {\psi_{jk} }{\eta^2\logeps} (j_*^t)_k \  \nabla^\perp (G_2+G_3)_j\, dx
\
\le \ C(T-t)(\Sigma^0+ g(r_a^t)) . %\eps \logeps^{- \frac 12} 
\]
For the remaining term, we invoke the interpolation inequality
\begin{equation}
\| A_1'\|_{W^{-1,p}} \le C \| A_1'\|_{W^{-1,1}}^\theta \| A_1'\|_{L^1}^{1-\theta}  \label{negative.interp}
\end{equation}
 for $p\in (1,2)$ and $\theta$ such that 
 $\frac 1p = \frac \theta 1 + \frac {1-\theta}2$ (see e.g. \cite{Triebel} Theorem 2.4.1 combined with Sobolev embedding theorem).
%This follows for example from the
%Sobolev embedding theorem and
%a similar interpolation inequality for 
%$(C^{0,\alpha}_0)^*$ spaces, see for example
%\cite{JeSo},  Lemma 3.3. \blue{Is there a better reference?}
To estimate the $W^{-1,1}$ norm, we fix $\zeta \in C^\infty_c(\Omega)$, and we compute
\begin{align*}
\langle \zeta, A_1'\rangle
= 
\int_t^T\langle \tilde \chi^i \zeta ,  \nabla\times (j(v) - j_*^s) \rangle
&\le
\int_t^T \| \tilde \chi^i \zeta \|_{W^{1,\infty}} \| \nabla\times (j(v) - j_*^s) \|_{W^{-1,1}} \ ds
\\
&\le C(T-t) r_\xi^t \|\zeta\|_{W^{1,\infty}}  \ 
\end{align*}
using \eqref{eq:JminusJstar} and \eqref{rsxi}.
Thus
\begin{equation}\label{A1primeflat}
\| A_1'\|_{W^{-1,1}} \le C(T-t) r_\xi^t .
\end{equation}
Also, for every $s\in [t,T]$,
\[
\| \nabla \times (j(v)- j_*^s) \|_{L^1} 
\le 
\| 2 Jv \|_{L^1} + \| \nabla\times j_*^s\|_{L^1}
\le 
C E_{\eps,\eta}(v) + 2\pi l.
\]
Estimating $E_{\eps,\eta}$ as usual by $C\logeps(\Sigma^0 + g(r_a^t))$, integrating the last inequality from $t$ to $T$, and combining it with \eqref{A1primeflat} and \eqref{negative.interp},
we obtain
\[
\|A_1'\|_{W^{-1,p}} \le C (T-t)(r_\xi^t)^\theta \left( C\logeps(\Sigma^0 + g(r_a^t))\right)^{1-\theta}.
\]
Then using H\"older's inequality and (again) the fact that $\| j_*^s\|_{p'} \le C (r_\xi^s)^{ \frac 2{p'}-1}$ for $p'>2$,
\begin{align*}
\int_{B_i}\frac {\psi_{jk} }{\eta^2 \logeps} (j_*^t)_k \cdot \nabla^\perp (G_1)_j\, dx
&\le 
\frac C\logeps (T-t) (r_\xi^t)^{\theta + \frac 2 {p'}-1}\left( C\logeps(\Sigma^0 + g(r_a^t))\right)^{1-\theta}
\nonumber\\
&\le
C(T-t) \logeps^{-\theta} \left( \Sigma^0 + g(r_a^t)\right)^{1-\theta}
\nonumber\\
&\le
C(T-t)  \left( \Sigma^0 + g(r_a^t)\right),
\label{G1est}
\end{align*}
since it turns out that $\theta+ \frac 2{p'}-1 = 0$, and noting that $\logeps^{-1} \le g(r_a^t)$ for all $t$.
Assembling these estimates, we find that
\[
\int_{B_i}\frac {\psi_{jk} }\logeps (j_*^t)_k (\nabla^\perp G)_j\, dx
\le
C(T-t)  \left( \Sigma^0 + g(r_a^t)\right). % + \int_t^T (\Sigma^0+ g(r^a_s)) \ ds.
\]
Now by combining this with \eqref{eq:halfway}, \eqref{hardterm1}, \eqref{hardterm1a}, \eqref{hardterm3}, \eqref{Fterms}, we finally obtain
\begin{equation}
\begin{aligned}
r^T_a - r^t_a \le 
C(T-t)  \left( \Sigma^0 + g(r_a^t)\right) .
%+ \int_t^T (\Sigma^0+ g(r^a_s)) \ ds + Cr^t_\xi .
\end{aligned}
\label{main.est}\end{equation}
%%%%%%%%%%%%%%%%%%%
%%%%%%%%%%%%%%%%%%%
\end{proof}
%%%%%%%%%%%%%%%%%%%
%%%%%%%%%%%%%%%%%%%

\section{Proof of Theorem \ref{thm:main}}

Our main result is a straightforward corollary of the discrepancy estimate
proved in the previous section.

\begin{proof}[Proof of Theorem \ref{thm:main}]

Let $Y$ denote the solution of the ordinary differential equation
$$
\dot Y(t) = C_0\Big(\Sigma^0 + g(Y(t))\Big),\qquad Y(0) = r_a^0, 
$$
where $g$ is the function defined in \eqref{eq:defg}, and   
let $\{ Y_n\}_{n=0}^\infty$ be a discrete approximation to $Y(\cdot)$ 
obtained via an Euler approximation implicit in the statement
of Proposition \ref{prop:discrineq}.  Thus, we define
\[
Y_0= r^0_a, \qquad Y_{n+1} = Y_n +(t_{n+1} - t_{n}) \,C_0 \left(\Sigma^0 + g(Y_n)\right),
\]
\[
t_{n+1}  := t_{n} + \frac{(r^{n}_\xi)^2}\eps
\]
where 
\[
r^n_\xi :=  r_\xi(\Sigma^0, Y_n) = \eps \exp( C_0(\Sigma^0+g(Y^n)))\logeps).
\]
Since the function $f(Y):= C_0 \left(\Sigma^0 + g(Y)\right) $ is convex, 
a forward Euler approximation to the solution of the equation $Y' = f(Y)$ is
always less than or equal to the actual solution, and it follows that
$
Y_n \le Y(t_n)$ for all $t$.
Then repeated application of Proposition \ref{prop:discrineq} shows that
\[
r_a^{t_n} \le Y_n \le Y(t_n)
\mbox{ for every $n$ such that $t_n \le T_{col}$ and $\Sigma^0 + g(Y_n) \le  \frac 1{4C_1}$.}
\]
Given an arbitrary $t\in (0,T_{col}]$ such that $\Sigma^0+ g(Y(t))\le \frac 1{4C_1}$,
there exists some $n$ such that $t\in [t_n, t_{n+1}]$ and $r_a^{t_n} \le Y(t_n)$.
Then by Proposition \ref{prop:jacspeed}, see in particular \eqref{eq:rxis}, as well as \eqref{rsxi},
\begin{equation}\label{eq:parotide0}
r_a^t \le r_a^{t_n} + C\Big(  ( t_{n+1}-t_n) + r_\xi^{t_n}\Big)   \le Y(t) + C\eps^{1/2},
\end{equation}
since the bound $\Sigma^0+ g(Y(t_n))\le \frac 1{4C_1}$  guarantees that $r_\xi^{t_n}\le \eps^{3/4}$
and hence that $ t_{n+1}-t_n \le \eps^{1/2}.$ It remains to bound the function $Y$ from above. For that purpose, 
we notice that since $g(y) \leq y + \log\logeps/\logeps$ for every $y\geq 0$, we have $Y(t) \leq \tilde Y(t)$ where $\tilde Y$ is the solution of the ordinary differential equation 
$$\dot{ \tilde Y}(t) = C_0\big( \Sigma^0 + \frac{\log\logeps}{\logeps} + \tilde Y(t)\big),\qquad \tilde Y(0) = r_a^0.$$
The solution of the latter is explicitly given by
$$
\tilde Y(t) = r_a^0 + \big( \Sigma^0 + r_a^0+ \frac{\log\logeps}{\logeps} \big)\big( e^{C_0t}-1\big),
$$
and the conclusion therefore follows from \eqref{eq:parotide0}, increasing the value of $C_0$ to the value of 
$C$ in \eqref{eq:parotide0} if necessary. 
\end{proof}

%\appendix

%\numberwithin{lem}{section}
%\numberwithin{cor}{section}
%\numberwithin{prop}{section}
%\numberwithin{equation}{section}

%%%%%%%%%%%%%%%%%%%%%%%%%%%%%%%%%%%%%%%%%%%%%%%%%%%%%%%%%%%%%%%%%%%%%%%%%%%%%%%%%%%%%%%%%%%%%%%%%%%%%%%%%%%%%%%%
\section{Some properties of the ground state}\label{sect:groundstate}
%%%%%%%%%%%%%%%%%%%%%%%%%%%%%%%%%%%%%%%%%%%%%%%%%%%%%%%%%%%%%%%%%%%%%%%%%%%%%%%%%%%%%%%%%%%%%%%%%%%%%%%%%%%%%%%%

In this section we briefly recall some facts about  minimizers of the functional\footnote{Note that we make no restriction on the dimension $N$ here.}
\begin{equation}\label{eq:EepsV}
\boE_{\eps,V}(u) = \int_{\R^N} \frac{ |\nabla u|^2}2 + \frac 1{2\eps^2}\left( V(x) {|u|^2} + \frac12 {|u|^4}\right)dx
\end{equation}
in the space 
\begin{equation}\label{eq:constraint}
\mathcal{H}_m := \{ u\in H^1(\R^N;\C) : \int_{\R^N} V |u|^2 < \infty, \int_{\R^N} |u|^2 = m \}
\end{equation}
where $V:\R^N\to [0,\infty)$ is a smooth function such that $V(x)\to \infty$ as $|x|\to \infty$, and 
$m>0$ is a parameter.

For every positive $\eps,m$, the existence of a function $\eta_{\eps, m}:\R^N\to (0,\infty)$ minimizing $\boE_{\eps,V}$ in $\mathcal{H}_m$ is standard, and follows easily from the growth of $V$ (which implies that the $L^2$
constraint is preserved for weak limits of sequences with equi-bounded energy) together with the strong maximum principle and the fact that 
$\boE_{\eps,V}(|u|)  \le \boE_{\eps,V}(u)$ for all $u$. 

In the introduction, we already introduced the unique number $\lambda_0$ such that
\[
\int_{\R^N} (\lambda_0-V)^+ dx = m, 
\]
and we have denoted by $\rhotf := (\lambda_0 - V)^+$ the Thomas-Fermi profile associated to $V$ and $m.$  We also note  $w := (\lambda_0-V)^-$. We will prove

\begin{prop}\label{prop:groundstate}
Let $\eta = \eta_{\eps. m} \in \mathcal{H}_m$ be a positive 
minimizer of $\boE_{\eps, V}$ in $\mathcal{H}_m$.

Then
\begin{equation}
\| \eta^2 - \rhotf \|_{L^2(\R^N)} \le C \eps^{2/3}.
\label{eq:L2eta}\end{equation}
Moreover, for any $K\subset\subset \Otf := \{ x\in \R^N : \rhotf(x)>0\}$,
there exists a constant $C = C(m,V,K)$ such that
\begin{equation}
\| \eta^2 - \rhotf \|_{L^\infty(K)} \le C \eps^{2/3},
\quad\quad
\|\nabla \eta^2  \|_{L^\infty(K)} \le C.
\label{eq:unifLipschitz}\end{equation}
\end{prop}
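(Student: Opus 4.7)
The plan has three stages: (i) establish the global $L^2$ bound via an energy comparison with a regularized Thomas--Fermi profile, (ii) control the Lagrange multiplier difference $\lambda_{\eps,m}-\lambda_0$ at the same $\eps^{2/3}$ rate, and (iii) propagate the control to local $L^\infty$ and gradient estimates on $K\subset\subset\Otf$ via the Euler--Lagrange equation.

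For stage (i), introduce the Thomas--Fermi functional $\boE_{TF}(\rho):=\int V\rho+\tfrac12\rho^2$, and observe that $2\eps^2\boE_{\eps,V}(u)=\eps^2\|\nabla u\|_2^2+\boE_{TF}(|u|^2)$. Using $V+\rhotf=\lambda_0\chi_{\Otf}$ together with $\int\eta^2=\int\rhotf=m$, a direct algebraic manipulation yields the coercivity
\[
\boE_{TF}(\eta^2)-\boE_{TF}(\rhotf)=\tfrac12\|\eta^2-\rhotf\|_{L^2}^2+\int_{\Otf^c}(V-\lambda_0)\eta^2\ \ge\ \tfrac12\|\eta^2-\rhotf\|_{L^2}^2,
\]
both right-hand terms being nonnegative. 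For a matching upper bound, take as trial function $u_\delta = c_\delta\sqrt{(\rhotf-\delta)^+}$ with $c_\delta\to 1$ ensuring $\int u_\delta^2=m$; elementary computations based on the coarea formula (which absorbs possible irregularity of $\partial\Otf$) give $\boE_{TF}(u_\delta^2)\le\boE_{TF}(\rhotf)+C\delta^2$ and $\int|\nabla u_\delta|^2\le C/\delta$. Minimality of $\eta$ then gives $\|\eta^2-\rhotf\|_{L^2}^2\le C(\delta^2+\eps^2/\delta)$, and optimizing at $\delta\sim\eps^{2/3}$ yields \eqref{eq:L2eta}; the same comparison simultaneously provides $\eps^2\|\nabla\eta\|_2^2\le C\eps^{4/3}$.

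For stage (ii), test the Euler--Lagrange equation against $\eta$ and subtract the analogous identity $\lambda_0 m=\int V\rhotf+\rhotf^2$ for the Thomas--Fermi profile to obtain
\[
(\lambda_{\eps,m}-\lambda_0)m=\eps^2\|\nabla\eta\|_2^2+\int(V+\eta^2+\rhotf)(\eta^2-\rhotf).
\]
The first term is $O(\eps^{4/3})$ by (i). A global $L^\infty$ bound $\|\eta\|_\infty\le C$ follows from the maximum principle applied to the EL equation, after an a priori bound on $\lambda_{\eps,m}$ deduced from the energy identity $\lambda_{\eps,m}m = 2\eps^2\boE_{\eps,V}(\eta)+\tfrac12\int\eta^4$. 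Splitting the integral above between $\Otf$ (where $V+\eta^2+\rhotf=\lambda_0+\eta^2$ is uniformly bounded) and $\Otf^c$ (where $\rhotf=0$), and using the mass constraint together with the coercivity identity of (i) to control $\int_{\Otf^c}V\eta^2$ and $\int_{\Otf^c}\eta^4$ by $O(\eps^{2/3})$, bounds the whole integral by $O(\eps^{2/3})$, hence $|\lambda_{\eps,m}-\lambda_0|\le C\eps^{2/3}$.

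For stage (iii), on $\Otf$ the EL equation rearranges to the pointwise identity $\eta^2-\rhotf=(\lambda_{\eps,m}-\lambda_0)+\eps^2\Delta\eta/\eta$. A sub/supersolution argument on a neighborhood of $K$ (comparing $\eta^2$ with $(\lambda_{\eps,m}-V\pm C\eps^{2/3})^+$) yields a positive lower bound $\eta\ge c(K)>0$; combined with the global $L^\infty$ bound, $\eta(\eta^2+V-\lambda_{\eps,m})$ is then uniformly bounded and so $\eps^2\|\Delta\eta\|_\infty\le C$, which plugged into the identity gives $\|\eta^2-\rhotf\|_{L^\infty(K)}\le|\lambda_{\eps,m}-\lambda_0|+C\eps^2\le C\eps^{2/3}$. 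The gradient bound is the main obstacle: a naïve elliptic estimate based on $\eps^2\Delta\eta=O(1)$ only yields $\|\nabla\eta\|_\infty=O(\eps^{-1})$ by interpolation, which is too weak. The fix is to exploit that on $K'\subset\subset\Otf$ the linearization of the EL equation around $\sqrt{\rhotf}$ is $-\eps^2\Delta+2\rhotf$, which is uniformly coercive since $\rhotf\ge c(K')>0$; interior Schauder-type estimates applied to $\eta-\sqrt{\rhotf}$ together with the $L^\infty$ control already obtained then yield $\|\nabla\eta\|_{L^\infty(K)}\le C$, and hence $\|\nabla\eta^2\|_{L^\infty(K)}\le C$.
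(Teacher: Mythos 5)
Your overall architecture (energy comparison for the $L^2$ bound, control of the Lagrange multiplier, then local elliptic analysis on $K$) is the same as the paper's, and both your coercivity identity in stage (i) and your multiplier identity in stage (ii) are correct. However, there are two genuine gaps.

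First, the trial function in stage (i) does not do what you claim. For $u_\delta=c_\delta\sqrt{(\rhotf-\delta)^+}$ the coarea formula gives
\[
\int|\nabla u_\delta|^2=\frac{c_\delta^2}{4}\int_{\{\rhotf>\delta\}}\frac{|\nabla\rhotf|^2}{\rhotf-\delta}\,dx=\frac{c_\delta^2}{4}\int_\delta^{\lambda_0}\frac{1}{s-\delta}\Bigl(\int_{\{\rhotf=s\}}|\nabla\rhotf|\,d\mathcal{H}^{N-1}\Bigr)\,ds,
\]
and since the inner integral does not tend to $0$ as $s\to\delta^+$ in general (take $V(x)=|x|^2$), this diverges logarithmically: near the free boundary $\{\rhotf=\delta\}$ your trial function behaves like $\sqrt{\operatorname{dist}}$, whose gradient is not square-integrable up to a hypersurface. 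So the claimed bound $\int|\nabla u_\delta|^2\le C/\delta$ is false — the Dirichlet energy is infinite. The fix is to regularize the square root near zero rather than shift the profile downward: the paper takes $U_\eps=c_\eps f_\eps(\sqrt{\rhotf})$ with $f_\eps(s)=\eps^{-\alpha}s^2$ for $s\le\eps^\alpha$, which is Lipschitz across $\partial\Otf$ and yields $\boE_{\eps,\rhotf}(U_\eps)\le C\eps^{-2/3}$ for $\alpha=2/3$. As written, stages (i) and (ii), as well as the a priori $L^\infty$ bound (which relies on the energy upper bound), all rest on this broken estimate.

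Second, stage (iii) contains a non sequitur at the decisive point. From $\|\eta\|_\infty\le C$ and $\eta\ge c(K)$ you correctly obtain $\eps^2\|\Delta\eta\|_\infty\le C$, i.e.\ $\|\Delta\eta\|_\infty\le C\eps^{-2}$; plugging this into $\eta^2-\rhotf=(\lambda_{\eps,m}-\lambda_0)+\eps^2\Delta\eta/\eta$ gives only $|\eta^2-\rhotf|\le|\lambda_{\eps,m}-\lambda_0|+O(1)$, not $+C\eps^2$. To run your argument one needs $\|\Delta\eta\|_{L^\infty(K)}\le C$ uniformly in $\eps$, which is equivalent to $|\eta^2-(\lambda_{\eps,m}-V)|\le C\eps^2$ on $K$ — essentially the conclusion. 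This is exactly where the paper does its real work: setting $\theta=\eta-\sqrt{\rho_{TF,\eps}}$ with $\rho_{TF,\eps}=(\lambda_{\eps,m}-V)^+$, the linearized operator $-\Delta+a_\eps$ has $a_\eps\ge\alpha^2/\eps^2$ on interior balls (thanks to the lower bound $\eta\ge\alpha$), and an explicit supersolution of the form $C\eps^2+C\exp(-c\operatorname{dist}/\eps)$ yields $|\theta|\le C\eps^2$ in the interior; only then is $b_\eps:=\eps^{-2}(\eta^2-\rho_{TF,\eps})$ uniformly bounded, so that standard elliptic estimates for $-\Delta\eta+b_\eps\eta=0$ give $\|\nabla\eta\|_{L^\infty(K)}\le C$. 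Your alternative (``interior Schauder for $-\eps^2\Delta+2\rhotf$'') does not supply this: interior estimates for that operator at its natural scale $\eps$ cost a factor $\eps^{-1}$ per derivative, so with only $\|\eta-\sqrt{\rhotf}\|_\infty=O(\eps^{2/3})$ in hand you would get $\|\nabla\eta\|_\infty=O(\eps^{-1/3})$ at best. The quantitative interior barrier producing the $O(\eps^2)$ closeness is the missing ingredient.
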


This is quite standard, and is proved for particular potentials $V$ in \cite{IM1} for
example. We include a complete proof, since the references we know all impose
slightly more restrictive conditions than we consider here (for example, symmetry conditions,
or the assumption that $\lambda_0$ is a regular value of $V$). 

\begin{proof}
It suffices to prove the result for $\eps \le\eps_0$, for some $\eps_0>0$.

{\bf 1}. First, as is standard, for $u\in \mathcal{H}_m$ we rewrite
\begin{align*}
\boE_{\eps,V}(u) 
&= 
\int_{\R^N} \Big[\frac {|\nabla u|^2}2 + \frac 1{4\eps^2}(|u|^2-\rhotf)^2  + \frac 1{2\eps^2} w|u|^2\Big]\ dx
\  + \ \frac 1{\eps^2}(\lambda_0 \frac m2 - \frac 14 \int_{\R^N} \rhotf^2)\\
&=:
\boE_{\eps, \rhotf}(u) + C_1(\eps, m).
\end{align*}
Thus, it is clear that a function minimizes $\boE_{\eps,V}$ in $\mathcal{H}_m$ if and only if it minimizes $\boE_{\eps,\rhotf}$ in $\mathcal{H}_m$. 

{\bf 2}.  Next we claim that 
\begin{equation} 
\inf_{\mathcal{H}_m}\boE_{\eps, \rhotf} \le  C \eps^{-2/3}.
\label{eq:upper}\end{equation}
Note that this immediately implies \eqref{eq:L2eta}.
We verify \eqref{eq:upper} by choosing $U_\eps :=  c_\eps f_\eps(\sqrt \rhotf)$, where
\[
f_\eps(s) = 
\begin{cases}\eps^{-\alpha}s^2&\mbox{ if }s\le \eps^\alpha\\
s&\mbox{ if }s\ge\eps^\alpha,
\end{cases}
\]
where $c_\eps$ is chosen so that $U_\eps\in \mathcal{H}_m$. 
Then straightforward estimates very much like those in \cite{IM1}, for example, show that 
$\boE_{\eps,\eta}(U_\eps) \le C (\eps^{-\alpha} + \eps^{2\alpha-2})$, and \eqref{eq:upper} follows by
taking $\alpha = 2/3$. (This  crude estimate has the advantage of holding for {\em every} $m >0$,  so that we do not require $\lambda_0$ to be a regular value of $V$. If $\lambda_0$ is a regular value, then a variant of the same construction shows that  $\inf_{\mathcal{H}_m} \boE_{\eps, \rhotf} \le  C \logeps$.)

{\bf 3}. Since $V- \lambda_0 = (V-\lambda_0)^+ - (V- \lambda_0 )^- = w-\rhotf$, we may write the variational equation satisfied by $\eta$ in the form
\[
-\Delta \eta + \frac 1{\eps^2}( \eta^2  -  \rhotf + w)\eta = \frac 1{\eps^2} (\lambda_\eps-\lambda_0)\eta,
\]
where $\frac 1{\eps^2}\lambda_{\eps}$ is a Lagrange multiplier. Multiplying by $\eta$ and integrating, and using the fact that $\eta\in \mathcal{H}_m$,  we find   that
\[
\frac m {\eps^2}(\lambda_\eps - \lambda_0) = \int_{\R^2} |\nabla \eta|^2 + \frac 1 {\eps^2}\left[ w \eta^2 + (\eta^2-\rhotf)^2 + (\eta^2-\rhotf )\rhotf\right].
\]
It follows that 
\begin{equation}\label{eq:lambdaeps0}
\frac m{\eps^2}(\lambda_\eps - \lambda_0) \le
4  \boE_{\eps ,\rhotf}(\eta) +  
\frac 1{\eps^2}\|\rhotf\|_{L^2(\R^N)} \| \eta^2-\rhotf\|_{L^2(\R^N)}
\le C \eps^{-4/3}
\end{equation}
by \eqref{eq:upper} and \eqref{eq:L2eta}.

{\bf 4}. Now let $\rho_{\TF,\eps} :=  (\lambda_\eps - V)^+$.
It follows from \eqref{eq:lambdaeps0} that
\begin{equation}
\|\rho_{\TF,\eps} - \rhotf\|_{L^\infty(\R^N)}  = |\lambda_\eps - \lambda_0| \le C \eps^{2/3},
\label{eq:lambdaeps}\end{equation}
so that $K\subset\subset \Omega_\eps := \{ x\in \R^N : \rho_{\TF,\eps}>0\}$ if $\eps>0$ is  sufficiently small, which we henceforth take to be the case.
Note also that
\begin{equation}
-\Delta \eta + \frac 1{\eps^2}(\eta^2 - \rho_{\TF,\eps})\eta = 0
\quad\quad\mbox{ in $\Omega_\eps$.
}
\label{eq:etasol}\end{equation}
Now fix some $r \le \frac 12\operatorname{dist}(K,\partial \Otf)$. 
In view of \eqref{eq:lambdaeps}, and since $V$ is $C^2$, there exists $a,k>0$ and $\eps_0>0$ such that 
\begin{equation}\label{ab}
\rho_{\TF,\eps}>a^2 \ \mbox{ and }\ \ |\Delta \sqrt \rho_{\TF,\eps}| \le k
\mbox{ whenever }0<\eps\le \eps_0.
\end{equation}
For any $x\in K$ and $b\in (0,a)$, define 
\[
\zeta_{x,b}(y) = \zeta(y)=  b( \frac{|y-x|^2}{r^2}-1)^2
\]
in $B(x,r)$. Then for $b\in (0,\frac a2)$,
\begin{equation}\label{eq:zetasub}
-\Delta \zeta  + \frac 1{\eps^2}(\zeta^2-\rho_{\TF,\eps})\zeta 
\  \le \  -\Delta \zeta - \frac{3a^2}{4\eps^2} \zeta   \  < \ 0
\quad\quad\mbox{ in } B(x,r)
%= b \left[ N - (N+2)\frac{|y-x|^2}{r^2}  - \frac{a^2-b^2}{4\eps^2}( \frac{|y-x|^2}{r^2}-1)^2\right].
\end{equation}
whenever $\eps$ is sufficiently small. It follows that $\eta\ge \zeta_{x,b}$ in $B(x,r)$ for
every $b\in (0,\frac a2)$, as otherwise we could find some $b_0\in (0,\frac a2)$ such that $\min_{B(x,r)} (\eta-\zeta_{x,b_0}) = 0$. Since $\eta>0$, the minimum would have to be attained in the interior of $B(x,r)$, and this is impossible in view of \eqref{eq:etasol} and \eqref{eq:zetasub}.

It follows that  
\begin{equation}
\eta(y)\ge \frac{9a}{32} =:  \alpha \mbox{ in }B(x, r/2).
\label{eq:etalbd}\end{equation}
Note also that $\| \eta \|_{L^\infty(\R^N)} \le \| \sqrt{\rho_{\TF,\eps}}\|_{L^\infty(\R^N)}$, since otherwise $\tilde\eta  :=\min( \eta ,  \| \sqrt{ \rho_{\TF,\eps}}\|_\infty)$
would  satisfy $\boE_{\eps,\rhotf}(\tilde \eta) < \boE_{\eps,\rhotf}(\eta)$, contradicting the minimality of $\eta$.

{\bf 5}. Now write $\theta :=\eta-\sqrt{\rho_{\TF,\eps}}$. Then
\[
- \Delta \theta + a_\eps(x) \theta = \Delta \sqrt {\rho_{\TF,\eps}}
\quad\quad\mbox{ for }a_\eps(x) = \frac 1{\eps^2}(\theta + 2\sqrt{\rho_{\TF,\eps}})(\theta+\sqrt{\rho_{\TF,\eps}}) \  \overset{\eqref{eq:etalbd}}\ge  \ \frac {\alpha^2}{\eps^2}
\]
in $B(x, r/2)$, and $|\theta|\le  2 \|\sqrt\rho_{\TF,\eps}\|_{L^\infty(\R^N)}$ 
on $B(x, r/2)$. Now for $y\in B(x,r/2)$
define
\[
\Theta_\eps (y) := 
\frac k{\alpha^2} \eps^2 + 2 \|\sqrt{\rho_{\TF,\eps}}\|_{L^\infty(\R^N)}
\exp\left[ \frac{\alpha}{r\eps}( \frac{|y-x|^2}2- \frac {r^2} 8 )\right]
\]
where $k$ is the bound for $\|\Delta \sqrt {\rho_{\TF,\eps}}\|_\infty$ 
found in \eqref{ab}.
Then $\Theta \ge \theta$ on $\partial B(x,r/2)$, and there exists $\eps_0>0$ such that 
\[
(-\Delta + a_\eps )\Theta \ \ge \ k \  \ge\  (-\Delta + a_\eps)\theta \mbox{\ \  in \,  $B(x,r/2)$,\qquad 
if $0<\eps< \eps_0$.}
\]
It follows that
$\Theta \ge \theta$ in $B(x, r/2)$, and similarly $-\Theta \ge -\theta$ in $B(x, r/2)$.
Thus
\begin{equation}\label{eq:theta}
|\eta - \sqrt {\rho_\eps}| \le C \eps^2 \quad\mbox{ on }B(x, r/4).
\end{equation}

{\bf 6}.  Returning to \eqref{eq:etasol}, we see that 
\[
-\Delta \eta  + b_\eps \eta = 0 \quad\mbox{ in }B(x, r/4), \mbox{ for }b_\eps = \frac 1{\eps^2}(\eta^2-\rho_\eps),
\]
and \eqref{eq:theta} implies that $\|b_\eps\|_{L^\infty(B(x, r/4)} \le C$ independent of $\eps\in (0,\eps_0)$ and $x\in K$. Since we already know that $\|\eta\|_{L^\infty(\R^N)} \le C$, we conclude from standard elliptic regularity  that $\| \nabla \eta\|_{L^\infty(B(x, r/8))} \le C $. Also, it follows
from \eqref{eq:lambdaeps} and \eqref{eq:theta} that 
$\| \eta^2 - \rho \|_{L^\infty(K)} \le C \eps^{2/3}$, 
so we have proved \eqref{eq:unifLipschitz}.
\end{proof}

%%%%%%%%%%%%%%%%%%%%%%%%%%%%%%%%%%%%%%%%%%%%%%%%%%%%%%%%%%%%%%%%%%%
\section{Proof of Theorem \ref{thm:limit}}
%%%%%%%%%%%%%%%%%%%%%%%%%%%%%%%%%%%%%%%%%%%%%%%%%%%%%%%%%%%%%%%%%%%

In view of \eqref{eq:parotide2}, Theorem \ref{thm:limit} is a direct consequence of Theorem \ref{thm:main} combined with Proposition \ref{prop:groundstate} and the continuity of the solution of an initial value problem with respect to the nonlinearity.

\medskip

\noindent{\bf Addresses and E-mails: }

\smallskip
\noindent
Robert Jerrard. Department of Mathematics, University of Toronto, Toronto, Ontario M5S 2E4, Canada. E-mail:
{\tt rjerrard@math.utoronto.ca}

\smallskip

\noindent
Didier Smets. Laboratoire Jacques-Louis Lions, Universit\'e Pierre \& Marie Curie, 4 place Jussieu BC 187, 75252
Paris Cedex 05, France. E-mail: {\tt smets@ann.jussieu.fr}
 \end{document}